
\documentclass[reqno, 11pt, letterpaper, oneside]{article}


\usepackage{amsmath,amsthm,amssymb} %
\usepackage{geometry}
\usepackage[final]{graphicx}
\usepackage{setspace}


\newtheorem{theorem}{Theorem}[section]
\newtheorem{lemma}[theorem]{Lemma}
\newtheorem{corollary}[theorem]{Corollary}

\newcommand{\cA}{{\mathcal A}}
\newcommand{\cB}{{\mathcal B}}
\newcommand{\cC}{{\mathcal C}}
\newcommand{\cD}{{\mathcal D}}
\newcommand{\cE}{{\mathcal E}}
\newcommand{\cF}{{\mathcal F}}
\newcommand{\cG}{{\mathcal G}}
\newcommand{\cH}{{\mathcal H}}
\newcommand{\cI}{{\mathcal I}}
\newcommand{\cJ}{{\mathcal J}}
\newcommand{\cK}{{\mathcal K}}
\newcommand{\cL}{{\mathcal L}}
\newcommand{\cN}{{\mathcal N}}
\newcommand{\cM}{{\mathcal M}}
\newcommand{\cP}{{\mathcal P}}
\newcommand{\cQ}{{\mathcal Q}}
\newcommand{\cR}{{\mathcal R}}
\newcommand{\cS}{{\mathcal S}}
\newcommand{\cU}{{\mathcal U}}
\newcommand{\cT}{{\mathcal T}}
\newcommand{\cV}{{\mathcal V}}
\newcommand{\cW}{{\mathcal W}}
\newcommand{\cX}{{\mathcal X}}
\newcommand{\cY}{{\mathcal Y}}

\newcommand{\EE}{{\mathbb E}}
\newcommand{\PP}{{\mathbb P}}

\newcommand{\badE}[1][i]{\cB_{{#1}}(\Sigma)}
\newcommand{\badEC}[1][i]{\cB_{{#1}}(\Sigma^*)}
\newcommand{\badEL}[1][i]{\cB_{\leq {#1}}(\Sigma)}

\newcommand\marginal[1]{\marginpar{\raggedright\parindent=0pt\tiny #1}}
\renewcommand\marginal[1]{}

\renewcommand{\epsilon}{\varepsilon}
\newcommand\arxiv[1]{\texttt{\def~{{\tiny$\sim$}}{arXiv:#1}}} 


\geometry{
  hmargin={25mm, 25mm}, 
  vmargin={25mm, 25mm},
  headsep=10mm,
  headheight=5mm,
  footskip=10mm
}
\setlength{\parskip}{1ex}
\setlength{\parindent}{0pt}


\begin{document}

\begin{center}

{\LARGE The $C_{\ell}$-free process}

\vspace{3mm}

{\large Lutz Warnke} 
\vspace{1mm}

{ Mathematical Institute,  University of Oxford\\
 24--29 St.~Giles', Oxford OX1 3LB, UK\\
 {\small\tt warnke@maths.ox.ac.uk}}

\vspace{6mm}

\small
\begin{minipage}{0.8\linewidth}
\textsc{Abstract.} 
The $C_{\ell}$-free process starts with the empty graph on $n$ vertices and adds edges chosen uniformly at random, one at a time, subject to the condition that no copy of $C_{\ell}$ is created. 
For every $\ell \geq 4$ we show that, with high probability as $n \to \infty$, the maximum degree is $O( (n \log n)^{1/(\ell-1)})$, which confirms a conjecture of Bohman and Keevash and improves on bounds of Osthus and Taraz.  
Combined with previous results this implies that the $C_{\ell}$-free process typically terminates with $\Theta(n^{\ell/(\ell-1)}(\log n)^{1/(\ell-1)})$ edges, which answers a question of Erd{\H o}s, Suen and Winkler. 
This is the first result that determines the final number of edges of the more general $H$-free process for a non-trivial \emph{class} of graphs $H$. 
We also verify a conjecture of Osthus and Taraz concerning the average degree, and obtain a new lower bound on the independence number. 
Our proof combines the differential equation method with a tool that might be of independent interest:  
we establish a rigorous way to `transfer' certain decreasing properties from the binomial random graph to the $H$-free process. 
\end{minipage}
\end{center} 
\normalsize

\section{Introduction} 
The \emph{random graph process} was introduced by Erd{\H{o}}s and R{\'e}nyi~\cite{ErdosRenyi1959} in 1959.  
It starts with the empty graph on $n$ vertices and adds new edges one by one, where each edge is chosen uniformly at random among all edges not yet present. 
Since then it has been studied extensively, and many tools and methods for investigating its typical properties have been developed, see e.g.~\cite{Bollobas2001RandomGraphs,Durrett2007,JLR2000RandomGraphs}. 
In this work we consider a natural variant of the above process which has very recently received a considerable amount of attention~\cite{Bohman2009K3, BohmanKeevash2010H, GerkeMakai2010K3, Picollelli2010K4Minus, Picollelli2010C4, Warnke2010H, Warnke2010K4, Wolfovitz2009H, Wolfovitz2010K4, Wolfovitz2009K3Subgraph}.

The \emph{$H$-free process} was suggested by Bollob{\'a}s and Erd{\H o}s~\cite{Bollobas2010PC} in 1990, as a way to generate an interesting probability distribution on the set of maximal $H$-free graphs with potential applications to Ramsey Theory. 
Given some fixed \mbox{graph $H$}, it is a modification of the classical random graph process, where each new edge is chosen uniformly at random subject to the condition that no copy of $H$ is formed. 
It was first described in print in 1995 by Erd{\H o}s, Suen and Winkler~\cite{ErdoesSuenWinkler1995}, who asked how many edges the final graph typically has (this also appears as a problem in~\cite{ChungGraham98}). 
The main difficulty when analysing this process  is that there is a complicated dependence among the edges; the order in which they are inserted is also relevant.

The first results addressed certain special graphs, determining the typical final number of edges up to logarithmic factors. 
The case $H=C_3$ was studied in 1995 by Erd{\H o}s, Suen and Winkler~\cite{ErdoesSuenWinkler1995}, and in 2000 Bollob{\'a}s and Riordan~\cite{BollobasRiordan2000} considered $H \in \{K_4,C_4\}$. 
In fact, a result of Ruci{\'n}ski and Wormald~\cite{RucinskiWormald1992} predates those mentioned above: 
in 1992 they considered the (much simpler) maximum degree $d$-process, which corresponds to the case $H=K_{1,d+1}$, and showed that whp\footnote{As usual, we say that an event holds \emph{with high probability}, or \emph{whp}, if it holds with probability $1-o(1)$ as $n\to\infty$.} it ends with $\lfloor nd/2\rfloor$ edges. 
The general $H$-free process was first analysed independently by Bollob{\'a}s and Riordan~\cite{BollobasRiordan2000} and Osthus and Taraz~\cite{OsthusTaraz2001} in 2000. 
In fact, they assumed that $H$ satisfies a certain density condition (strictly $2$-balanced), which holds for many  interesting graphs, including cycles and complete graphs. 
Osthus and Taraz determined the typical final number of edges up to logarithmic factors and conjectured that whp the average degree in the final graph of the $C_{\ell}$-free process is $\Theta((n \log n)^{1/(\ell-1)})$. 

The next improvements came about ten years later. 
In a breakthrough in 2009, Bohman~\cite{Bohman2009K3} obtained the first matching bounds: he proved that the $C_3$-free process ends whp with $\Theta(n^{3/2} \sqrt{\log n})$ edges, confirming a conjecture of Spencer~\cite{Spencer1995}. 
Next, Wolfovitz~\cite{Wolfovitz2009H} slightly improved the lower bound on the expected final number of edges for a range of graphs $H$. 
Very recently, for the class of strictly $2$-balanced \mbox{graphs $H$}, Bohman and Keevash~\cite{BohmanKeevash2010H} obtained new lower bounds that hold whp, which they conjectured to be tight up to the constants. In fact, their conjecture is for the maximum degree: for the $C_{\ell}$-free process they conjectured that the maximum degree is whp at most $D (n \log n)^{1/(\ell-1)}$ for some $D>0$.

As one can see, the typical final number of edges in the $H$-free process has attracted a lot of attention, and for a large class of \mbox{graphs $H$} interesting bounds are known. 
However, not much progress has been made in obtaining good upper bounds. 
After Bohman's result for $C_3$, the next case to be resolved was $H=K_4$, for which matching bounds have been obtained by the author~\cite{Warnke2010K4}, and, independently, by Wolfovitz~\cite{Wolfovitz2010K4}. 
During the preparation of this paper Picollelli~\cite{Picollelli2010K4Minus,Picollelli2010C4} also resolved the cases $H \in \{C_4,K_{4}^{-}\}$. 
But despite this progress, since the upper bound for the maximum degree $d$ process in~\cite{RucinskiWormald1992} is immediate, one can argue that non-trivial matching upper bounds have not been determined for any \emph{class} of graphs.

The $H$-free process is nowadays considered a model of independent interest as well. 
For strictly $2$-balanced $H$, the early evolution of various graph parameters, including the degree and the number of small subgraphs, has been investigated in~\cite{BohmanKeevash2010H,Wolfovitz2009K3Subgraph}. 
These results suggest that, perhaps surprisingly, during this initial phase the graph produced by the $H$-free process is very similar to the uniform random graph with the same number of edges, although it contains no copy of $H$. 
Studying the typical structural properties, e.g.\ the degree, in the later evolution of the $H$-free process is an intriguing problem, and so far only some preliminary results are known, cf.~\cite{GerkeMakai2010K3,Warnke2010H}.

Motivation for studying the $H$-free process also comes from extremal combinatorics, where its analysis has produced several new results. 
For example, improved lower bounds on the Tur{\'a}n numbers of certain bipartite graphs and Ramsey numbers $R(s,t)$ with $s \geq 4$ have been established in~\cite{Bohman2009K3,BohmanKeevash2010H,Wolfovitz2009H}, and Bohman~\cite{Bohman2009K3} reproved the famous lower bound for $R(3,t)$ obtained by Kim~\cite{Kim95}. 
One of the key ingredients for these results is an upper bound on the independence number of the $H$-free process, cf.~\cite{Bohman2009K3,BohmanKeevash2010H}. 
So far only for the special cases $H \in \{C_3,C_4\}$ are these estimates known to be best possible, and it would be interesting to obtain good lower bounds for other graphs.

\subsection{Main result}
In this paper we prove a new upper bound on the final number of edges of the $C_{\ell}$-free process. 
In fact, we give a new upper bound for the maximum degree, which confirms a conjecture of Bohman and Keevash~\cite{BohmanKeevash2010H} and improves previous upper bounds by Osthus and Taraz~\cite{OsthusTaraz2001}. 
\begin{theorem}%
\label{thm:main_result}
For every $\ell \geq 4$ there exists $D>0$ such that whp the maximum degree in the final graph of the $C_{\ell}$-free process is at most $D (n \log n)^{1/(\ell-1)}$. 
\end{theorem}
Up to the constant our upper bound is best possible, since the results of Bohman and Keevash~\cite{BohmanKeevash2010H} imply that for some $c > 0$, whp the minimum degree is at least $c (n \log n)^{1/(\ell-1)}$. 
The special case $\ell=4$ was proved independently by Picollelli~\cite{Picollelli2010C4}; since this manuscript was submitted Picollelli~\cite{Picollelli2011Cl} has independently also proved the case $\ell \geq 4$. 
So, combining our findings with~\cite{BohmanKeevash2010H}, we not only verify the mentioned conjecture of Osthus and Taraz~\cite{OsthusTaraz2001}, but establish the following stronger result. 
\begin{corollary}%
\label{cor:main_result:degree_edge}
For every $\ell \geq 4$ there exist $c,D > 0$ such that in the final graph of the $C_{\ell}$-free process whp the number of edges is between $cn^{\ell/(\ell-1)} (\log n)^{1/(\ell-1)}$ and $Dn^{\ell/(\ell-1)} (\log n)^{1/(\ell-1)}$, and whp the degree of every vertex is between $c (n \log n)^{1/(\ell-1)}$ and $D (n \log n)^{1/(\ell-1)}$. \qed 
\end{corollary}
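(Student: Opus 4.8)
The plan is to reduce the theorem to controlling the $C_\ell$-free process at a single intermediate time, and then to analyse that time with the differential equation method, using the transfer tool to dispose of a few awkward events. Write $t := (n\log n)^{1/(\ell-1)}$, let $G_m$ be the graph after $m$ steps of the process, and call $u$ an \emph{open neighbour} of $v$ in $G_m$ if $vu\notin G_m$ and $G_m+vu$ contains no $C_\ell$ (equivalently, no $v$--$u$ path with $\ell-1$ edges). The key elementary observation is that any edge at $v$ inserted after step $m_0$ must join $v$ to a vertex that was an open neighbour of $v$ at step $m_0$, and distinct such edges use distinct open neighbours (as $m$ increases the set of open neighbours of $v$ can only shrink); hence
\[
  \deg_{G_{\mathrm{final}}}(v) \ \le\ \deg_{G_{m_0}}(v) + \big|\{\,u : u \text{ is an open neighbour of } v \text{ in } G_{m_0}\,\}\big|.
\]
It therefore suffices to produce one time $m_0$ with $2m_0/n = \mu t$ for a suitable constant $\mu = \mu(\ell)>0$ such that whp (i) the process has not terminated by step $m_0$, (ii) every degree in $G_{m_0}$ is $O(t)$, and (iii) every vertex has $O(t)$ open neighbours in $G_{m_0}$; then $D$ is the sum of the implied constants. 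Part (i) follows from the lower bound on the final number of edges in~\cite{BohmanKeevash2010H}, provided $\mu$ is small enough.

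For (ii) and (iii) I would run the differential equation method in the spirit of Bohman--Keevash, tracking --- with one- and two-sided self-correcting estimates, and failure probabilities small enough to survive a union bound over the $O(n^2)$ relevant objects --- the degrees, the numbers of $v$--$u$ paths with $j$ edges for $2\le j\le \ell-1$, the number of open pairs, and the number of open neighbours of each vertex. These should follow the trajectories suggested by the heuristic $G_m\approx G(n,p)$ with $p=2m/n^2$: in particular the open-pair density should behave like $\exp(-\Theta((pn)^{\ell-1}/n))$, so at step $m_0$ it is $n^{-\Theta(\mu^{\ell-1})}$ and the expected number of open neighbours of a fixed vertex is $n^{\,1-\Theta(\mu^{\ell-1})}$. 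Taking $\mu$ a large enough constant pushes this exponent below $1/(\ell-1)$, so --- using the concentration built into the tracking --- every open-neighbour count is $O(t)$, which is (iii); and (ii) comes along with the same analysis once it is carried out to density $\mu t$. The delicate point is that $\mu$ can be chosen small enough for (i) and simultaneously large enough for (iii): the threshold for $\mu^{\ell-1}$ forced in (iii) is a constant strictly smaller than the value at which the process exhausts its open pairs, so there is room in between.

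The transfer tool enters here. Many of the bad events that the tracking must exclude --- some vertex lying in too many copies of an unbalanced configuration, a pair being joined by too many short paths, and so on --- are \emph{decreasing} graph properties whose failure in $G(n,p)$ at the relevant $p$ is routine; the transfer lemma then returns these bounds for $G_m$ directly, avoiding a separate self-correcting argument for each. The same device gives the stated independence-number bound, since ``$\alpha(G)\ge k$'' is decreasing and $G(n,p)$ has large independence number.

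The main obstacle is carrying the differential equation tracking out to density $\mu t$ with $\mu$ a genuine \emph{constant}, i.e.\ essentially to the end of the process: there the open pairs form only a polynomially small fraction of $\binom n2$, the normalised errors in the self-correcting estimates are at their worst, and the naive coupling $G_m\subseteq G(n,\tau_m)$ (where $\tau_m$ is the number of steps of the plain random graph process needed to accept $m$ edges) is hopeless, because $\tau_m$ grows polynomially. Obtaining honest uniform control of degrees and open-neighbour counts in this late regime --- and, in particular, proving the transfer lemma in a form robust enough to be usable there --- is the technical heart; the reduction above and the bookkeeping around it are comparatively routine.
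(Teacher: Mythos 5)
The corollary you are asked to prove is, in the paper, an immediate consequence of Theorem~\ref{thm:main_result} together with the lower bounds of Bohman and Keevash: the minimum- and maximum-degree bounds give the edge count via the handshake identity, and nothing more is needed. Your proposal instead re-derives the maximum-degree bound from scratch by a genuinely different route, and that route has a gap which you flag but do not close, and which in fact cannot be closed with the techniques available. Your reduction $\deg_{\mathrm{final}}(v)\le \deg_{G_{m_0}}(v)+|\{u: vu\in O(m_0)\}|$ is correct, but for it to yield $O((n\log n)^{1/(\ell-1)})$ you need a step $m_0$ at which every vertex has only $O((n\log n)^{1/(\ell-1)})$ open neighbours, i.e.\ at which the open-pair density has dropped to roughly $n^{-(\ell-2)/(\ell-1)}$, equivalently $|O(m_0)|\approx n^{\ell/(\ell-1)}$ --- the same order as the number of edges still to be added. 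That is essentially the terminal regime of the process. The Bohman--Keevash tracking (and the version used in this paper) is only valid while $q(t)\ge n^{-\epsilon/4}$ for a small constant $\epsilon$, i.e.\ while $|O(i)|=n^{2-o(1)}$; the constraint $2W\mu^{\ell-1}\le\epsilon$ in \eqref{eq:Cl-constants:Wepsmu} forces $\mu$ to be \emph{small}, whereas your argument needs $(2\mu)^{\ell-1}$ bounded below by roughly $(\ell-2)/(\ell-1)$. Extending dynamic concentration to that density is a major open problem for $\ell\ge 4$ (it was resolved only for the triangle-free process, by very long arguments postdating this paper), so ``carry the tracking out to density $\mu t$ with $\mu$ a genuine constant'' is not a technical detail but the entire difficulty. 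The transfer theorem does not rescue you there either: its proof needs $|O(i)|\ge n^2/\lambda$ with $M=i\lambda$, and at your $m_0$ this forces $M=\Theta(n^2)$, making the comparison graph complete.

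The paper's actual strategy is designed precisely to avoid ever looking at the late regime. It works only up to step $m$ with $\mu$ small, and shows that already by then every pair $(\tilde v,U)$ with $|U|=D(n\log n)^{1/(\ell-1)}$ whp acquires a $C_\ell$-extension (a path on $\ell-1$ vertices with both ends in $U$ avoiding $\tilde v$); since the final neighbourhood of $\tilde v$ can contain no such $U$, the maximum degree is bounded for the rest of the process with no further tracking. If you want a proof of the corollary, prove (or cite) Theorem~\ref{thm:main_result} and combine it with the whp lower bound $c(n\log n)^{1/(\ell-1)}$ on the minimum degree from~\cite{BohmanKeevash2010H}; summing degrees then gives both edge bounds.
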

This is a natural extension of the main  result of Bohman~\cite{Bohman2009K3} for the $C_{3}$-free process, 
and answers a question of Erd{\H o}s, Suen and Winkler for the $C_{\ell}$-free process (see~\cite{ChungGraham98,ErdoesSuenWinkler1995}): whp the final graph has $\Theta(n^{\ell/(\ell-1)} (\log n)^{1/(\ell-1)})$ edges. 
Since this question was asked for the $H$-free process in 1995, this is the first result that determines (up to constants) the final number of edges for a \emph{class} of graphs.

We also obtain a new lower bound on the independence number of the $C_{\ell}$-free process. 
Indeed, as pointed out to us by Picollelli, using Corollary~$2.4$ of Alon, Krivelevich and Sudakov~\cite{AlonKrivelevichSudakov1999}, Corollary~\ref{cor:main_result:degree_edge} implies the following bound conjectured in an earlier version of this paper (together with a proof of a weaker bound). 
\begin{corollary}%
\label{cor:main_result:independence:number}
For every $\ell \geq 4$ there exists $c > 0$ such that whp the independence number in the final graph of the $C_{\ell}$-free process is at least  $c (n \log n)^{(\ell-2)/(\ell-1)}$. \qed 
\end{corollary}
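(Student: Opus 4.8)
The plan is to derive this as a quick consequence of the maximum degree bound in Corollary~\ref{cor:main_result:degree_edge}, the (trivial) fact that the final graph is $C_{\ell}$-free, and the theorem of Alon, Krivelevich and Sudakov~\cite{AlonKrivelevichSudakov1999} on independent sets in graphs with locally sparse neighbourhoods. So the only probabilistic input will be the degree bound already in hand.

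The key deterministic observation is that $C_{\ell}$-freeness forces every neighbourhood to span few edges. Indeed, if $G$ is $C_{\ell}$-free and $v \in V(G)$, then $G[N(v)]$ cannot contain a path on $\ell-1$ vertices, since together with $v$ such a path would close up into a copy of $C_{\ell}$. Hence, by the Erd{\H o}s--Gallai theorem on paths, $G[N(v)]$ has at most $\tfrac{\ell-3}{2}\,|N(v)| \le \tfrac{\ell-3}{2}\Delta$ edges, where $\Delta$ denotes the maximum degree of $G$; equivalently, every neighbourhood spans at most $\Delta^2/f$ edges with $f = 2\Delta/(\ell-3) = \Theta(\Delta)$. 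Feeding this into Corollary~$2.4$ of~\cite{AlonKrivelevichSudakov1999} (equivalently, into their colouring theorem combined with $\alpha(G) \ge |V(G)|/\chi(G)$) shows that every $n$-vertex $C_{\ell}$-free graph $G$ with maximum degree $\Delta$ satisfies $\alpha(G) = \Omega_{\ell}\!\left(\tfrac{n\log \Delta}{\Delta}\right)$.

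It then remains to substitute the probabilistic input. By Corollary~\ref{cor:main_result:degree_edge}, whp the final graph $G$ of the $C_{\ell}$-free process has maximum degree $\Delta(G) = \Theta\bigl((n\log n)^{1/(\ell-1)}\bigr)$, so that $\log \Delta(G) = (1+o(1))\tfrac{1}{\ell-1}\log n$, and $G$ is $C_{\ell}$-free by construction. Plugging these into the bound of the previous paragraph yields
\[
\alpha(G) \;=\; \Omega_{\ell}\!\left(\frac{n\log n}{(n\log n)^{1/(\ell-1)}}\right) \;=\; \Omega_{\ell}\!\left((n\log n)^{(\ell-2)/(\ell-1)}\right),
\]
which is exactly the claimed bound. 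I do not expect a serious obstacle: the probabilistic content is entirely contained in Corollary~\ref{cor:main_result:degree_edge}, and the deterministic part is immediate from the Erd{\H o}s--Gallai theorem and the cited theorem. The only points needing a little care are checking that the regime $f = \Theta(\Delta)$ lies within the range of validity of the Alon--Krivelevich--Sudakov estimate (it does, since one always has $f \le \Delta^2$, and here $f$ is polynomial in $\Delta$), and verifying that the resulting constant $c$ can be chosen independently of $n$.
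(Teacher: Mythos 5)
Your proposal is correct and takes essentially the same route as the paper, which likewise obtains Corollary~\ref{cor:main_result:independence:number} by feeding the whp maximum degree bound of Corollary~\ref{cor:main_result:degree_edge} into Corollary~$2.4$ of Alon, Krivelevich and Sudakov~\cite{AlonKrivelevichSudakov1999}. Your Erd{\H o}s--Gallai argument correctly supplies the local-sparseness hypothesis (every neighbourhood is $P_{\ell-1}$-free, hence spans $O_{\ell}(\Delta)$ edges) that the paper leaves implicit.
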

Up to the constant this matches the upper bound established by Bohman and Keevash~\cite{BohmanKeevash2010H}. We infer that whp the independence number in the final graph of the $C_{\ell}$-free process is $\Theta(n \log n)^{(\ell-2)/(\ell-1)})$.

\subsection{Comparison with previous work} 
The basic idea of the proof is similar to~\cite{OsthusTaraz2001}: we show that, after a certain number of steps, every pair $(\tilde{v},U)$ with $\tilde{v} \notin U$ and $|U|=D (n \log n)^{1/(\ell-1)}$ has some property that prevents $U \subseteq \Gamma(\tilde{v})$ in the final graph of the $C_{\ell}$-free process. 
Osthus and Taraz~\cite{OsthusTaraz2001} establish their $O(n^{1/(\ell-1)} \log n)$ bound for the maximum degree using a `static' point of view: they couple the $C_{\ell}$-free process (or more generally the $H$-free process) with the classical random graph process and then show that even after deleting all edges contained in a copy of $C_{\ell}$, every $(\tilde{v},U)$ has the desired property. 
By contrast, we obtain the better $O((n \log n)^{1/(\ell-1)})$ bound by tracking the step-by-step effects of each edge added in the $C_{\ell}$-free process, and our main tool is the differential equation method used in~\cite{Warnke2010K4}.

Our argument relates to the proof of Bohman for the $C_{3}$-free process as follows. 
In~\cite{Bohman2009K3} it is shown that every large set of vertices contains at least one edge, which implies a bound on the maximum degree, since the neighbourhood of each vertex is an independent set. 
In other words, the upper bound follows from a bound on the independence number. 
For the $C_{\ell}$-free process, $\ell \geq 4$, the maximum degree is a separate question. 
In particular, we need to consider a more involved event, and thus must study the combinatorial structure of large sets more precisely.

To this end we track several random variables for every $(\tilde{v},U)$. 
But, when applying the differential equation method, there are significant technical difficulties, and a simple refinement of the approach used in~\cite{Warnke2010K4} for the $K_4$-free process does not suffice to overcome them. 
Here one crucial ingredient is a new connection between the $H$-free process and the  Erd{\H{o}}s--R{\'e}nyi random graph, which might be of independent interest. 
More precisely, we develop a `transfer theorem', which enables us to prove certain results for the $H$-free process using the \emph{much} simpler binomial random graph model. 
This is a key tool for establishing properties of the $C_{\ell}$-free process which otherwise seem difficult to derive. 
We believe that it will also aid in proving new upper bounds for the $H$-free process.

\subsection{Organization of the paper}
We start by collecting the relevant properties of the $C_{\ell}$-free process in Section~\ref{sec:CL-free}. 
In Section~\ref{sec:tools} we then introduce several probabilistic tools and the differential equation method. 
Section~\ref{sec:bounding_max_degree} is devoted to the proof of Theorem~\ref{thm:main_result}. 
Our argument relies on two key statements, whose proofs are deferred to Sections~\ref{sec:trajectory_verification} and~\ref{sec:good_configurations_exist}.  
We apply the differential equation method in Section~\ref{sec:trajectory_verification}, and introduce the `transfer theorem' in Section~\ref{sec:transfer}. 
Next, in Section~\ref{sec:binomial_results} we collect properties of the binomial random graph, which are then used to complete the proof in Section~\ref{sec:good_configurations_exist}.

\section{The $C_{\ell}$-free process: preliminaries and notation}
\label{sec:CL-free}
In this section we introduce some notation and briefly review properties of the $C_{\ell}$-free process needed in our argument. 
We closely follow~\cite{BohmanKeevash2010H} and the reader familiar with the results of Bohman and Keevash may wish to skip this section.

\subsection{Terminology and notation}\label{sec:notation} 
Let $G(i)$ denote the graph with vertex set $[n]=\{1,\ldots, n\}$ after $i$ steps of the $C_{\ell}$-free process. 
Its edge set $E(i)$ contains $i$ edges; we partition the remaining non-edges $\binom{[n]}{2} \setminus E(i)$ into two sets, $O(i)$ and $C(i)$, which we call \emph{open} and \emph{closed} pairs, respectively. 
We say that a pair $uv$ of vertices is \emph{open} in $G(i)$ if $G(i) \cup \{uv\}$ contains no copy of $C_{\ell}$. 
So, the $C_{\ell}$-free process always chooses the next edge $e_{i+1}$ uniformly at random from $O(i)$. 
In addition, for $uv \in O(i) \cup C(i)$ we write $C_{uv}(i)$ for the set of pairs $xy \in O(i)$ such that adding $uv$ and $xy$ to $G(i)$ creates a copy of $C_{\ell}$ containing both $uv$ and $xy$. 
Note that $uv \in O(i)$ would become closed, i.e., belong to $C(i+1)$, if $e_{i+1} \in C_{uv}(i)$.

With a given graph in mind, we denote the \emph{neighbourhood} of a vertex $v$ by $\Gamma(v)$, where, as usual, $\Gamma(v)$ does not include $v$. For $S \subseteq [n]$ we define $\Gamma(S) = \bigcup_{v \in S} \Gamma(v)$. 
Furthermore, for $A,B \subseteq [n]$, let $e(A,B)$ denote the number of edges that have one endpoint in $A$ and the other in $B$, where an edge with both ends in $A \cap B$ is counted once.  
If the graph under consideration is $G(i)$ we simply write $\Gamma_i(\cdot)$, but usually we omit the subscript if the corresponding $i$ is clear from the context. 
Given a set $S$ and an integer $k \geq 0$, we write $\binom{S}{k}$ for the set of all $k$-element subsets of $S$.

We use the symbol $\pm$ in two different ways, following~\cite{Bohman2009K3,BohmanKeevash2010H}. 
First, we denote by $a \pm b$ the interval $\{ a + x b : -1 \leq x \leq 1\}$. Multiple occurrences are treated independently; for example,  $\sum_{i \in [j]}(a_i \pm b_i)$ and $\prod_{i \in [j]}(a_i \pm b_i)$ mean $\{ \sum_{i \in [j]}(a_i + x_i b_i) : -1 \leq x_1, \ldots, x_{j} \leq 1\}$ and $\{ \prod_{i \in [j]}(a_i + x_i b_i) : -1 \leq x_1, \ldots, x_{j} \leq 1\}$, respectively. 
For brevity we also use the convention that $x=a \pm b$ means $x \in a \pm b$.  
Second, when considering pairs of random variables and functions, e.g.\ $Y^{+}$, $Y^{-}$ and $y^{+}$, $y^{-}$, we use the superscript $\pm$ to denote two different statements: one with~$\pm$ replaced by~$+$, and the other with~$\pm$ replaced by~$-$. For example,  $Y^{\pm}(i)=y^{\pm}(t)$ means $Y^{+}(i)=y^{+}(t)$ and $Y^{-}(i)=y^{-}(t)$. 
Finally, combinations of both ways are treated independently; for example, $Y^{\pm}(i)=y^{\pm}(t) \pm b$ means $Y^{+}(i)=y^{+}(t) \pm b$ and $Y^{-}(i)=y^{-}(t)\pm b$.

\subsection{Parameters, functions and constants} 
In the remainder of this paper we fix $\ell \geq 4$. 
Following~\cite{BohmanKeevash2010H}, we introduce constants $\epsilon$, $\mu$ and $W$. We choose $W$ sufficiently large and afterwards $\epsilon$ and $\mu$ small enough such that, in addition to the constraints implicit in~\cite{BohmanKeevash2010H} for $H=C_{\ell}$, we have
\begin{equation}
\label{eq:Cl-constants:Wepsmu}
W \geq \ell^{2} 2^{\ell+1} \geq 50
   , \qquad 
\epsilon \leq 1/\big(2^{15}\ell^3\big)
  \qquad \text{ and } \qquad 
 2W\mu^{\ell-1} \leq \epsilon  .
\end{equation}
Since the additional constraints in~\cite{BohmanKeevash2010H} only depend on $H=C_{\ell}$, we deduce that $\mu$ is an absolute constant (depending only on $\ell$). 
Next, similar as in~\cite{BohmanKeevash2010H} we set 
\begin{equation}
\label{eq:Cl-parameters}
p = n^{-1+1/(\ell-1)}  ,  \quad  t_{\max} = \mu (\log n)^{1/(\ell-1)}  \quad  \text{and}  \quad  m =  n^2 p t_{\max} = \mu n^{\ell/(\ell-1)} (\log n)^{1/(\ell-1)} .  
\end{equation}
Formally, $m$ (a number of steps) should be defined as $\lfloor n^2 p t_{\max} \rfloor$, say, but, as usual, we will henceforth ignore the irrelevant rounding to integers. 
For every step $i$ we define $t = t(i) = i/(n^2p)$, where, for the sake of brevity, we simply write $t$ if the corresponding $i$ is clear from the context.  
Next we introduce the functions 
\begin{equation}
\label{eq:Cl-functions}
q(t) = e^{-(2t)^{\ell-1}} \qquad \text{ and } \qquad  f(t) = e^{(t^{\ell-1} + t)W}   .
\end{equation} 
Now, using \eqref{eq:Cl-constants:Wepsmu}, for every $0 \leq t \leq t_{\max}$, for $n$ large enough we readily obtain 
\begin{equation}
\label{eq:Cl-functions-estimates}
1 \geq q(t) \geq n^{-\epsilon/4} \qquad \text{ and } \qquad	1 \leq f(t) q(t)^{\ell} \leq f(t) \leq n^{\epsilon}   .
\end{equation}

\subsection{Previous results for the $C_{\ell}$-free process} 
The results of Bohman and Keevash~\cite{BohmanKeevash2010H} imply that a wide range of random variables are dynamically concentrated throughout the \mbox{first $m$ steps} of the $C_{\ell}$-free process. 
For our argument the key properties are estimates on the number of open pairs as well as bounds for the degree and certain closed pairs. 
So, for the reader's convenience we state their results here in a simplified form. 
\begin{theorem}%
\label{thm:BohmanKeevash2010H}%
{\normalfont\cite{BohmanKeevash2010H}} 
Set $s_e = n^{1/(2\ell)-\epsilon}$. 
Let $\cT_j$ denote the event that for every $0 \leq i \leq j$, we have $|O(i)| > 0$ as well as 
\begin{align}
\label{eq:open-estimate}
|O(i)| &= \left(1 \pm 3f(t)/s_e\right) q(t)n^2/2 && \text{and}\\
\label{eq:degree-estimate}
|\Gamma_i(v)| & \leq 3 np t_{\max} && \text{for all vertices $v \in [n]$.}  
\end{align}
Let $\cJ_j$ denote the event that for every $0 \leq i \leq j$ we have 
\begin{align}
\label{eq:closed-estimate}
 &|C_{uv}(i)| = \left((\ell-1) (2t)^{\ell-2} q(t) \pm 7\ell f(t)/s_e\right) p^{-1} && \text{for all $uv \in O(i) \cup C(i)$ and }  \\
\label{eq:closed-intersection-estimate}
 & |C_{u'v'}(i)  \cap C_{u''v''}(i)| \leq  n^{-1/\ell} p^{-1} &&  \text{for all distinct $u'v',u''v'' \in O(i)$.} 
\end{align}
Then $\cJ_m \cap \cT_m$ holds whp in the $C_{\ell}$-free process. \qed 
\end{theorem}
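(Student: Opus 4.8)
This is the specialization to $H=C_{\ell}$ of the dynamic-concentration theorems of Bohman and Keevash~\cite{BohmanKeevash2010H}, so strictly speaking nothing new is needed; I sketch how one would prove it, since the same method underlies Section~\ref{sec:trajectory_verification}. The plan is to run the \emph{differential equation method}, tracking the variables $|O(i)|$, the degrees $|\Gamma_i(v)|$ ($v \in [n]$), the closure counts $|C_{uv}(i)|$ ($uv \in O(i) \cup C(i)$), and the codegree-type quantities $|C_{u'v'}(i) \cap C_{u''v''}(i)|$ (distinct $u'v', u''v'' \in O(i)$) \emph{simultaneously}, showing each stays within its prescribed band and then union-bounding over the $n^{O(1)}$ variables. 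The trajectories are exactly the scaled functions in~\eqref{eq:open-estimate}--\eqref{eq:closed-estimate}: $q(t)n^2/2$ for $|O(i)|$, the linear function $2tnp$ for the degrees (with slack up to $3npt_{\max}$), and $(\ell-1)(2t)^{\ell-2}q(t)p^{-1}$ for $|C_{uv}(i)|$; these are precisely the solutions of the heuristic differential equations read off from the one-step expected changes.

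The first substantial step is the one-step analysis. I would condition on $G(i)$ and on all estimates having held through step $i$, so the only randomness is the uniform choice $e_{i+1} \in O(i)$. Here $|O(i+1)| = |O(i)| - 1 - |C_{e_{i+1}}(i)|$, whence $\EE[|O(i+1)| - |O(i)| \mid G(i)] = -1 - |O(i)|^{-1}\sum_{e \in O(i)}|C_{e}(i)|$; inserting~\eqref{eq:closed-estimate} inductively gives $-(1+o(1))(\ell-1)(2t)^{\ell-2}q(t)p^{-1}$, which, multiplied by $dt = (n^2p)^{-1}$, matches $\frac{d}{dt}(q(t)n^2/2)$. For $|C_{uv}(i)|$ I would count copies of $C_{\ell}$ through $uv$ and a second, variable edge, decomposing each copy into the two edges plus $\ell - 2$ further edges forming two internally disjoint paths; this is where the combinatorics is genuinely heavier than in the triangle case, and where the degree bound~\eqref{eq:degree-estimate} and the open-pair estimate~\eqref{eq:open-estimate} are needed to control the relevant path counts, the constant $\ell-1$ accounting for the distinct ways the second edge can sit on the cycle. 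The bound~\eqref{eq:closed-intersection-estimate} I would treat separately and crudely: a pair $xy$ in both $C_{u'v'}(i)$ and $C_{u''v''}(i)$ forces two essentially disjoint copies of $C_{\ell}$ to coexist, and a first-moment count over these configurations, again using~\eqref{eq:degree-estimate}, yields the claimed $n^{-1/\ell}p^{-1}$.

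To turn these expected-change estimates into concentration I would use the \emph{self-correcting} supermartingale device. For $|O(i)|$ and each $|C_{uv}(i)|$, form the two error processes obtained by subtracting the trajectory inflated by $\pm 3f(t)/s_e$ (respectively the additive band $\pm 7\ell f(t)p^{-1}/s_e$ for $C_{uv}$): when the variable drifts toward the top of its band its expected increment drops below that of the trajectory, and symmetrically at the bottom, so the upper error process is a supermartingale and the lower one a submartingale; the exponential growth of $f(t)$ is tuned precisely to absorb the martingale fluctuations via Freedman's inequality, applied up to the stopping time at which some estimate first fails. The degrees $|\Gamma_i(v)|$ are monotone increasing, so no self-correction is needed — a one-sided Azuma/Freedman bound on the sum of the $0/1$ increments $\PP[e_{i+1} \ni v \mid G(i)]$ suffices. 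Finally, I would take a single stopping time, namely the first $i \le m$ at which~\eqref{eq:open-estimate}--\eqref{eq:closed-intersection-estimate} fails for some vertex or pair, union-bound over the polynomially many events, and conclude that this stopping time exceeds $m$ whp, i.e.\ $\cJ_m \cap \cT_m$ holds whp. The hard part is not any individual estimate but the organization: the four families are mutually dependent through the one-step computations, forcing a single simultaneous induction, and the error functions must be calibrated so that every self-correction inequality comes out with the right sign — this bookkeeping is the real content, and is carried out in full in~\cite{BohmanKeevash2010H}.
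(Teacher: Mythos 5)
The paper does not prove this theorem: it is quoted from Bohman and Keevash, with \eqref{eq:open-estimate}--\eqref{eq:degree-estimate} read off from their Theorem~1.4 and \eqref{eq:closed-estimate}--\eqref{eq:closed-intersection-estimate} from their Corollary~6.2 and Lemma~8.4 (plus the unordered-pair factor of $2$ and a union bound justified by their $1-n^{-\omega(1)}$ failure probabilities). Your sketch correctly identifies this and gives an accurate high-level account of the self-correcting martingale / differential equation argument that underlies the cited proof, so it is essentially the same approach.
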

After some simple estimates, both \eqref{eq:open-estimate} and \eqref{eq:degree-estimate} follow directly from Theorem~$1.4$ in~\cite{BohmanKeevash2010H}. \marginal{CHECK!}
Now, using $\mathrm{aut}(C_{\ell}) = 2\ell$ and $(2t)^{\ell-2} q(t) \leq 1$, which follow from elementary considerations, Corollary~$6.2$ and Lemma~$8.4$ in~\cite{BohmanKeevash2010H} imply \eqref{eq:closed-estimate} and \eqref{eq:closed-intersection-estimate}. 
(Because the `high probability events' of~\cite{BohmanKeevash2010H} in fact hold with probability at least $1-n^{-\omega(1)}$, we may take the union bound over all steps and pairs.) 
We remark that there is a factor of $2$ difference in \eqref{eq:closed-estimate} since we use unordered instead of ordered pairs.

In our argument we use two additional properties of the $C_{\ell}$-free process. 
The next lemma follows from Lemmas~$4.2$ and~$4.3$ in~\cite{Warnke2010K4}, which in turn are based on Lemmas $4.1$--$4.3$ in~\cite{BohmanKeevash2010H}. 
\begin{lemma}%
\label{lem:edges_bounded_and_large_degree_bounded}%
\normalfont{\cite{Warnke2010K4}} 
Let $\cK_i$ denote the event that for all $a,b \geq 1$ and every $A, B \subseteq [n]$ with $|A|=a$ and $|B|=b$, in $G(i)$ we have $e(A,B) < \max\{4\epsilon^{-1}(a+b),pabn^{2\epsilon}\}$. 
Let $\cL_i$ denote the event that for all $a \geq 1$ and $d \geq \max\{16\epsilon^{-1}, 2apn^{2\epsilon}\}$, for every $A \subseteq [n]$ with $|A| = a$ we have $|D_{A,d}(i)| < 16\epsilon^{-1}d^{-1}a$, where $D_{A,d}(i) \subseteq [n]$ contains all vertices $v \in [n]$ with $|\Gamma(v) \cap A| \geq d$ in $G(i)$. 
Then the probability that $\cT_m$ holds and $\cK_m \cap \cL_m$ fails is $o(1)$. \qed 
\end{lemma}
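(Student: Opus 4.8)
The plan is to reduce both halves of the lemma to one robust estimate: conditioned on the good event $\cT_m$, a fixed family of $k$ potential edges is very unlikely to end up entirely inside the final graph $G(m)$. Given this, $\cK_m$ and $\cL_m$ follow by union bounds over $A$ and $B$.

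For the core estimate, observe that on $\cT_i$ the bounds~\eqref{eq:open-estimate} and~\eqref{eq:Cl-functions-estimates} give $|O(i)| \geq q(t)n^2/4 \geq n^{2-\epsilon/4}/4$ for $n$ large, so at step $i+1$ each fixed pair becomes the new edge with conditional probability at most $4n^{-2+\epsilon/4}$. If a fixed $k$-element set of pairs is to lie entirely in $E(m)$ while $\cT_m$ holds, its members are added at $k$ distinct steps among the first $m$; summing over the at most $m^k$ ordered choices of these steps and peeling off one conditional factor at a time (the inclusions $\cT_i \subseteq \cT_{i-1}$ keep the conditioning valid), this has probability at most $(4mn^{-2+\epsilon/4})^k$. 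A union bound over the $k$-subsets of an arbitrary family $S$ of pairs then yields
\begin{equation*}
\PP\bigl[\,|S \cap E(m)| \geq k \ \text{ and }\ \cT_m \text{ holds}\,\bigr]\ \leq\ \binom{|S|}{k}\bigl(4 m n^{-2+\epsilon/4}\bigr)^k\ =\ \binom{|S|}{k}\bigl(4 p\, t_{\max}\, n^{\epsilon/4}\bigr)^k\ \leq\ \binom{|S|}{k}\bigl(4 p\, n^{\epsilon/2}\bigr)^k ,
\end{equation*}
using $m = n^2 p\, t_{\max}$ from~\eqref{eq:Cl-parameters} and $t_{\max} = n^{o(1)}$. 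This is in essence Lemmas~4.1--4.3 of~\cite{BohmanKeevash2010H} (cf.\ Lemmas~4.2--4.3 of~\cite{Warnke2010K4}), from which the present lemma is quoted.

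To deduce $\cK_m$, suppose it fails: then there are $A,B$ with $|A|=a$, $|B|=b$ and $e(A,B) \geq k^{\ast} := \max\{4\epsilon^{-1}(a+b),\, pabn^{2\epsilon}\}$. Applying the core estimate to the family of at most $ab$ cross pairs, with $\binom{ab}{k^{\ast}} \leq (eab/k^{\ast})^{k^{\ast}}$ and $k^{\ast} \geq pabn^{2\epsilon}$, bounds the offending probability by $(4en^{-3\epsilon/2})^{k^{\ast}}$; since moreover $k^{\ast} \geq 4\epsilon^{-1}(a+b)$, this is at most $\bigl((4e)^{4\epsilon^{-1}} n^{-6}\bigr)^{a+b}$, which for $n$ large dominates the $\binom{n}{a}\binom{n}{b} \leq n^{a+b}$ choices of $(A,B)$, so summing over all $a,b \geq 1$ gives $\PP[\cT_m \text{ and } \neg\cK_m] = o(1)$. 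For $\cL_m$, we may assume $d \leq a$ (otherwise $D_{A,d}(m) = \emptyset$ trivially), so a violation forces a set $A$ of size $a$, an admissible $d$, and a set $B$ with $1 \leq |B| = \lceil 16\epsilon^{-1}d^{-1}a \rceil \leq a$ such that $e(A,B) \geq \tfrac12|B|d \geq 8\epsilon^{-1}a$ (the factor $\tfrac12$ absorbs a possible overlap of $A$ and $B$). Applying the core estimate to the at most $a|B|$ cross pairs, with $d \geq 2apn^{2\epsilon}$, bounds this probability by $(8en^{-3\epsilon/2})^{8\epsilon^{-1}a} = \bigl((8e)^{8\epsilon^{-1}} n^{-12}\bigr)^{a}$, which crushes the $\leq n^{2a}$ choices of $(A,B)$ and the $\leq n$ relevant integer values of $d$; hence $\PP[\cT_m \text{ and } \neg\cL_m] = o(1)$. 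Adding the two bounds proves the lemma.

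The only genuinely delicate point is the core estimate: the per-step bound $4n^{-2+\epsilon/4}$ must be invoked strictly on the event $\cT_i$ (it is false unconditionally), and the sum over the steps at which the $k$ pairs are added has to be organised so that it does not reintroduce dependence among the edges — this is exactly the bookkeeping in~\cite{BohmanKeevash2010H,Warnke2010K4}. Everything afterwards is a union bound in which the polynomial gain $n^{-3\epsilon/2}$, raised to a power proportional to $a+b$ (resp.\ $a$), absorbs the entropy of choosing $A$ and $B$; note that the two terms defining $\cK_i$ are calibrated exactly so that $k^{\ast}$ is simultaneously large enough ($\geq pabn^{2\epsilon}$) to beat the binomial coefficient $\binom{ab}{k^{\ast}}$ and large enough ($\geq 4\epsilon^{-1}(a+b)$) to beat the $n^{a+b}$ term, and similarly for $\cL_i$.
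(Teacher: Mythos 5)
Your proof is correct, and it reconstructs essentially the argument the paper relies on: the lemma is quoted without proof from Lemmas~4.2--4.3 of~\cite{Warnke2010K4}, which in turn rest on the basic estimate of Lemmas~4.1--4.3 of~\cite{BohmanKeevash2010H} — namely the per-step bound $1/|O(i)| \leq 4n^{-2+\epsilon/4}$ on $\cT_i$, the resulting bound $\binom{|S|}{k}\bigl(4pn^{\epsilon/2}\bigr)^k$ for $k$ pairs landing in $E(m)$, and union bounds in which the two terms of the maximum are calibrated to beat the binomial coefficient and the entropy of $(A,B)$ respectively. Your reductions of $\cK_m$ and $\cL_m$ to this core estimate (including the factor $\tfrac12$ for possible overlap of $A$ and $B$, and the reduction to $d \leq a$) are sound.
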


\section{Probabilistic tools}
\label{sec:tools}
In this section we introduce several probabilistic tools that we will use in our argument.

\subsection{Concentration inequalities} 
The following Chernoff bounds, see e.g.\ Section~$2.1$ of~\cite{JLR2000RandomGraphs}, provide estimates for the probability that a sum of independent indicator variables deviates substantially from its expected value. 
\begin{lemma}[`Chernoff bounds']%
\label{lem:chernoff}%
Let $X = \sum_{i\in[n]} X_i$, where the $X_i$'s are independent Bernoulli-distributed random variables.  Set  $\mu = \EE[X]$. 
Then for all $t \geq 0$ we have
\begin{equation}
\label{eq:chernoff:lower}
\PP[X \leq \mu - t] \leq e^{-t^2/(2\mu)}   . 
\end{equation}
Furthermore, for all $t \geq 7 \mu$ we have
\begin{equation}
\label{eq:chernoff:upper:simple}
\PP[X \geq t] \leq e^{-t}   . 
\end{equation} 
\end{lemma}
In our argument we need to estimate the probability that in $G_{n,p}$ some subset contains `too many' copies of a certain graph. 
R{\"o}dl and Ruci\'nski~\cite{RoedlRucinski1995} showed that exponential upper-tail bounds can be obtained if we allow for deleting a few edges; this is usually referred to as the Deletion Lemma~\cite{JansonRucinski2004Deletion}. 
\begin{lemma}[`Deletion Lemma']%
\label{lem:deletion_lemma}%
Suppose $0 < p < 1$ and that $\cS$ is a family of subsets from $\binom{[n]}{2}$. 
We say that a graph $G$ \emph{contains} $\alpha \in \cS$ if all the edges of $\alpha$ are present in $G$. 
Let $\mu$ denote the expected number of elements in $\cS$ that are contained in $G_{n,p}$. 
Let $\cD\cL(b,k,\cS)$ denote the event that there exists $\cI_0 \subseteq \cS$ with $|\cI_0| \leq b$ such that, setting $E_0 = \bigcup_{\alpha \in \cI_0} \alpha$, $G(n,p) \setminus E_0$ contains at most $\mu+k$ elements from $\cS$. 
Then for every $b,k>0$ the probability that $\cD\cL(b,k,\cS)$ fails is at most 
\begin{equation*}
\label{eq:lem:deletion_lemma}
\left(1+\frac{k}{\mu}\right)^{-b} \leq \exp\left\{-\frac{b k}{\mu+k}\right\}   . 
\end{equation*} 
\end{lemma}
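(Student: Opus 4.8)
The plan is to reproduce the classical argument of R\"odl and Ruci\'nski~\cite{RoedlRucinski1995} (see also Janson and Ruci\'nski~\cite{JansonRucinski2004Deletion}); it is self-contained and short. For a graph $H$ write $X(H) = |\{\alpha \in \cS : \alpha \subseteq H\}|$, so that $\mu = \EE[X(G(n,p))]$; the case $\mu = 0$ is trivial, so assume $\mu > 0$. First I would reduce the statement to the analysis of a \emph{greedy deletion process}: start with $\cI_0 = \emptyset$; as long as $X\big(G(n,p) \setminus \bigcup_{\alpha \in \cI_0}\alpha\big) > \mu+k$, add to $\cI_0$ the lexicographically first member of $\cS$ still contained in $G(n,p) \setminus \bigcup_{\alpha \in \cI_0}\alpha$. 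Let $T$ be the number of rounds performed (note $T \le |\cS| < \infty$). If $\cD\cL(b,k,\cS)$ fails, then for the set obtained after any $j \le b$ rounds more than $\mu+k$ members are still present (this is exactly what failure asserts for every subfamily of size $\le b$), so the process does not stop within $b$ rounds; hence $\{\cD\cL(b,k,\cS)\text{ fails}\} \subseteq \{T \ge b+1\}$, and it suffices to show $\PP[T \ge b+1] \le (1+k/\mu)^{-b}$.

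Next I would run the process while revealing the edges of $G(n,p)$ lazily, in exactly the order in which the process inspects them, and \emph{parsimoniously}: when a candidate $\alpha \in \cS$ is tested, expose the not-yet-revealed edges of $\alpha$ one at a time and stop at the first absent edge. Let $\cH_j$ record all information revealed during the first $j$ rounds (the deleted members $\alpha^{(1)},\dots,\alpha^{(j)}$ together with the statuses of all exposed edges). Conditioned on $\{T \ge j\}$ and on $\cH_j$, the unexposed edges are still independent and present with probability $p$, and the members of $\cS$ contained in $G(n,p) \setminus (\alpha^{(1)} \cup \cdots \cup \alpha^{(j)})$ are precisely those avoiding that edge set, avoiding every revealed-absent edge, and having all of their remaining edges present. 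The key claim is that, thanks to the parsimonious revelation, the conditional expectation of the number of such members is at most $\mu$. Granting this, Markov's inequality gives $\PP\big[\,T \ge j+1 \bigm| T \ge j,\ \cH_j\,\big] \le \mu/(\mu+k)$, and multiplying over $j = 1,\dots,b$ yields $\PP[T \ge b+1] \le \big(\mu/(\mu+k)\big)^{b} = (1+k/\mu)^{-b}$. The exponential form is then elementary: since $\ln(1+x) \ge x/(1+x)$ for $x > -1$, taking $x = k/\mu$ gives $(1+k/\mu)^{-b} \le \exp\{-bk/(\mu+k)\}$.

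The main obstacle is precisely the key claim of the previous paragraph: one must verify that conditioning on the revelation history does not inflate the expected number of surviving members of $\cS$ above its unconditional value $\mu$. This is exactly where the order of exposure matters, since revealing an edge to be \emph{present} can only help a member survive: the exposure rule must guarantee that each present edge it reveals either lies inside an already-deleted member $\alpha^{(i)}$ (hence irrelevant, as such members are excluded in any case) or is compensated by a member that the same exposure has certified absent. Setting up this one-round accounting cleanly — equivalently, identifying the right supermartingale-type estimate controlling $\EE[X(G(n,p)\setminus(\alpha^{(1)}\cup\cdots\cup\alpha^{(j)})) \mid \cH_j]$ — is the technical heart of the proof; the reduction to the greedy process and the final calculus are routine bookkeeping.
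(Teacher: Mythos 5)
The paper itself gives no proof of this lemma --- it is quoted from R\"odl--Ruci\'nski and Janson--Ruci\'nski --- so I am comparing your proposal with the standard argument from those references. Your reduction to a greedy deletion process, the target factor $\mu/(\mu+k)$ per round, and the closing inequality $\ln(1+x)\ge x/(1+x)$ are all fine, but the step you correctly single out as the technical heart is not merely unproved: it is false. First, the event $\{T\ge j\}$ is not determined by the parsimoniously revealed edges (deciding whether more than $\mu+k$ members survive requires inspecting essentially the whole graph), so you cannot condition on it and still treat the unexposed edges as unbiased; conditioning on $\{T\ge j\}$ is conditioning on an upper-tail event for the very count you are trying to bound. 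Second, even dropping that conditioning, the ``compensation'' accounting fails quantitatively. Take $\cS=\{\beta,\delta,\gamma_1,\dots,\gamma_M\}$ with $\beta=\{e,f\}$ lexicographically first, $\delta=\{d_1,d_2\}$ disjoint from everything else, and $\gamma_i=\{e,g_i\}$. If round $1$ tests $\beta$ (revealing $e$ present, $f$ absent) and then deletes $\delta$ --- a history of positive probability --- the conditional expectation of the number of surviving members is $Mp$, since each $\gamma_i$ now only needs $g_i$ to be present, whereas $\mu=(M+2)p^2$; for small $p$ the single certified-absent member $\beta$ nowhere near compensates the boost to the $M$ members sharing the revealed-present edge $e$. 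In this example the per-round bound $\PP[T\ge j+1\mid T\ge j,\cH_j]\le\mu/(\mu+k)$ is itself violated, so no refinement of the exposure order can rescue the round-by-round decomposition.

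The actual proof is non-adaptive and needs no conditioning at all. Let $Z_b$ be the number of ordered sequences $(\alpha_1,\dots,\alpha_b)\in\cS^b$ with $\alpha_i\cap(\alpha_1\cup\dots\cup\alpha_{i-1})=\emptyset$ for all $i$ and with every $\alpha_i$ contained in $G_{n,p}$. If $\cD\cL(b,k,\cS)$ fails, the greedy selection you describe produces more than $\mu+k$ admissible choices at each of the $b$ stages, so $Z_b>(\mu+k)^b$. On the other hand, the edge-disjointness makes the $b$ containment events independent for each admissible sequence, whence $\EE[Z_b]\le\bigl(\sum_{\alpha\in\cS}p^{|\alpha|}\bigr)^b=\mu^b$, and a single application of Markov's inequality gives the bound $(1+k/\mu)^{-b}$. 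The product structure you were after lives in this factorization of $\EE[Z_b]$, not in a product of conditional probabilities.
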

In~\cite{Warnke2010K4} a slightly weaker variant of the above lemma was proven for the $H$-free process, where $H$ is strictly $2$-balanced. 
The results of Section~\ref{sec:transfer} will shed some light on this intriguing phenomenon.

\subsection{Differential equation method}
\label{sec:dem}
A crucial ingredient of our analysis is the differential equation method, which was developed by Wormald~\cite{Wormald1995DEM,Wormald1999DEM} to show that in certain discrete stochastic processes a collection $\cV$ of random variables is whp approximated by the solution of a suitably defined system of differential equations. 
Developing ideas of Bohman and Keevash~\cite{BohmanKeevash2010H}, the following variant was introduced in~\cite{Warnke2010K4}. 
It will be an important tool for showing that certain random variables are dynamically concentrated throughout the evolution of the $C_{\ell}$-free process. 
\begin{lemma}[`Differential Equation Method' \normalfont{\cite[Lemma 5.3]{Warnke2010K4}}]%
\label{lem:dem}%
Suppose that $m=m(n)$ and $s=s(n)$ are positive parameters. 
Let $\cC=\cC(n)$ and $\cV=\cV(n)$ be sets. 
For every $0 \leq i \leq m$ set $t=t(i)=i/s$. 
Suppose we have a filtration $\cF_0 \subseteq \cF_1 \subseteq \cdots$ and random variables $X_{\sigma}(i)$ and $Y^{\pm}_{\sigma}(i)$ which satisfy the following conditions. 
Assume that for all $\sigma \in \cC \times \cV$ the random variables $X_{\sigma}(i)$ are non-negative and $\cF_i$-measurable for all $0 \leq i \leq m$, and that for all $0 \leq i < m$ the random variables $Y^{\pm}_{\sigma}(i)$ are non-negative, $\cF_{i+1}$-measurable and satisfy
\begin{equation}
\label{eq:lem:dem:rv_relation}
X_{\sigma}(i+1) - X_{\sigma}(i) =  Y^{+}_{\sigma}(i) - Y^{-}_{\sigma}(i)   .
\end{equation} 
Furthermore, suppose that for all $0 \leq i \leq m$ and $\Sigma \in \cC$ we have an event $\badE \in \cF_i$.
Then, for all $0 \leq i \leq m$ we define $\badEL = \bigcup_{0 \leq j \leq i} \badE[j]$. 
In addition, suppose that for each $\sigma \in \cC \times \cV$ we have positive parameters $u_{\sigma}=u_{\sigma}(n)$, $\lambda_{\sigma}=\lambda_{\sigma}(n)$, $\beta_{\sigma}=\beta_{\sigma}(n)$, $\tau_{\sigma}=\tau_{\sigma}(n)$, $s_{\sigma} = s_{\sigma}(n)$ and $S_{\sigma}=S_{\sigma}(n)$, as well as functions $x_{\sigma}(t)$ and $f_{\sigma}(t)$ that are smooth and non-negative for $t \geq 0$. 
For all $0 \leq i^* \leq m$ and $\Sigma \in \cC$, let $\cG_{i^*}(\Sigma)$ denote the event that for every $0 \leq i \leq i^*$ and $\sigma=(\Sigma,j)$ with $j \in \cV$ we have 
\begin{equation}
\label{eq:lem:dem:parameter_trajectory}
X_{\sigma}(i) = \left(x_{\sigma}(t) \pm \frac{f_{\sigma}(t)}{s_{\sigma}} \right) S_{\sigma}   .
\end{equation}
Next, for all $0 \leq i^* \leq m$ let $\cE_{i^*}$ denote the event that for every $0 \leq i \leq i^*$ and $\Sigma \in \cC$ the event $\badEL[i-1] \cup \cG_{i}(\Sigma)$ holds. 
Moreover, assume that we have an event $\cH_i \in \cF_i$ for all $0 \leq i \leq m$ with $\cH_{i+1} \subseteq \cH_{i}$ for all $0 \leq i < m$. 
Finally, suppose that the following conditions hold:  
\begin{enumerate}
\item(Trend hypothesis)
For all $0 \leq i < m$ and $\sigma = (\Sigma,j) \in \cC \times \cV$, whenever $\cE_i \cap \neg\badEL \cap \cH_i$ holds we have 
\begin{equation}
\label{eq:lem:dem:parameter_martingale_property}
\EE\big[Y^{\pm}_{\sigma}(i) \mid \cF_i \big] = \left( y^{\pm}_{\sigma}(t) \pm  \frac{h_{\sigma}(t)}{s_{\sigma}} \right)  \frac{S_{\sigma}}{s}   , 
\end{equation}
where $y_{\sigma}^{\pm}(t)$ and $h_{\sigma}(t)$ are smooth non-negative functions such that
\begin{equation}
\label{eq:lem:dem:derivative}
x'_{\sigma}(t) = y^+_{\sigma}(t) - y^-_{\sigma}(t) \qquad \text{ and } \qquad f_{\sigma}(t) \geq 2\int_{0}^{t} h_{\sigma}(\tau) \ d\tau + \beta_{\sigma}   .
\end{equation}

\item(Boundedness hypothesis) 
For all $0 \leq i < m$ and $\sigma = (\Sigma,j) \in \cC \times \cV$, whenever $\cE_i \cap \neg\badEL \cap \cH_i$ holds we have
\begin{equation}
\label{eq:lem:dem:parameter_max_change}
Y^{\pm}_{\sigma}(i)  \leq \frac{\beta_{\sigma}^2}{s_{\sigma}^2 \lambda_{\sigma} \tau_{\sigma}} \cdot  \frac{S_{\sigma}}{u_{\sigma}}   . 
\end{equation}

\item(Initial conditions) 
For all $\sigma \in \cC \times \cV$ we have 
\begin{equation}
\label{eq:lem:dem:initial_condition}
X_{\sigma}(0) = \left(x_{\sigma}(0) \pm \frac{\beta_{\sigma}}{3s_{\sigma}} \right) S_{\sigma}   .
\end{equation}

\item(Bounded number of configurations and variables) We have 
\begin{equation}
\label{eq:lem:dem:bounded_parameters}
\max\left\{|\cC|,|\cV|\right\} \leq \min_{\sigma \in \cC \times \cV} e^{u_{\sigma}}   .
\end{equation}
 
\item(Additional technical assumptions) 
For all $\sigma \in \cC \times \cV$ we have 
\begin{gather}
\label{eq:lem:dem:technical_assumptions:sm}
s \geq \max\{15 u_{\sigma} \tau_{\sigma} (s_{\sigma}  \lambda_{\sigma}/\beta_{\sigma})^2, 9s_{\sigma} \lambda_{\sigma}/\beta_{\sigma}\}   , \qquad s/(18 s_{\sigma} \lambda_{\sigma}/\beta_{\sigma}) < m \leq s \cdot \tau_{\sigma}/1944   ,\\
\label{eq:lem:dem:technical_assumptions:xy}
\sup_{0 \leq t \leq m/s} y^{\pm }_{\sigma}(t) \leq \lambda_{\sigma}   , \qquad 
\int_0^{m/s} |x''_{\sigma}(t)| \ dt \leq \lambda_{\sigma}   , \\ 
\label{eq:lem:dem:technical_assumptions:h}
h_{\sigma}(0) \leq s_{\sigma} \lambda_{\sigma}  \qquad \text{ and }  \qquad  \int_0^{m/s} |h'_{\sigma}(t)| \ dt \leq s_{\sigma} \lambda_{\sigma}   .
\end{gather}
\end{enumerate}
Then we have 
\begin{equation*}
\label{eq:lem:dem}
\mathbb{P}[\neg\cE_m \cap \cH_m] \leq 4 \max_{\sigma \in \cC \times \cV}e^{-u_{\sigma}}   .
\end{equation*} 
\end{lemma}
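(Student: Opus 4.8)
The plan is to reduce the statement to a single-configuration estimate and then apply a supermartingale argument with a stopping time, following the scheme of \cite{BohmanKeevash2010H, Warnke2010K4}. Fix $\Sigma \in \cC$ and $j \in \cV$, and write $\sigma = (\Sigma, j)$. The first step is to define a stopping time $I_\sigma$ as the minimum of $m$ and the first step $i$ at which one of the following fails: $\cE_i$ holds, $\neg\badEL[i]$ holds, or $\cH_i$ holds. On the event $\neg\cE_m \cap \cH_m$, since $\cH$ is decreasing and $\cE_0$ holds by the initial conditions \eqref{eq:lem:dem:initial_condition}, there is a first step where $\cG_{i}(\Sigma)$ fails for some $\Sigma$ while $\badEL[i-1]$ does not hold and $\cH_i$ holds; so it suffices to bound, for each $\sigma$, the probability that the trajectory equation \eqref{eq:lem:dem:parameter_trajectory} is first violated at some step $i \le I_\sigma$ while all the good events still hold, and then take a union bound over $\cC \times \cV$ using \eqref{eq:lem:dem:bounded_parameters} to absorb the $|\cC||\cV|$ factor into $e^{-u_\sigma}$.

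The core of the argument is to track the one-sided deviations. Define the error function $e_\sigma(t) = x_\sigma(t) + f_\sigma(t)/s_\sigma$ for the upper bound (and symmetrically $x_\sigma(t) - f_\sigma(t)/s_\sigma$ for the lower), and consider the scaled discrepancy
\begin{equation*}
Z^+_\sigma(i) = X_\sigma(i)/S_\sigma - e_\sigma(t(i)) .
\end{equation*}
Using \eqref{eq:lem:dem:rv_relation}, the conditional-expectation estimate \eqref{eq:lem:dem:parameter_martingale_property}, the derivative relation \eqref{eq:lem:dem:derivative} together with $f_\sigma(t) \ge 2\int_0^t h_\sigma(\tau)\,d\tau + \beta_\sigma$, and a Taylor expansion of $e_\sigma$ with the second-derivative control from \eqref{eq:lem:dem:technical_assumptions:xy} and \eqref{eq:lem:dem:technical_assumptions:h}, one shows that $Z^+_\sigma(i)$ is a supermartingale (up to negligible discretization terms of order $\lambda_\sigma/s$, controlled by the lower bounds on $s$ in \eqref{eq:lem:dem:technical_assumptions:sm}) as long as $i \le I_\sigma$ and the relevant events hold. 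The boundedness hypothesis \eqref{eq:lem:dem:parameter_max_change} controls the one-step change $|Z^+_\sigma(i+1) - Z^+_\sigma(i)|$, so a Hoeffding–Azuma / Freedman-type inequality applied to this stopped supermartingale, with the range of summation at most $m \le s\tau_\sigma/1944$ steps, gives that $Z^+_\sigma(i) > 0$ (equivalently, the upper bound in \eqref{eq:lem:dem:parameter_trajectory} is violated) for some $i \le I_\sigma$ with probability at most something like $\exp\{-\Omega(\beta_\sigma^2 u_\sigma \cdot (\text{stuff})^{-1})\}$; here the initial condition \eqref{eq:lem:dem:initial_condition} guarantees $Z^+_\sigma(0) \le -\tfrac{2}{3}\beta_\sigma/s_\sigma < 0$, giving the supermartingale room to absorb fluctuations. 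The technical assumptions in \eqref{eq:lem:dem:technical_assumptions:sm} are precisely calibrated so that this exponent is at least $u_\sigma$, yielding a bound of $e^{-u_\sigma}$ for each of the two one-sided events per $\sigma$.

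Finally, combining the four one-sided failure events (upper and lower, for each $\sigma$) via the union bound over $\cC \times \cV$ and invoking \eqref{eq:lem:dem:bounded_parameters} gives $\PP[\neg\cE_m \cap \cH_m] \le 4\max_\sigma e^{-u_\sigma}$, with the factor $4$ coming from the two signs and a factor-$2$ slack in the martingale tail estimate.

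I expect the main obstacle to be the bookkeeping around the stopping time and the nested events: one must be careful that the trend and boundedness hypotheses \eqref{eq:lem:dem:parameter_martingale_property}–\eqref{eq:lem:dem:parameter_max_change} are only assumed on $\cE_i \cap \neg\badEL[i] \cap \cH_i$, so the supermartingale property and the increment bound only hold \emph{before} the stopping time, and one needs the standard trick of freezing the process at $I_\sigma$ so that the tail inequality applies to an honest (stopped) supermartingale while still giving information about the original event. The other delicate point is verifying that the accumulated error from the Taylor expansion of $e_\sigma(t)$ over all $m$ steps stays below $\beta_\sigma/(3s_\sigma)$, which is where the $\int |x''_\sigma|$, $\int |h'_\sigma|$ and $h_\sigma(0)$ bounds in \eqref{eq:lem:dem:technical_assumptions:xy}–\eqref{eq:lem:dem:technical_assumptions:h} enter, together with the lower bounds on $s$ relative to $s_\sigma\lambda_\sigma/\beta_\sigma$.
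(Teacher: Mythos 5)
This lemma is not proved in the paper at all: it is quoted verbatim from Lemma~5.3 of~\cite{Warnke2010K4}, whose proof (Appendix~A.1 there, following the scheme of~\cite{BohmanKeevash2010H}) is the only thing your sketch can be measured against. Your overall architecture --- stopping time at the first failure of $\cE_i \cap \neg\badEL \cap \cH_i$, stopped supermartingales, union bound absorbed via \eqref{eq:lem:dem:bounded_parameters} --- is the right one, but your decomposition differs from the actual proof in a way that matters. The reference does \emph{not} form a single discrepancy $X_{\sigma}(i)/S_{\sigma}-e_{\sigma}(t)$; it tracks the partial sums $\sum_{j<i}Y^{+}_{\sigma}(j)$ and $\sum_{j<i}Y^{-}_{\sigma}(j)$ separately, building four sub/supermartingales per $\sigma$ (this, not ``two signs plus slack in the tail bound,'' is where the constant $4$ comes from: each of the four fails with probability at most $e^{-3u_{\sigma}}$, and $|\cC||\cV|\le e^{2u_{\sigma}}$ eats two of the three factors). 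The reason for this splitting is quantitative: the Hoeffding-type inequality used there exploits that $\sum_{j<i}Y^{+}_{\sigma}(j)$ minus its drift has \emph{asymmetric} increments --- bounded below by roughly $-\lambda_{\sigma}S_{\sigma}/s$ because $Y^{+}_{\sigma}\ge 0$ and $\sup y^{+}_{\sigma}\le\lambda_{\sigma}$, and above by the boundedness bound $b=\beta_{\sigma}^2S_{\sigma}/(s_{\sigma}^2\lambda_{\sigma}\tau_{\sigma}u_{\sigma})$ --- and the exponent is governed by the product of these two, which is exactly what \eqref{eq:lem:dem:technical_assumptions:sm} is calibrated for. If you track $X_{\sigma}$ directly and apply a plain Azuma--Hoeffding bound with the symmetric increment bound $b$, the exponent degrades by the factor $b/(\lambda_{\sigma}S_{\sigma}/s)\ge 15$ (and in applications far more), and you do not reach $e^{-3u_{\sigma}}$; your route survives only if you genuinely use the conditional-variance (Freedman) form, since $\mathrm{Var}[Y^{+}_{\sigma}-Y^{-}_{\sigma}\mid\cF_i]\le b\cdot\EE[Y^{+}_{\sigma}+Y^{-}_{\sigma}\mid\cF_i]$ recovers the same product. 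You should commit to that version rather than hedging.

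A second, smaller point: your supermartingale claim for $Z^{+}_{\sigma}$ needs the pointwise inequality $f'_{\sigma}(t)\ge 2h_{\sigma}(t)$, whereas \eqref{eq:lem:dem:derivative} only supplies the integral form $f_{\sigma}(t)\ge 2\int_0^t h_{\sigma}+\beta_{\sigma}$. The standard repair (and what the cited proof effectively does) is to run the argument against the envelope $x_{\sigma}(t)\pm\bigl(2\int_0^t h_{\sigma}+\tfrac{2}{3}\beta_{\sigma}\bigr)/s_{\sigma}$, for which the drift comparison is exact, and only at the end invoke the integral inequality to conclude that this envelope sits inside $x_{\sigma}(t)\pm f_{\sigma}(t)/s_{\sigma}$. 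With those two adjustments your outline matches the known proof.
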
 
An important feature of Lemma~\ref{lem:dem} is that the variables in $\cV$ are tracked for every \emph{configuration $\Sigma \in \cC$}. 
However, it only gives approximation guarantees for the variables that `belong' to $\Sigma$ as long as the `local' \emph{bad event} $\badEL$ fails. 
For more details we refer to Section~$5.3$ and Appendix~A.$1$ in~\cite{Warnke2010K4}. 
Here we just remark that if the above conditions $1$--$5$ are satisfied for $n$ large enough, $\cH_m$ holds whp and $u_{\sigma} = \omega(1)$ for all $\sigma \in \cC \times \cV$, then Lemma~\ref{lem:dem} implies that $\cE_m$ holds whp.

\section{Bounding the maximum degree} 
\label{sec:bounding_max_degree}
In this section we prove our main result, namely that whp the maximum degree in the final graph of the $C_{\ell}$-free process is $O((n \log n)^{1/(\ell-1)})$.  
In Sections~\ref{sec:motivation} and~\ref{sec:formal_setup} we first discuss the main proof ideas and introduce the formal setup used. 
Section~\ref{sec:finishing_the_proof} is then devoted to the proof of Theorem~\ref{thm:main_result}, which in turn relies on two involved statements that are proved in subsequent sections.

\subsection{Sketch of the proof} 
\label{sec:motivation}
The following definition plays a crucial role in our proof.  
Given $(\tilde{v},U)$, where $\tilde{v} \in [n]$ and $U \subseteq [n] \setminus \{\tilde{v}\}$, a \emph{$C_{\ell}$-extension for $(\tilde{v},U)$} is a path on $\ell-1$ vertices whose end vertices are in $U$ and whose remaining vertices are disjoint from $U \cup \{\tilde{v}\}$. 
Clearly, for every vertex $\tilde{v} \in [n]$, in the final graph of the $C_{\ell}$-free process $(\tilde{v},\Gamma(\tilde{v}))$ must not have a $C_{\ell}$-extension. 
Set 
\begin{equation}
\label{eq:Cl-parameters2}
\delta = \frac{1}{60^2 \ell! \ell^{\ell}}   , \qquad  \gamma = \max\left\{\frac{3^{\ell+1}}{\delta \mu^{\ell-1}},180\right\}  \qquad \text{and} \qquad u = \gamma n p t_{\max} = \gamma \mu (n\log n)^{1/(\ell-1)}   , 
\end{equation}
again ignoring the irrelevant rounding to integers in the definition of $u$.  
In order to bound the maximum degree by $u = D (n\log n)^{1/(\ell-1)}$, where $D=\gamma \mu$, it is enough to prove that whp every $(\tilde{v},U) \in [n] \times \binom{[n]}{u}$ with $\tilde{v} \notin U$ has at least one $C_{\ell}$-extension after the first $m$ steps. 
The same basic idea was used in~\cite{OsthusTaraz2001}, but our proof takes a different route, inspired by~\cite{Warnke2010K4}. 
After $i$ steps, we denote by $O_{\tilde{v},U}(i)$ the set of open pairs which would complete a $C_{\ell}$-extension for $(\tilde{v},U)$ if chosen as the next edge. 
It seems plausible that it in order prove Theorem~\ref{thm:main_result}, it suffices to show that, after some initial number of steps, $|O_{\tilde{v},U}(i)|$ is always not too small. 
Indeed,  this implies a reasonable probability of completing such an extension in each step, which in turn suggests that the probability of avoiding a $C_{\ell}$-extension in all of the first $m$ steps is very small.

We now illustrate our approach for establishing a good lower bound on $|O_{\tilde{v},U}(i)|$ for the case when $\ell=5$. 
For ease of exposition, we ignore $n^{\epsilon}$ factors whenever these are not crucial and also assume that the number of steps $i$ is large. 
So, in our rough calculations we will e.g.\ ignore whether an edge is open or not, since $|O(i)| = \omega(n^{2-\epsilon})$ by \eqref{eq:Cl-functions-estimates} and \eqref{eq:open-estimate}. 
Note that in this case we have  $p=n^{-3/4}$, $m \approx n^{5/4}$, $|C_{xy}(i)| \approx p^{-1}$ and $|U| \approx np = n^{1/4}$  by \eqref{eq:Cl-parameters}, \eqref{eq:closed-estimate} and \eqref{eq:Cl-parameters2}.

\subsubsection{The random variables used} 
We define $O'_{\tilde{v},U}(i)$ as the set of pairs $xy \in  O_{\tilde{v},U}(i)$ with $x \in U$ and $y \notin U \cup \{\tilde{v}\}$. 
Observe that for every  $xy \in O'_{\tilde{v},U}(i)$ there exists a path $v_0v_1v_2=y$ with $v_{0} \in U \setminus \{x\}$ and $v_1 \notin U \cup \{\tilde{v},x,y\}$, cf.\ Figure~\ref{fig:open:sketch}. 
The `last' edge completing a $C_{5}$-extension for $(\tilde{v},U)$ could be any one of the edges of the path, so we expect that $O'_{\tilde{v},U}(i)$ contains constant proportion of $O_{\tilde{v},U}(i)$.

\begin{figure}[h]
\centering
  \setlength{\unitlength}{1bp}%
  \begin{picture}(83.20, 54.69)(0,0)
  \put(0,0){\includegraphics{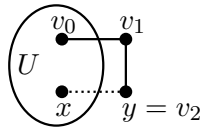}}
  \put(28.72,42.12){\rotatebox{0.00}{\fontsize{11.38}{13.66}\selectfont \smash{\makebox[0pt]{$v_0$}}}}
  \put(15.43,24.94){\fontsize{11.38}{13.66}\selectfont \makebox[0pt]{$U$}}
  \put(28.72,9.52){\rotatebox{0.00}{\fontsize{11.38}{13.66}\selectfont \smash{\makebox[0pt]{$x$}}}}
  \put(54.65,42.12){\rotatebox{0.00}{\fontsize{11.38}{13.66}\selectfont \smash{\makebox[0pt]{$v_1$}}}}
  \put(65.74,9.52){\rotatebox{0.00}{\fontsize{11.38}{13.66}\selectfont \smash{\makebox[0pt]{$y=v_2$}}}}
  \end{picture}\vspace{-0.5em}%
 \caption{\label{fig:open:sketch}A pair $xy \in O'_{\tilde{v},U}(i)$. Solid lines represent edges and dotted lines open pairs.} 
\end{figure}%

Let $Z_{\tilde{v},U}(i)$ contain all quadruples $(v_0,v_1,v_2,v_3) \in U \times [n]^2 \times U$ with $\{v_0v_1,v_1v_2\} \subseteq E(i)$, $v_2v_3 \in O(i)$ and $\{v_1,v_2\} \cap (U \cup \{\tilde{v}\}) = \emptyset$. 
Using random graphs as a guide, we expect that $G(i)$ shares many properties with the binomial random graph $G_{n,p}$, since its edge density is roughly $2tp \approx n^{-3/4} = p$. 
So, given $y$, the expected number of $v_0 \in U$ for which there exists a path $v_0v_1v_{2}=y$ should be roughly $n|U|p^2 = o(1)$. 
Hence on average $xy \in O'_{\tilde{v},U}(i)$ is contained in only one such path ending in $U$, which suggests that up to constants $|Z_{\tilde{v},U}(i)| \approx |O'_{\tilde{v},U}(i)|$. 
To sum up, our discussion indicates that a reasonable lower bound for $|Z_{\tilde{v},U}(i)|$ suffices to prove that $|O_{\tilde{v},U}(i)|$ is large. 
For this we intend to use the differential equation method and so we introduce additional variables in order to control the one-step changes of $|Z_{\tilde{v},U}(i)|$. 
To this end let $Y_{\tilde{v},U}(i)$ be the set of all $(v_0,v_1,v_2,v_3) \in  U \times [n]^2 \times U$ with $\{v_1,v_2\} \cap (U \cup \{\tilde{v}\}) = \emptyset$ that satisfy $v_0v_1 \in E(i)$, $\{v_1v_2,v_2v_3\} \subseteq O(i)$, and, similarly, let $X_{\tilde{v},U}(i)$ contain all such quadruples with $\{v_0v_1,v_1v_2,v_2v_3\} \subseteq O(i)$.

\subsubsection{Technical difficulties}\label{sec:difficulties}
One of the main problems with the approach described above is the bound on the one-step changes. 
It can happen that in one step up to $p^{-1}$ quadruples are removed from $Z_{\tilde{v},U}(i)$, which turns out to be too large for applying the differential equation method directly. 
Indeed, pick $\tilde{v},U$ such that $\{v_0\} \cup \Gamma_i(w) \subseteq U$, $|\Gamma_i(w)| \approx |U|$ and $\tilde{v}  \notin \{w\} \cup U \cup \Gamma_i(U)$; taking the random graph $G_{n,p}$ as a guide, for $e_{i+1}=wv_0$ it is easy to see that about $(np)^2|U| \approx p^{-1}$ quadruples $(v_0,v_1,v_2,v_3)$ with $v_3 \in \Gamma_i(w)$ are removed from $Z_{\tilde{v},U}(i)$. 
For the $C_{4}$-free process this can be resolved using ad-hoc arguments (e.g.\ exploiting that every $v \neq \tilde{v}$ satisfies $|\Gamma_i(v) \cap U| \leq 1$ if no $C_{4}$-extension for $(\tilde{v},U)$ exists), but for larger cycles the situation is more delicate. 
To overcome this issue, we consider a different random variable $T_{\tilde{v},U}(i)$, which is an approximation of $Z_{\tilde{v},U}(i)$ and is defined in such a way that the one-step changes are automatically not too large. Roughly speaking, this can be achieved by `ignoring' the steps where the one-step changes would be too large; similar ideas have been used e.g.\ in~\cite{Bohman2009K3,BohmanKeevash2010H,KimVu2004,Warnke2010K4}. 
Clearly, this introduces a new difficulty: we need to ensure that we do not ignore `too much', so that on the one hand the expected one-step changes are still `correct', and on the other hand $|Z_{\tilde{v},U}(i)| \approx |T_{\tilde{v},U}(i)|$ holds. 
Consequently,  we refine the tracked variables and use more sophisticated rules for ignoring tuples.

There is another significant obstacle when applying the differential equation method: adding $e_{i+1}=v_1v_2$ to $(v_0,v_1,v_2,v_3) \in Y_{\tilde{v},U}(i)$ does \emph{not} always result in an element of $Z_{\tilde{v},U}(i+1)$, since $e_{i+1}=v_1v_2$ closes $v_2v_3$ whenever $v_2v_3 \in C_{v_1v_2}(i)$ holds. 
This is an important difference to the $C_{\ell}$-free process with $\ell \leq 4$, where this does not cause any problems when bounding the maximum degree. 
For example, whenever this happens for $\ell = 4$, it is not difficult to deduce that at least one $C_{4}$-extension for $(\tilde{v},U)$ already exists. 
Returning to the case $\ell=5$, using our random graph intuition we expect that $|Y_{\tilde{v},U}(i)| \approx |U|^2n^2p \approx n^{7/4}$. 
Similar calculations suggest that the expected number of quadruples in $Y_{\tilde{v},U}(i)$ with $v_2v_3 \in C_{v_1v_2}(i)$ should be negligible compared to $|Y_{\tilde{v},U}(i)|$. 
However, if we pick $U$ such that $\Gamma_i(w) \subseteq U$ and $|\Gamma_i(w)| \approx |U|$, for $\tilde{v} \notin \{w\} \cup U \cup \Gamma_i(U)$, it certainly can happen that there are $|U|^2 \cdot np \cdot n  \approx |Y_{\tilde{v},U}(i)|$ quadruples in $Y_{\tilde{v},U}(i)$ with $v_2v_3 \in C_{v_1v_2}(i)$. 
In other words, it is simply \emph{not} true that for all $(\tilde{v},U)$ the effect of these `bad' quadruples is negligible. This is a new difficulty in comparison to the variables tracked in the analysis of the $H$-free process~\cite{BohmanKeevash2010H}. 
To deal with this issue, we substantially refine the tracked random variables, developing ideas used in~\cite{Warnke2010K4}. 
Intuitively, we show that for every $(\tilde{v},U)$ there exists a slightly altered set of random variables where the above extreme example (and other difficulties) can be avoided. 
Here the new `transfer theorem' (Theorem~\ref{thm:transfer:binomial}) is an important ingredient, which allows us to use the \emph{much} more tractable binomial random graph model for certain calculations (see Section~\ref{sec:binomial_results}).

\subsection{Formal setup}
\label{sec:formal_setup}
We now introduce the formal setup used in our argument. 
In the following it is useful to keep in mind that we intend to apply the differential equation method (Lemma~\ref{lem:dem}).

\subsubsection{Preliminaries: neighbourhoods and partitions} 
\label{sec:formal_setup:preliminaries}
Recall that by \eqref{eq:Cl-parameters2} we have $u = \gamma n p t_{\max} = \gamma \mu (n\log n)^{1/(\ell-1)}$. We set
\begin{equation}
\label{eq:Cl-parameters3}
k=u/60 = \gamma/60 \cdot n p t_{\max} = \gamma \mu /60 \cdot  (n\log n)^{1/(\ell-1)}  \qquad \text{ and } \qquad r=\lfloor n/(\ell-3)\rfloor   .
\end{equation} 
Given $X \subseteq [n]$, we partition $\{1, \ldots, (\ell-3)r \} \setminus X$ as follows: for every $1 \leq j \leq \ell-3$ we set 
\begin{equation}
\label{eq:def:Vj}
V_{j}=V_{j}(X) = \{ v \in [n] \setminus X \;:\; (j-1)r < v  \leq j r\} .
\end{equation} 
With a given graph in mind, which will later be $G(i)$ or the binomial random graph, for every $S \subseteq [n]$ we define its \emph{neighbourhoods wrt.\ $X$} as  
\begin{equation*}
\label{eq:def:Nj}
N^{(0)}(S,X) = S \qquad \text{ and } \qquad N^{(j+1)}(S,X) = \Gamma\big(N^{(j)}(S,X)\big) \cap V_{j+1}(X)   ,
\end{equation*}
see also Figure~\ref{fig:neighbourhoods:Lambda}. 
\begin{figure}[t]
\centering
  \setlength{\unitlength}{1bp}%
  \begin{picture}(236.02, 100.79)(0,0)
  \put(0,0){\includegraphics{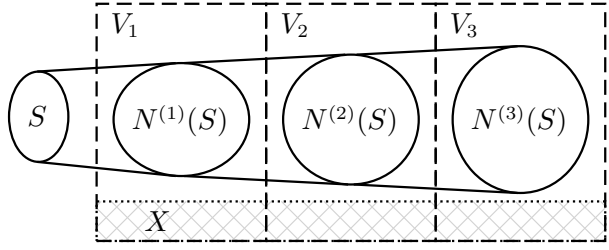}}
  \put(44.12,84.34){\rotatebox{0.00}{\fontsize{10.69}{12.83}\selectfont \smash{\makebox[0pt][l]{$V_1$}}}}
  \put(16.24,48.97){\rotatebox{0.00}{\fontsize{10.69}{12.83}\selectfont \smash{\makebox[0pt]{$S$}}}}
  \put(70.26,47.90){\rotatebox{0.00}{\fontsize{10.69}{12.83}\selectfont \smash{\makebox[0pt]{$N^{(1)}(S)$}}}}
  \put(61.74,9.57){\rotatebox{0.00}{\fontsize{10.69}{12.83}\selectfont \smash{\makebox[0pt]{$X$}}}}
  \put(108.01,84.34){\rotatebox{0.00}{\fontsize{10.69}{12.83}\selectfont \smash{\makebox[0pt][l]{$V_2$}}}}
  \put(134.16,47.90){\rotatebox{0.00}{\fontsize{10.69}{12.83}\selectfont \smash{\makebox[0pt]{$N^{(2)}(S)$}}}}
  \put(171.90,84.34){\rotatebox{0.00}{\fontsize{10.69}{12.83}\selectfont \smash{\makebox[0pt][l]{$V_3$}}}}
  \put(198.05,47.90){\rotatebox{0.00}{\fontsize{10.69}{12.83}\selectfont \smash{\makebox[0pt]{$N^{(3)}(S)$}}}}
  \end{picture}%
  \caption{\label{fig:neighbourhoods:Lambda}The neighbourhoods $N^{(j)}(S) = N^{(j)}(S,X)$ for $j \in [3]$, where $S$ may also intersect with $X$ and the vertex classes, i.e., with  $X \cup V_1 \cup V_2 \cup V_3$. 
Furthermore, $S \cap  N^{(j)}(S) \neq \emptyset$ is also possible.} 
\end{figure}%
Observe that all $N^{(j)}(S,X)$ are disjoint if $S \subseteq X$.  
Furthermore, $X \subseteq Y$ implies 
\begin{equation}
\label{eq:neighbourhood:monotone}
V_{j}(Y) \subseteq V_{j}(X) \qquad \text{ and } \qquad N^{(j)}(S,Y) \subseteq N^{(j)}(S,X)   .
\end{equation}
Finally, for the sake of brevity we define $N^{(\leq j)}(S,X) = \bigcup_{0 \leq j' \leq j} N^{(j')}(S,X)$.

\subsubsection{Configurations} 
\label{sec:main-proof:configs}
We define the set $\cC$ of configurations to be the set of all $\Sigma=(\tilde{v},U,A,B,R)$ with $\tilde{v} \in [n]$, $U \in \binom{[n] \setminus \{\tilde{v}\}}{u}$, disjoint $A,B \in \binom{U}{k}$, and $R \subseteq [n]$ with $\{\tilde{v}\} \cup U \subseteq R$ and $|R| \leq kn^{10\ell\epsilon}$. 
Given $\Sigma \in \cC$, we then set $T_{\Sigma}=A \times V_1 \times \cdots \times V_{\ell-3} \times B$, where each  $V_{j}=V_{j}(R)$ is given by \eqref{eq:def:Vj}. 

Given $\Sigma \in \cC$, distinct $x,y \in [n]$ and $j \in [\ell-1]$,  
let $C_{x,y,\Sigma}(i,j)$ contain all pairs $bw \in B \times N^{(\ell-3)}(A,R)$ for which there exist disjoint paths $b=w_1 \cdots w_{j}=x$ and $y=w_{j+1} \cdots w_{\ell}=w$ in $G(i)$. 
Note that adding $xy$ and $bw$ completes a copy of $C_{\ell}$ containing both $xy$ and $bw$. 
Furthermore, observe that $C_{x,y,\Sigma}(i,j)$ and $C_{y,x,\Sigma}(i,j)$ may differ. 
So, for all $xy \in O(i) \cup C(i)$ we see that the intersection of $C_{xy}(i)$ with $B \times N^{(\ell-3)}(A,R)$ is contained in $\bigcup_{j \in [\ell-1]} \big[ C_{x,y,\Sigma}(i,j) \cup C_{y,x,\Sigma}(i,j) \big]$. 
Finally, note that by monotonicity we have $C_{x,y,\Sigma}(i,j) \subseteq C_{x,y,\Sigma}(i+1,j)$.

\subsubsection{Random variables}
\label{sec:main-proof:variables} 
For every $\Sigma \in \cC$ we track the sizes of several sets throughout the evolution of the $C_{\ell}$-free process. 
For brevity, given $(v_0, \ldots, v_{\ell-2}) \in T_{\Sigma}$, we set $f_{j} = v_{j-1}v_{j}$ for all $1 \leq j \leq \ell-2$. 
For every $0 \leq j \leq \ell-3$ we introduce sets $T_{\Sigma,j}(i)$, which for $0 \leq j < \ell-3$ will satisfy 
\begin{equation}
\label{eq:X:j:inclusion}
T_{\Sigma,j}(i) \subseteq \big\{ (v_0, \ldots, v_{\ell-2}) \in T_{\Sigma} \;:\; \{f_1, \ldots, f_j\} \subseteq E(i) \;\wedge\; \{f_{j+1}, \ldots, f_{\ell-2}\} \subseteq O(i) \big\}   , 
\end{equation}
and for the special case $j = \ell-3$ we will have  
\begin{equation}
\label{eq:X:l3:inclusion}
T_{\Sigma,\ell-3}(i) \subseteq \big\{ (v_0, \ldots, v_{\ell-2}) \in T_{\Sigma} \;:\; \{f_1, \ldots, f_{\ell-3}\} \subseteq E(i) \;\wedge\; f_{\ell-2} \in O(i) \cup C(i) \big\}   ,
\end{equation}
see also Figure~\ref{fig:tuples:T}. Note that $f_{\ell-2}$ can be in $O(i)$ or $C(i)$ for $T_{\Sigma,\ell-3}(i)$, but we will see later that the number of tuples with pairs in $C(i)$ is negligible. 
In the following we define the $T_{\Sigma,j}(i)$ inductively, starting with $T_{\Sigma,j}(0) = \emptyset$ for $j > 0$ and $T_{\Sigma,0}(0) = T_{\Sigma}$. 
Now suppose the process chooses $e_{i+1}=xy \in O(i)$ as the next edge in step $i+1$. 
For $j > 0$ a tuple $(v_0, \ldots, v_{\ell-2}) \in T_{\Sigma,j-1}(i)$ is \emph{added} to $T_{\Sigma,j}(i+1)$, i.e., is in $T_{\Sigma,j}(i+1)$, if $f_j = e_{i+1}$, $\{f_{j+1}, \ldots, f_{\ell-2}\} \cap C_{f_{j}}(i) = \emptyset$, and in $G(i)$ there is no path $w_0 \cdots w_j=v_j$ with $w_0 \in A$.  
Furthermore, for $j < \ell-3$ a tuple $(v_0, \ldots, v_{\ell-2}) \in T_{\Sigma,j}(i)$ is \emph{removed}, i.e., not in $T_{\Sigma,j}(i+1)$, if $e_{i+1} \in \{f_{j+1}, \ldots, f_{\ell-2}\}$ or 
$e_{i+1} \in C_{f_{j+1}}(i) \cup \cdots \cup C_{f_{\ell-2}}(i)$. 
\begin{figure}[t]
	\centering
  \setlength{\unitlength}{1bp}%
  \begin{picture}(114.66, 92.51)(0,0)
  \put(0,0){\includegraphics{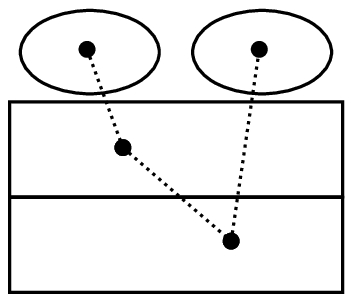}}
  \put(42.35,71.87){\fontsize{11.38}{13.66}\selectfont \makebox[0pt]{$A$}}
  \put(93.56,50.52){\fontsize{11.38}{13.66}\selectfont \makebox[0pt]{$V_1$}}
  \put(20.14,74.04){\fontsize{11.38}{13.66}\selectfont \makebox[0pt]{$v_0$}}
  \put(69.74,74.04){\fontsize{11.38}{13.66}\selectfont \makebox[0pt]{$v_3$}}
  \put(31.44,42.90){\fontsize{11.38}{13.66}\selectfont \makebox[0pt]{$v_1$}}
  \put(62.05,15.41){\fontsize{11.38}{13.66}\selectfont \makebox[0pt]{$v_2$}}
  \put(91.39,71.87){\fontsize{11.38}{13.66}\selectfont \makebox[0pt]{$B$}}
  \put(93.56,22.74){\fontsize{11.38}{13.66}\selectfont \makebox[0pt]{$V_2$}}
  \end{picture}%
\hspace{2.5em}
  \setlength{\unitlength}{1bp}%
  \begin{picture}(114.66, 92.51)(0,0)
  \put(0,0){\includegraphics{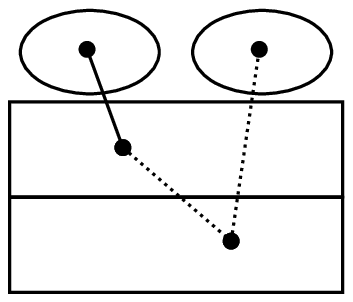}}
  \put(42.35,71.87){\fontsize{11.38}{13.66}\selectfont \makebox[0pt]{$A$}}
  \put(93.56,50.52){\fontsize{11.38}{13.66}\selectfont \makebox[0pt]{$V_1$}}
  \put(20.14,74.04){\fontsize{11.38}{13.66}\selectfont \makebox[0pt]{$v_0$}}
  \put(69.74,74.04){\fontsize{11.38}{13.66}\selectfont \makebox[0pt]{$v_3$}}
  \put(31.44,42.90){\fontsize{11.38}{13.66}\selectfont \makebox[0pt]{$v_1$}}
  \put(62.05,15.41){\fontsize{11.38}{13.66}\selectfont \makebox[0pt]{$v_2$}}
  \put(91.39,71.87){\fontsize{11.38}{13.66}\selectfont \makebox[0pt]{$B$}}
  \put(93.56,22.74){\fontsize{11.38}{13.66}\selectfont \makebox[0pt]{$V_2$}}
  \end{picture}%
\hspace{2.5em}
  \setlength{\unitlength}{1bp}%
  \begin{picture}(114.66, 92.51)(0,0)
  \put(0,0){\includegraphics{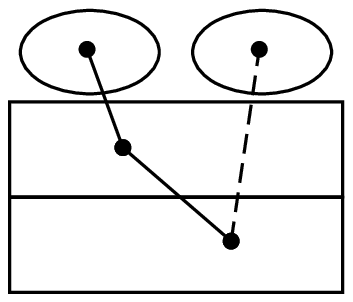}}
  \put(42.35,71.87){\fontsize{11.38}{13.66}\selectfont \makebox[0pt]{$A$}}
  \put(93.56,50.52){\fontsize{11.38}{13.66}\selectfont \makebox[0pt]{$V_1$}}
  \put(20.14,74.04){\fontsize{11.38}{13.66}\selectfont \makebox[0pt]{$v_0$}}
  \put(69.74,74.04){\fontsize{11.38}{13.66}\selectfont \makebox[0pt]{$v_3$}}
  \put(31.44,42.90){\fontsize{11.38}{13.66}\selectfont \makebox[0pt]{$v_1$}}
  \put(62.05,15.41){\fontsize{11.38}{13.66}\selectfont \makebox[0pt]{$v_2$}}
  \put(91.39,71.87){\fontsize{11.38}{13.66}\selectfont \makebox[0pt]{$B$}}
  \put(93.56,22.74){\fontsize{11.38}{13.66}\selectfont \makebox[0pt]{$V_2$}}
  \end{picture}%
	\caption{\label{fig:tuples:T}Tuples $(v_0, v_1,v_2, v_{3})$ in $T_{\Sigma,0}(i)$, $T_{\Sigma,1}(i)$ and $T_{\Sigma,2}(i)$ for $\ell=5$, where $\Sigma=(\tilde{v},U,A,B,R)$. Solid lines represent edges, dotted lines open pairs and dashed lines pairs that are open or closed. For the other pairs there is no restriction, i.e., they may be open, closed or an edge.}
\end{figure}
For the special case $j=\ell-3$, a tuple $(v_0, \ldots, v_{\ell-2}) \in T_{\Sigma,\ell-3}(i)$ is \emph{removed}, i.e., not in $T_{\Sigma,\ell-3}(i+1)$, or \emph{ignored}, i.e., remains in $T_{\Sigma,\ell-3}(i+1)$, according to the following rules:   
\begin{center}
\begin{minipage}[b]{0.95\linewidth}
\begin{description}
\item[Case 1.] If $f_{\ell-2} = e_{i+1}$, then the tuple $(v_0, \ldots, v_{\ell-2})$  is removed,\vspace{-0.2em}
\item[Case 2.] If $e_{i+1} \in C_{f_{\ell-2}}(i)$, then the tuple $(v_0, \ldots, v_{\ell-2})$ is\vspace{-0.5em}
	\begin{enumerate}
		\item[(R2)] removed if there exists $j \in [\ell-1]$ and $x,y \in [n]$ such that $e_{i+1}=xy$, $f_{\ell-2} \in C_{x,y,\Sigma}(i,j)$ and $|C_{x,y,\Sigma}(i,j)| \leq p^{-1} n^{-30\ell\epsilon}$, and\vspace{-0.2em} 
		\item[(I2)] ignored otherwise.\vspace{-0.2em}
	\end{enumerate} 
\end{description}
\end{minipage}
\end{center} 
The above definition clearly satisfies \eqref{eq:X:j:inclusion} and \eqref{eq:X:l3:inclusion}. 
Intuitively, the rules for removing tuples from $T_{\Sigma,\ell-3}(i)$ ensure that the one-step changes are `by definition' not too large. 
Furthermore, the way in which the tuples are added yields the following \emph{extension property $\cU_{T}$}. 
\begin{lemma}
\label{lemma:extension:property}
Given $i \geq 0$, let $\cU_T(i)$ denote the property that for all $\Sigma \in \cC$ and $1 \leq j \leq \ell-3$, for every $(v_{j}, \ldots, v_{\ell-2}) \in V_j \times \cdots V_{\ell-3} \times B$ there exists at most one $(v_0, \ldots, v_{j-1}) \in A \times V_1 \times \cdots \times V_{j-1}$ such that $(v_0, \ldots, v_{\ell-2}) \in \bigcup_{i' \leq i} T_{\Sigma,j}(i')$. 
Then $\cU_T = \cU_T(i)$ holds for every $i \geq 0$. \qed
\end{lemma}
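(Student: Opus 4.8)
The statement $\cU_T(i)$ is a monotone-in-$i$ property (the union $\bigcup_{i' \le i} T_{\Sigma,j}(i')$ only grows), so the natural approach is induction on $i$, with the base case $i=0$ being immediate since $T_{\Sigma,j}(0) = \emptyset$ for $j > 0$. For the inductive step, it suffices to show that whenever a new tuple $(v_0, \ldots, v_{\ell-2})$ enters $T_{\Sigma,j}(i+1)$ (for some $1 \le j \le \ell-3$) that was not already in $\bigcup_{i' \le i} T_{\Sigma,j}(i')$, it does not violate the uniqueness condition — i.e., there was no earlier tuple with the same suffix $(v_j, \ldots, v_{\ell-2})$ but a different prefix $(v_0, \ldots, v_{j-1})$. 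So fix $\Sigma$, fix $j$, and fix a suffix $(v_j, \ldots, v_{\ell-2}) \in V_j \times \cdots \times V_{\ell-3} \times B$, and suppose a tuple with this suffix is added to $T_{\Sigma,j}$ in step $i+1$.

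The key is the phrase in the definition of "added": a tuple enters $T_{\Sigma,j}(i+1)$ only if $f_j = e_{i+1}$ and, crucially, \emph{in $G(i)$ there is no path $w_0 \cdots w_j = v_j$ with $w_0 \in A$}. First I would observe that if $(v_0, \ldots, v_{\ell-2})$ is added at step $i+1$ with this suffix, then in particular $f_1, \ldots, f_{j-1}$ are edges of $G(i)$ (by the inclusion \eqref{eq:X:j:inclusion} applied at level $j-1$, since the tuple was in $T_{\Sigma,j-1}(i)$), and $f_j = v_{j-1}v_j = e_{i+1}$, so $v_0 v_1 \cdots v_j$ is a path in $G(i+1)$ from $v_0 \in A$ to $v_j$. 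Hence after step $i+1$ there \emph{is} a path from $A$ to $v_j$ of length $j$ along $G(i+1)$-edges. Now suppose, for contradiction, that some other tuple $(v_0', \ldots, v_{j-1}', v_j, \ldots, v_{\ell-2})$ with the same suffix and a different prefix lies in $T_{\Sigma,j}(i')$ for some $i' \le i$ (we may take it in $\bigcup_{i'' \le i} T_{\Sigma,j}(i'')$, and since $T_{\Sigma,j}$ is not necessarily monotone I would instead argue at the step $i'$ where that tuple was itself added, $i' \le i$). At the step where that earlier tuple was added, $v_0' v_1' \cdots v_j$ was a path with $v_0' \in A$, and all of $f_1', \ldots, f_{j-1}', f_j'$ were edges of $G$ at that time, hence also edges of $G(i)$ (edges are never removed). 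So at step $i+1$, \emph{before} adding $e_{i+1}$ — i.e., in $G(i)$ — there already exists a path $w_0 \cdots w_j = v_j$ with $w_0 = v_0' \in A$. But that directly contradicts the condition that allowed the new tuple to be added. Therefore no such earlier tuple exists, and uniqueness is preserved.

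The one point requiring care — and the main obstacle — is handling the fact that $T_{\Sigma,j}(i)$ itself is not monotone in $i$ (tuples get removed), so "$(v_0', \ldots) \in \bigcup_{i'' \le i} T_{\Sigma,j}(i'')$" must be converted into a statement about the graph $G(i)$ at the current step. The clean way to do this is to note that whenever a tuple $(v_0', \ldots, v_{\ell-2}')$ ever belonged to $T_{\Sigma,j}(i'')$ for some $i'' \le i$, the edges $f_1', \ldots, f_j'$ were present in $G(i'')$ at the moment it was added to level $j$, and since the process only ever adds edges, these pairs are edges of $G(i)$ as well; thus the path $v_0' \cdots v_j$ persists in $G(i)$. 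This persistence-of-paths observation, combined with the "no $A$-to-$v_j$ path in $G(i)$" gate in the addition rule, is exactly what forces the prefix to be unique. Once this is set up, the induction closes for each $j \in \{1, \ldots, \ell-3\}$ independently, since the suffix lives in $V_j \times \cdots \times V_{\ell-3} \times B$ and the prefix in $A \times V_1 \times \cdots \times V_{j-1}$, matching the statement of the lemma.
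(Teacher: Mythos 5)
Your central mechanism is exactly the one the paper intends: once a tuple with a given $v_j$ enters $T_{\Sigma,j}$, the path $v_0\cdots v_j$ from $A$ persists in all later graphs (edges are never deleted), and the gate ``no path $w_0\cdots w_j=v_j$ with $w_0\in A$ in $G(i)$'' in the addition rule then blocks every later candidate with the same $v_j$. Your handling of the non-monotonicity of $T_{\Sigma,j}(i)$ --- tracing each member of $\bigcup_{i'\le i}T_{\Sigma,j}(i')$ back to the step at which it was added, where its first $j$ pairs were certified as edges --- is also correct.

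However, your inductive step is incomplete: you only rule out a conflict between a tuple added at step $i+1$ and a tuple present at some step $i'\le i$. You do not rule out two \emph{distinct} tuples with the same suffix $(v_j,\ldots,v_{\ell-2})$ both being added at step $i+1$; for such a pair neither is ``earlier'', and the gate is checked against $G(i)$ for both, i.e.\ before either of the two new $A$--$v_j$ paths exists. For $j=1$ this case is vacuous ($v_0$ is forced to be the endpoint of $e_{i+1}$ other than $v_1$), but for $j\ge 2$ the two candidates share $v_{j-1}$ (forced by $f_j=e_{i+1}$) yet could a priori have different $(v_0,\ldots,v_{j-2})$: both lie in $T_{\Sigma,j-1}(i)$ with the same $(v_{j-1},\ldots,v_{\ell-2})$, and nothing in your argument forces their level-$(j-1)$ prefixes to agree. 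Closing this requires invoking the statement at level $j-1$ for the same $i$, namely the uniqueness of the prefix among tuples in $\bigcup_{i'\le i}T_{\Sigma,j-1}(i')$ --- which is why the paper's proof is an induction on $i$ \emph{and} $j$, not on $i$ alone. With that second induction added (base case $j=1$ as above), your argument closes.
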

The proof proceeds by induction on $i$ and $j$; we leave the straightforward details to the reader (it is helpful to observe that after $(v_0, \ldots, v_{\ell-2}) \in T_{\Sigma,j-1}(i)$ is added to $T_{\Sigma,j}(i+1)$, no further tuples containing $v_{j}$ can be added due to the $v_0 \cdots v_{j}$ path). 
Note that by $\cU_T$ every $(v_{j}, \ldots, v_{\ell-2}) \in V_j \times \cdots V_{\ell-3} \times B$ is contained in at most one tuple in $\bigcup_{i' \leq i} T_{\Sigma,j}(i')$. 
This is an important ingredient of our argument, and we remark that a simpler variant of this property has previously been used in~\cite{Warnke2010K4}.

Recall that our goal is to show that there are many open pairs whose addition would complete a $C_{\ell}$-extension for $(\tilde{v},U)$. 
Given $\Sigma=(\tilde{v},U,A,B,R)$, note that for every $(v_0, \ldots, v_{\ell-2}) \in T_{\Sigma,\ell-3}(i)$, if $f_{\ell-2} \in O(i)$, then adding $f_{\ell-2}$ to $G(i)$ would complete such a $C_{\ell}$-extension. 
Now, since $\cU_T$ implies that every pair $f_{\ell-2}=xy$ with $x \in V_{\ell-3}$ and $y \in B$ is contained in at most one such tuple in $T_{\Sigma,\ell-3}(i)$, our aim is to obtain a lower bound on the size of 
\begin{equation}
\label{eq:def:partial_open}
Z_{\Sigma,\ell-3}(i) = \big\{ (v_0, \ldots, v_{\ell-2}) \in T_{\Sigma,\ell-3}(i) \;:\; f_{\ell-2} \in O(i) \big\}   .
\end{equation}

\subsubsection{Bad events } 
\label{sec:main-proof:bad_high_probability_events}
The following bad event $\cB_{i}(\Sigma)$ is crucial for our argument: it addresses the two main technical difficulties outlined in Section~\ref{sec:difficulties}. 
For all  $0 \leq i \leq m$ and $\Sigma \in \cC$ we define $\cB_{i}(\Sigma) = \cB_{1,i}(\Sigma) \cup \cB_{2,i}(\Sigma)$, where 
\begin{flushright}
\begin{minipage}[c]{0.95\linewidth}
\begin{center}
\begin{minipage}[b]{0.95\linewidth}
\begin{enumerate}
\item[$\cB_{1,i}(\Sigma)$ =] in $G(i)$ there are more than $k^2(np)^{\ell-4}n^{-9\epsilon}$ pairs $(b,w) \in B \times N^{(\ell-4)}(A,R)$ for which there exists a path $b=w_0 \cdots w_{\ell-2}=w$, and  \vspace{-0.4em} 
\item[$\cB_{2,i}(\Sigma)$ =] in $G(i)$ we have $|L_{\Sigma}(i)| \geq p^{-1}n^{-1/(2\ell)}$, where $L_{\Sigma}(i)$ contains all $xy \in \binom{[n]}{2}$ with $\max_{j \in [\ell-1]}\{|C_{x,y,\Sigma}(i,j)|,|C_{y,x,\Sigma}(i,j)|\} \geq p^{-1}n^{-30\ell\epsilon}$. 
\end{enumerate} 
\end{minipage}
\end{center}
\end{minipage}
\end{flushright}
Clearly, $\cB_{i}(\Sigma)$ depends only on the first $i$ steps and is increasing, i.e., $\cB_{i}(\Sigma) \subseteq \cB_{i+1}(\Sigma)$ holds. 

We now briefly give some intuition for $\cB_{1,i}(\Sigma)$ and $\cB_{2,i}(\Sigma)$, which are important ingredients for estimating the number of tuples added to $T_{\Sigma,\ell-3}(i+1)$ and removed from $T_{\Sigma,\ell-3}(i)$. 
First, recall that $(v_0, \ldots, v_{\ell-2}) \in T_{\Sigma,\ell-4}(i)$ can not be added to $T_{\Sigma,\ell-3}(i+1)$ if $f_{\ell-2} \in  C_{f_{\ell-3}}(i)$. 
For such `useless' tuples there exists a path $v_{\ell-2}=w_0 \cdots w_{\ell-2}=v_{\ell-4}$ with $(v_{\ell-2},v_{\ell-4}) \in B \times N^{(\ell-4)}(A,R)$ in $G(i)$, and whenever $\neg\cB_{1,i}(\Sigma)$ holds there can not be `too many' such pairs. 
As we shall see, from this we can deduce (using the extension property $\cU_{T}$) that the number of `useless' tuples is small compared to $|T_{\Sigma,\ell-4}(i)|$.  
Second, recall that not all tuples $(v_0, \ldots, v_{\ell-2}) \in T_{\Sigma,\ell-3}(i)$ are removed if $e_{i+1} \in C_{f_{\ell-2}}(i)$: some are are ignored. 
Here the key point is that $e_{i+1} \in C_{f_{\ell-2}}(i) \setminus L_{\Sigma}(i)$ is a sufficient condition for being removed, and, with \eqref{eq:closed-estimate} in mind, that $\neg\cB_{2,i}(\Sigma)$ essentially implies that $|L_{\Sigma}(i)|$ is small compared to $|C_{f_{\ell-2}}(i)|$. 
Intuitively, this will allow us to show that the ignored tuples have negligible impact, i.e., that $|Z_{\Sigma,\ell-3}(i)| \approx |T_{\Sigma,\ell-3}(i)|$.

\subsection{Proof of Theorem~\ref{thm:main_result}} 
\label{sec:finishing_the_proof}
In this section we prove Theorem~\ref{thm:main_result} assuming the following two statements. 
Intuitively, the first lemma ensures that for `good' configurations $\Sigma$ the variables $|T_{\Sigma,j}(i)|$ are dynamically concentrated, and the second lemma essentially guarantees that for every $(\tilde{v},U)$ there exists a good  $\Sigma^*=(\tilde{v},U,A,B,R)$ for which $|T_{\Sigma^*,\ell-3}(i)| \approx |Z_{\Sigma^*,\ell-3}(i)|$. 
Now we give some intuition for the trajectories our variables follow. 
Using \eqref{eq:open-estimate}, we see that the proportion of pairs which are open or an edge in $G(i)$ roughly equals $q(t)$ or $2tp$, respectively, where $t=i/(n^2p)$.  
So, using random graphs as a guide, it seems plausible to expect $|T_{\Sigma,j}(i)| \approx c_j (2tp)^{j}{q(t)}^{\ell-2-j} k^2 r^{\ell-3}$, where the factor $c_j=1/j!$ takes into account that we only count tuples created in a certain order. 
In the following results the functions $q(t)$, $f(t)$ and parameters  $k$, $m$, $p$, $r$, $u$ are defined by  \eqref{eq:Cl-parameters}, \eqref{eq:Cl-functions}, \eqref{eq:Cl-parameters2} and \eqref{eq:Cl-parameters3}. 
\begin{lemma}
\label{lem:dem:trajectories}
For all $0 \leq i^* \leq m$ and $\Sigma \in \cC$, let $\cG_{i^*}(\Sigma)$ denote the event that for every $0 \leq i \leq i^*$ and all $0 \leq j \leq \ell-3$ we have 
\begin{equation}
\label{eq:lem:dem:trajectories:T}
|T_{\Sigma,j}(i)| = \left( (2t)^{j}{q(t)}^{\ell-2-j}/j! \; \pm \; f(t) {q(t)}^{\ell-3-j}/n^{2\epsilon} \right) k^2 r^{\ell-3}p^{j}   ,
\end{equation} 
and let $\cE_{j}$ denote the event that for all $0 \leq i \leq j$ and $\Sigma \in \cC$ the event $\cB_{i-1}(\Sigma) \cup \cG_{i}(\Sigma)$ holds. 
Then $\cE_m$ holds whp in the $C_{\ell}$-free process. 
\end{lemma}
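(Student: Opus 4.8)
\textbf{Proof proposal for Lemma~\ref{lem:dem:trajectories}.}
The plan is to apply the Differential Equation Method (Lemma~\ref{lem:dem}) with $\cC$ the set of configurations, $\cV=\{0,1,\ldots,\ell-3\}$, the filtration $\cF_i$ generated by the first $i$ steps of the $C_\ell$-free process, and the variables $X_{\sigma}(i)=|T_{\Sigma,j}(i)|$ for $\sigma=(\Sigma,j)$. The target trajectory is
\[
x_{\sigma}(t)=\frac{(2t)^{j}q(t)^{\ell-2-j}}{j!}, \qquad S_{\sigma}=k^2 r^{\ell-3}p^{j},
\]
with $f_{\sigma}(t)$ a suitable multiple of $f(t)q(t)^{\ell-3-j}$ and $s_{\sigma}=n^{2\epsilon}$ (after absorbing the $q(t)^{\ell-3-j}$ factor into $f_\sigma$; note $q(t)\geq n^{-\epsilon/4}$ by~\eqref{eq:Cl-functions-estimates} keeps this within the allowed form up to constants). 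The bad events $\badE$ are the $\cB_i(\Sigma)=\cB_{1,i}(\Sigma)\cup\cB_{2,i}(\Sigma)$ of Section~\ref{sec:main-proof:bad_high_probability_events}, and the global good event $\cH_i$ is $\cT_i\cap\cJ_i\cap\cK_i\cap\cL_i$ from Theorem~\ref{thm:BohmanKeevash2010H} and Lemma~\ref{lem:edges_bounded_and_large_degree_bounded}, which holds whp and is decreasing.

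The bulk of the work is verifying the five hypotheses of Lemma~\ref{lem:dem}. First I would write $Y^{\pm}_{\sigma}(i)$ explicitly: $Y^{+}_{\sigma}(i)$ counts tuples added to $T_{\Sigma,j}(i+1)$ and $Y^{-}_{\sigma}(i)$ counts tuples removed (for $j=\ell-3$ the ``ignored'' tuples contribute to neither, which is the whole point of the Case~2 rules). For the trend hypothesis~\eqref{eq:lem:dem:parameter_martingale_property} I would compute $\EE[Y^{\pm}_\sigma(i)\mid\cF_i]$ by summing over the uniformly random choice of $e_{i+1}\in O(i)$: a tuple in $T_{\Sigma,j-1}(i)$ is added with probability roughly $|C_{f_j}(i)|$-free-of-obstructions over $|O(i)|$, and a tuple in $T_{\Sigma,j}(i)$ is removed if $e_{i+1}$ hits one of its remaining open pairs $f_{j+1},\ldots,f_{\ell-2}$ or lands in one of the sets $C_{f_{j'}}(i)$. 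Using~\eqref{eq:open-estimate} for $|O(i)|\sim q(t)n^2/2$ and~\eqref{eq:closed-estimate} for $|C_{xy}(i)|\sim (\ell-1)(2t)^{\ell-2}q(t)p^{-1}$, together with the inductively-assumed concentration~\eqref{eq:lem:dem:trajectories:T} on $\cE_i$, these expectations match $y^{\pm}_{\sigma}(t)S_\sigma/s$ with $s=n^2p$; the identity $x'_\sigma(t)=y^+_\sigma(t)-y^-_\sigma(t)$ is then a routine derivative check on $x_\sigma$. The error terms $h_\sigma(t)/s_\sigma$ absorb: (a) the $3f(t)/s_e$ and $7\ell f(t)/s_e$ slacks from~\eqref{eq:open-estimate}--\eqref{eq:closed-estimate}; (b) the contribution of ``useless'' tuples (those in $T_{\Sigma,\ell-4}(i)$ with $f_{\ell-2}\in C_{f_{\ell-3}}(i)$) which $\neg\cB_{1,i}(\Sigma)$ together with the extension property $\cU_T$ bounds by $k^2(np)^{\ell-4}n^{-9\epsilon}$, negligible against $S_\sigma$; and (c) the ignored tuples in Case~2(I2), which by $\neg\cB_{2,i}(\Sigma)$ number at most $p^{-1}n^{-1/(2\ell)}$ relevant pairs, again negligible against $|C_{f_{\ell-2}}(i)|\sim p^{-1}$ — this is exactly where the two bad events earn their keep, and checking the arithmetic that all three sources of error are swallowed by $h_\sigma$ is the most delicate part.

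For the boundedness hypothesis~\eqref{eq:lem:dem:parameter_max_change} I would bound the one-step change $Y^{\pm}_\sigma(i)$: each edge $e_{i+1}=xy$ closes at most $|C_{xy}(i)|\sim p^{-1}$ pairs and is itself one pair, so for $j<\ell-3$ a change of at most roughly $\max_v|\Gamma_i(v)\cap\text{(class)}|\cdot(\text{stuff})$ tuples per affected pair, controlled via $\cK_i,\cL_i$ from Lemma~\ref{lem:edges_bounded_and_large_degree_bounded}; for $j=\ell-3$ the removal/ignore rules (R2) with the threshold $|C_{x,y,\Sigma}(i,j)|\leq p^{-1}n^{-30\ell\epsilon}$ force the change to be at most $\ell\cdot p^{-1}n^{-30\ell\epsilon}$ per tuple-end times the number of ends, which is where that particular exponent was chosen. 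The initial condition~\eqref{eq:lem:dem:initial_condition} is immediate since $T_{\Sigma,0}(0)=T_\Sigma$ has size exactly $k^2r^{\ell-3}$ and $T_{\Sigma,j}(0)=\emptyset$ matches $x_\sigma(0)=0$ for $j>0$. For~\eqref{eq:lem:dem:bounded_parameters} I would set $u_\sigma=c n^{1/(\ell-1)}$ (up to $\log$ factors and a small constant), since $|\cC|\leq n\cdot\binom{n}{u}\cdot\binom{u}{k}^2\cdot\binom{n}{kn^{10\ell\epsilon}}\leq e^{O(kn^{10\ell\epsilon}\log n)}=e^{o(n^{1/(\ell-1)+11\ell\epsilon})}$, and since $\epsilon\leq 1/(2^{15}\ell^3)$ this is comfortably $e^{u_\sigma}$; then $u_\sigma=\omega(1)$ gives the ``whp'' conclusion via the remark after Lemma~\ref{lem:dem}. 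The technical assumptions~\eqref{eq:lem:dem:technical_assumptions:sm}--\eqref{eq:lem:dem:technical_assumptions:h} reduce to checking that $s=n^2p$ is polynomially larger than the relevant products of $u_\sigma,\tau_\sigma,\lambda_\sigma,\beta_\sigma$ — here $\lambda_\sigma,\tau_\sigma$ are absolute or small-polynomial, $\beta_\sigma$ a small power of $n$, and $m=\mu n^2pt_{\max}$, so these are all one-line polynomial-degree comparisons using $p=n^{-1+1/(\ell-1)}$ and $t_{\max}=\mu(\log n)^{1/(\ell-1)}$, plus the boundedness of the integrals $\int|x''_\sigma|$, $\int|h'_\sigma|$ which follow from smoothness of $q,f$ on the compact range $[0,t_{\max}]$. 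Having verified conditions~1--5, Lemma~\ref{lem:dem} yields $\PP[\neg\cE_m\cap\cH_m]\leq 4\max_\sigma e^{-u_\sigma}=o(1)$, and since $\cH_m$ holds whp, $\cE_m$ holds whp, completing the proof. The main obstacle, as indicated, is the trend-hypothesis bookkeeping: isolating the genuine main term in $\EE[Y^{\pm}_\sigma(i)\mid\cF_i]$ and showing every correction — from the Bohman--Keevash slacks, from useless tuples, from ignored tuples, and from cross-terms among the $C_{f_{j'}}(i)$ — fits inside $h_\sigma(t)/s_\sigma$ with $f_\sigma(t)\geq 2\int_0^t h_\sigma+\beta_\sigma$.
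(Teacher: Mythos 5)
Your plan is essentially the paper's own proof: the same application of Lemma~\ref{lem:dem} with $X_{\sigma}(i)=|T_{\Sigma,j}(i)|$, the same trajectories $x_j(t)=(2t)^jq(t)^{\ell-2-j}/j!$ and scalings $S_j=k^2r^{\ell-3}p^j$, $s_\sigma=n^{2\epsilon}$, the bad events $\cB_{1,i}(\Sigma),\cB_{2,i}(\Sigma)$ playing exactly the roles you assign them (controlling the ``useless'' tuples via $\cU_T$ in the added-tuple count for $j=\ell-3$, and the ignored tuples via $|L_{\Sigma}(i)|\leq p^{-1}n^{-1/(2\ell)}$ in the removed-tuple count), and the same treatment of boundedness, initial conditions and the technical assumptions. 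The one concrete slip is your choice $u_\sigma=cn^{1/(\ell-1)}$: your own estimate $\log|\cC|=\Theta(kn^{10\ell\epsilon}\log n)$ exceeds this, so \eqref{eq:lem:dem:bounded_parameters} fails as stated; the paper takes $u_\sigma=kn^{15\ell\epsilon}$, which still satisfies \eqref{eq:lem:dem:technical_assumptions:sm} since $s=n^{1+1/(\ell-1)}\gg kn^{(15\ell+7)\epsilon}$. (Also, the paper manages with $\cH_i=\cJ_i\cap\cT_i$ only; $\cK_i,\cL_i$ are not needed here, as the boundedness hypothesis follows from $\cU_T$ and the degree bound \eqref{eq:degree-estimate} alone.)
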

\begin{lemma}
\label{lem:dem:config}
Let $\cR_{j}$ denote the event that for all $0 \leq i \leq j$, for every $(\tilde{v},U) \in [n] \times \binom{[n]}{u}$ with $\tilde{v} \notin U$ there exists $\Sigma^*=(\tilde{v},U,A,B,R) \in \cC$ such that $\neg\cB_{i-1}(\Sigma^*)$ holds and 
\begin{equation}
\label{eq:lem:dem:config:ignored}
|T_{\Sigma^*,\ell-3}(i) \setminus Z_{\Sigma^*,\ell-3}(i)| \leq k^2 (rp)^{\ell-3}n^{-9\epsilon}   . 
\end{equation} 
Then $\cR_m$ holds whp in the $C_{\ell}$-free process. 
\end{lemma}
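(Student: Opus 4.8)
The plan is to fix a pair $(\tilde{v},U)$ with $\tilde{v}\notin U$ and to \emph{randomly} select the remaining data $A,B,R$ of the configuration $\Sigma^*=(\tilde{v},U,A,B,R)$, then show that with positive probability (in fact probability bounded away from $0$, or even $1-o(1)$ over an appropriate auxiliary probability space) the resulting $\Sigma^*$ has all three desired properties simultaneously: (i) $\Sigma^*\in\cC$, i.e.\ $A,B$ are disjoint $k$-subsets of $U$ and $\{\tilde v\}\cup U\subseteq R$ with $|R|\le k n^{10\ell\epsilon}$; (ii) $\neg\cB_{i-1}(\Sigma^*)$ holds for every $i\le m$; and (iii) the ``ignored/closed'' tuples bound~\eqref{eq:lem:dem:config:ignored} holds for every $i\le m$. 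Since there are only $n\binom{n}{u}\le 2^n$ pairs $(\tilde v,U)$ and we want $\cR_m$ whp, it suffices to show that for each fixed $(\tilde v,U)$ a good $\Sigma^*$ fails to exist with probability $n^{-\omega(1)}$, and then take a union bound; so really I want the random choice of $(A,B,R)$ to succeed with probability $1-n^{-\omega(1)}$, conditional on the high-probability events $\cT_m\cap\cJ_m\cap\cK_m\cap\cL_m$ and on $\cE_m$ from Lemma~\ref{lem:dem:trajectories}.

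The construction of $R$ is the place where the transfer theorem (Theorem~\ref{thm:transfer:binomial}, promised in Section~\ref{sec:transfer}) enters: $R$ should be an enlargement of $\{\tilde v\}\cup U$ that ``absorbs'' the problematic vertices — roughly, vertices of atypically large degree into $U$ or into the neighbourhood structure around $A$ — so that in $G(i)$ restricted to the classes $V_j(R)$ the relevant neighbourhood counts $N^{(j)}(A,R)$ behave like those in a binomial random graph. Concretely, I would iteratively add to $R$ all vertices witnessing local irregularities (using $\cK_m,\cL_m$ from Lemma~\ref{lem:edges_bounded_and_large_degree_bounded} to control how many such vertices there are at each stage), argue the process stabilises after $O(1)$ rounds, and check the size stays below $kn^{10\ell\epsilon}$. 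Having pinned down $R$ (deterministically, given $(\tilde v,U)$ and the process), I then choose $A$ and $B$ as disjoint uniformly random $k$-subsets of $U$. The point of randomising $A,B$ is that the bad events $\cB_{1,i}(\Sigma)$, $\cB_{2,i}(\Sigma)$ and the closed-tuple count each become, after averaging, a sum over tuples whose expectation is small — here one uses \eqref{eq:closed-estimate}, \eqref{eq:closed-intersection-estimate} and the $\ell$-th moment estimates for paths in $G(i)$ (which themselves come from the transfer theorem applied to $G_{n,p}$, Section~\ref{sec:binomial_results}) to bound $\EE|L_\Sigma(i)|$ and the expected number of ``useless'' pairs $(b,w)\in B\times N^{(\ell-4)}(A,R)$ joined by a path; then Markov, or better a concentration bound (Chernoff/Deletion Lemma, Lemma~\ref{lem:chernoff}/\ref{lem:deletion_lemma}) gives the required $n^{-\omega(1)}$-type tail so that a union bound over the $O(m\cdot\mathrm{poly}(n))$ choices of $i$ and $j$ survives.

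For~\eqref{eq:lem:dem:config:ignored} I would argue as follows. A tuple lies in $T_{\Sigma^*,\ell-3}(i)\setminus Z_{\Sigma^*,\ell-3}(i)$ iff $f_{\ell-2}\in C(i)$, which by the definition of the removal/ignoring rules (Case~2) means $f_{\ell-2}$ was ignored at some step $i'<i$ because $e_{i'+1}\in C_{f_{\ell-2}}(i')\cap L_{\Sigma^*}(i')$, i.e.\ $f_{\ell-2}$ is ``large'' in the sense defining $L_{\Sigma^*}$, or was closed via a large $C_{x,y,\Sigma^*}(i',j)$. Using the extension property $\cU_T$ (Lemma~\ref{lemma:extension:property}), each such $f_{\ell-2}=xy$ with $x\in V_{\ell-3}$, $y\in B$ lies in at most one tuple of $T_{\Sigma^*,\ell-3}(i)$, so the count is at most the number of pairs $xy\in L_{\Sigma^*}(i')$ incident to $B\times V_{\ell-3}$ summed appropriately, which $\neg\cB_{2,i'}(\Sigma^*)$ bounds by $p^{-1}n^{-1/(2\ell)}$ per step — and one has to be careful that summing over the $\le m$ steps and multiplying by the number of completions keeps us below $k^2(rp)^{\ell-3}n^{-9\epsilon}$; the choice of exponents ($30\ell\epsilon$ in the ignoring threshold, $1/(2\ell)$ in $\cB_2$, $\epsilon\le 1/(2^{15}\ell^3)$) is exactly what makes this arithmetic work. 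The main obstacle, I expect, is establishing the binomial-model estimates that feed into both the construction of $R$ and the expectation bounds for $\cB_{1,i}(\Sigma)$ — i.e.\ getting sharp enough control on path-counts and neighbourhood sizes $N^{(j)}(A,R)$ in $G(i)$ via the transfer theorem, uniformly over all $i\le m$ and all the (exponentially many) pairs $(\tilde v,U)$, so that the union bound closes; everything else is bookkeeping with the already-established concentration from Lemma~\ref{lem:dem:trajectories} and the deterministic events of Section~\ref{sec:CL-free}.
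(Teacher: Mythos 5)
There is a genuine gap at the heart of your plan: choosing $A$ and $B$ \emph{uniformly at random} from $U$ and then averaging cannot work for the adversarial pairs $(\tilde v,U)$ that make the lemma hard. The problematic case (spelled out in Section~\ref{sec:difficulties} and in the remarks after Lemmas~\ref{lem:path:endpoints} and~\ref{lem:path:endpoints:pairs}) is when $U$ contains $\Gamma_i(w)$ for a vertex $w$ of near-maximal degree. A uniform random $k$-subset $A\subseteq U$ then contains $\Theta(k)=\Theta(npt_{\max})$ neighbours of $w$ \emph{deterministically}, and likewise for $B$; the paths $b\,w\,a\cdots$ with $b\in B\cap\Gamma_i(w)$, $a\in A\cap\Gamma_i(w)$ already produce $\Theta(k^2(np)^{\ell-4})$ pairs in $B\times N^{(\ell-4)}(A,R)$ joined by a path, so $\cB_{1,i}(\Sigma)$ \emph{holds} and the closed-pair sets $C_{x,y,\Sigma}(i,j)$ are genuinely large. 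The expectation you propose to bound by Markov/Chernoff is simply not small here, so no amount of concentration or union bounding rescues the random choice, and absorbing vertices into $R$ does not help either, since the offending vertices lie in $A$ and $B$ themselves. The paper's proof instead makes a careful \emph{deterministic} choice: it extracts an independent set $S\subseteq U$ (event $\cI$), orders vertices by $|\Gamma(v)\cap S|$, and either (a) places $A$ and $B$ inside the neighbourhoods of a few top vertices $I_A,I_B$, using the bounded-codegree event $\cN$ to guarantee that every \emph{other} vertex has at most $npn^{-\vartheta\epsilon}$ neighbours in $A$ or $B$ (Lemma~\ref{lem:size:neighbourhoods}), or (b) if no $n^{2\vartheta\epsilon}$ vertices dominate $S$, avoids all large neighbourhoods entirely. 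The few exceptional vertices $I_A,I_B$ are then handled by the carve-outs built into $\cP_1$ and $\cP_2$ (the conditions $w_1\notin A$ and the excluded edge set $F$). This dichotomy is the key idea your proposal is missing; without it neither $\neg\cB_{i-1}(\Sigma^*)$ nor \eqref{eq:lem:dem:config:ignored} can be established.

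A secondary issue is your accounting for \eqref{eq:lem:dem:config:ignored}: bounding the ignored tuples by ``$|L_{\Sigma^*}(i')|\le p^{-1}n^{-1/(2\ell)}$ per step, summed over steps'' conflates the number of bad \emph{chosen edges} with the number of tuples they ignore --- a single edge $e_{i'+1}\in L_{\Sigma^*}(i')$ can ignore up to $|C_{e_{i'+1}}(i')|\approx p^{-1}$ tuples at once, so this sum does not close without a further probabilistic argument over the process. The paper avoids this entirely by a static count in $G(i)$: it shows every ignored tuple yields a path $v_{\ell-2}=w_1\cdots w_\ell=v_{\ell-3}$ with $w_1\in B$ and $w_2\in I_B\cup M^{(\ell-2)}(A)$ (via the structural characterisations \eqref{eq:closed:xySigma:i1:inclusion}--\eqref{eq:closed:xySigma:i2:inclusion} of $L_{\Sigma^*}(i,j)$), counts such paths directly using Lemmas~\ref{lem:bound_paths_IB} and~\ref{lem:bound:manypaths}, and injects into pairs $f_{\ell-2}$ via $\cU_T$. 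Note also that the paper needs no union bound over the $\binom{n}{u}$ pairs $(\tilde v,U)$ at this stage: once $\cW_m$ holds, the existence of a good $\Sigma^*$ is a purely deterministic consequence, simultaneously for all $(\tilde v,U)$ --- which is why the decreasing properties $\cP_1,\cP_2$ are formulated with a universal quantifier over $A,B$.
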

The proofs of these lemmas are rather involved and therefore deferred to Sections~\ref{sec:trajectory_verification} and~\ref{sec:good_configurations_exist}. 
With these results in hand, we are now ready to establish our main result. 
\begin{proof}[Proof of Theorem~\ref{thm:main_result}] 
For the sake of concreteness, we prove the theorem with $D=\gamma \mu$. 
Given $\tilde{v} \in [n]$, $U \subseteq [n] \setminus \{\tilde{v}\}$ and $i \leq m$, let $\cX_{\tilde{v},U,i}$ denote the event that up to \mbox{step $i$}, there is no $C_{\ell}$-extension for $(\tilde{v},U)$ in the $C_{\ell}$-free process. 
By $\cX_m$ we denote the event that there exists $(\tilde{v},U) \in [n] \times \binom{[n]}{u}$ with $\tilde{v} \notin U$ for which $\cX_{\tilde{v},U,m}$ holds.  
Furthermore, for every $i \leq m$ we set $\cA_i = \cE_i \cap \cR_i \cap \cT_i$, where $\cT_i$ is defined as in Theorem~\ref{thm:BohmanKeevash2010H} and $\cE_i$, $\cR_i$ as in Lemmas~\ref{lem:dem:trajectories} and~\ref{lem:dem:config}. 
If $\cX_m$ fails, then, as discussed in Section~\ref{sec:motivation}, the $C_{\ell}$-free process has maximum degree at most $u = D (n\log n)^{1/(\ell-1)}$. 
So, since $\cA_m$ holds whp by Theorem~\ref{thm:BohmanKeevash2010H} and Lemmas~\ref{lem:dem:trajectories}  and~\ref{lem:dem:config}, to complete the proof it suffices to show 
\begin{equation}
\label{eq:thm:main_result:prob_no_extension_small}
\PP[\cX_m \cap \cA_m] = o(1)   .
\end{equation}
Suppose that for $m/2 \leq i \leq m$ the event $\cA_i = \cE_i \cap \cR_i \cap \cT_i$ holds. 
Observe that $\cE_i \cap \neg\badEC[i-1]$ implies $\cG_{i}(\Sigma)$, which is defined as in Lemma~\ref{lem:dem:trajectories}. Using \eqref{eq:Cl-parameters} we see that $m/2 \leq i \leq m$ implies $t=i/(n^2p)=\omega(1)$, so for $j=\ell-3$ the main term in the brackets of \eqref{eq:lem:dem:trajectories:T} is $(2t)^{\ell-3}{q(t)}/(\ell-3)!$ since $f(t)/[n^{2\epsilon}q(t)]=o(1)$ by \eqref{eq:Cl-functions-estimates}.  
Thus, whenever $\cE_i \cap \cR_i$ holds, using \eqref{eq:lem:dem:trajectories:T}, \eqref{eq:lem:dem:config:ignored} and $q(t) \geq n^{-\epsilon/4}$, it follows that for every $(\tilde{v},U)$ with $U \in \binom{[n]\setminus\{\tilde{v}\}}{u}$ there exists $\Sigma^*=(\tilde{v},U,A,B,R) \in \cC$ satisfying 
\begin{equation*}
|T_{\Sigma^*,\ell-3}(i)| \geq k^2(2tpr)^{\ell-3}q(t)/(\ell-1)! 
\quad \text{ and } \quad |T_{\Sigma^*,\ell-3}(i) \setminus Z_{\Sigma^*,\ell-3}(i)| \leq k^2 (2tpr)^{\ell-3}q(t)n^{-7\epsilon}   . 
\end{equation*} 
Note that $\cT_i$ gives $q(t) \geq |O(i)|/n^2$ by \eqref{eq:Cl-functions-estimates} and \eqref{eq:open-estimate}. 
So, combining our findings with $Z_{\Sigma^*,\ell-3}(i) \subseteq T_{\Sigma^*,\ell-3}(i)$, using $k = u/60$, $r \geq n/\ell$, \eqref{eq:Cl-parameters2} and $t=i/(n^2p)$ we see that for such $\Sigma^*$ we crudely have 
\begin{equation}
\label{eq:thm:main_result:many_open:tuples}
\begin{split}
|Z_{\Sigma^*,\ell-3}(i)| &=  |T_{\Sigma^*,\ell-3}(i)| - |T_{\Sigma^*,\ell-3}(i) \setminus Z_{\Sigma^*,\ell-3}(i)| \geq k^2(2tpr)^{\ell-3}q(t) / \ell! \\
&\geq \delta u^2(tpn)^{\ell-3} q(t) = \delta \frac{u^{2} i^{\ell-3}}{n^{\ell-3}} q(t) \geq \delta  \frac{u^{2} i^{\ell-3}}{n^{\ell-1}} |O(i)|   . 
\end{split}
\end{equation} 
Recall that $O_{\tilde{v},U}(i) \subseteq O(i)$ denotes the set of open pairs which would complete a $C_{\ell}$-extension for $(\tilde{v},U)$ if chosen as the next edge $e_{i+1}$. 
Let $O_{\Sigma^*}(i)$ be the set of all $xy \in O(i)$ for which there exists $(v_{0}, \ldots, v_{\ell-2}) \in Z_{\Sigma^*,\ell-3}(i)$ with $f_{\ell-2}=xy$. 
As already discussed in Section~\ref{sec:main-proof:variables}, by construction we have $O_{\Sigma^*}(i) \subseteq O_{\tilde{v},U}(i)$, and $\cU_T$ implies $|O_{\Sigma^*}(i)| = |Z_{\Sigma^*,\ell-3}(i)|$. 
Together with \eqref{eq:thm:main_result:many_open:tuples} this establishes 
\begin{equation}
\label{eq:thm:main_result:many_open}
|O_{\tilde{v},U}(i)| \geq \delta  \frac{u^{2} i^{\ell-3}}{n^{\ell-1}} |O(i)|   . 
\end{equation}
Using this estimate, we now prove \eqref{eq:thm:main_result:prob_no_extension_small}. 
To this end fix $(\tilde{v},U) \in [n] \times \binom{[n]}{u}$ with $\tilde{v} \notin U$. We see that 
\begin{equation}
\label{eq:thm:main_result:prob_no_extension_prod}
\begin{split}
\PP[\cX_{\tilde{v},U,m} \cap \cA_m] &= \PP[\cX_{\tilde{v},U,m/2} \cap \cA_{m/2}] \prod_{m/2 \leq i \leq m-1} \PP[\cX_{\tilde{v},U,i+1} \cap \cA_{i+1} \mid \cX_{\tilde{v},U,i} \cap \cA_i]\\
&\leq  \prod_{m/2 \leq i \leq m-1} \PP[e_{i+1} \notin O_{\tilde{v},U}(i) \mid \cX_{\tilde{v},U,i} \cap \cA_i]   .
\end{split}
\end{equation}
Note that $\cX_{\tilde{v},U,i} \cap \cA_i$ depends only on the first $i$ steps of the process, so given this, the process fails to choose $e_{i+1}$ from $O_{\tilde{v},U}(i)$ with probability $1-|O_{\tilde{v},U}(i)|/|O(i)|$. 
Now from \eqref{eq:thm:main_result:many_open} and \eqref{eq:thm:main_result:prob_no_extension_prod} as well as the inequality $1-x \leq e^{-x}$ we deduce, with room to spare, 
\begin{equation}
\label{eq:prob_no_extension_closed}
\PP[\cX_{\tilde{v},U,m} \cap \cA_m] \leq \exp\left\{- \delta  \frac{u^{2} }{n^{\ell-1}} \sum_{m/2 \leq i \leq m-1} i^{\ell-3}\right\} \leq \exp\left\{- \frac{\delta}{2^{\ell}} \frac{u^{2} m^{\ell-2}}{n^{\ell-1}}\right\}   .
\end{equation}
Substituting the definitions of $m$, $u$, $p$ and $t_{\max}$ into \eqref{eq:prob_no_extension_closed} we obtain 
\[
\PP[\cX_{\tilde{v},U,m} \cap \cA_m] \leq \exp\left\{- \frac{\delta \gamma }{2^{\ell}} n^{\ell-2} p^{\ell-1} t_{\max}^{\ell-1}  u \right\} = \exp\left\{- \gamma  \frac{\delta \mu^{\ell-1}}{2^{\ell}} u \log n\right\} \leq n^{- 2 u}   , 
\]
where the last inequality follows from \eqref{eq:Cl-parameters2}, i.e., the definition of $\gamma$. 
Finally, taking the union bound over all choices of $(\tilde{v},U)$ implies \eqref{eq:thm:main_result:prob_no_extension_small}, which, as explained, completes the proof. 
\end{proof}

\section{Trajectory verification} 
\label{sec:trajectory_verification}
This section is devoted to the proof of Lemma~\ref{lem:dem:trajectories}. 
Henceforth we work with the `natural' filtration given by the $C_{\ell}$-free process, where $\cF_i$ corresponds to the first $i$ steps, and  tacitly assume that $n$ is sufficiently large whenever necessary. 
For every $0 \leq i \leq m$ we set $\cH_i = \cJ_i \cap \cT_i$, where $\cJ_i$, $\cT_i$ are defined as in Theorem~\ref{thm:BohmanKeevash2010H}. 
Clearly, $\cH_m$ holds whp. Furthermore $\cH_{i+1} \subseteq \cH_i$ and $\cH_i \in \cF_i$, since $\cH_i$ is monotone decreasing and depends only on the first $i$ steps. 
We set $s=n^2p$ and apply the differential equation method (Lemma~\ref{lem:dem}) with $\cV=\{0, \ldots, \ell-3\}$.  
Recalling that $\badE$ is monotone increasing, we see that $\badE = \badEL$. 
For all $\sigma \in \cC \times \cV$ we define
\begin{equation}
\label{def:dem:parameters}
u_{\sigma} = k n^{15\ell\epsilon} = \omega(1), \qquad \lambda_{\sigma} = \tau_{\sigma} = n^{\epsilon}, \qquad \beta_{\sigma}=1, \qquad \text{ and } \qquad s_{\sigma} = s_{o} = n^{2\epsilon} .
\end{equation}
Formally, for all $\sigma = (\Sigma,j) \in \cC \times \cV$ we set $X_{\sigma}(i) = |T_{\Sigma,j}(i)|$ and $Y^{\pm}_{\sigma}(i) = |T^{\pm}_{\Sigma,j}(i)|$, where $T^+_{\Sigma,j}(i) = T_{\Sigma,j}(i+1) \setminus T_{\Sigma,j}(i)$ and $T^-_{\Sigma,j}(i) = T_{\Sigma,j}(i) \setminus T_{\Sigma,j}(i+1)$. 
But, for the sake of clarity,  we will henceforth just use $|T_{\Sigma,j}(i)|$ and $|T^{\pm}_{\Sigma,j}(i)|$. 
Now, for every $\sigma = (\Sigma,j) \in \cC \times \cV$ we set $x_{\sigma}(t) = x_j(t)$, $y_{\sigma}^{\pm}(t) = x^{\pm}_j(t)$, $S_{\sigma} = S_j$, $f_{\sigma}(t) = f_j(t)$ and $h_{\sigma}(t) = h_j(t)$, where  
\begin{align}
\label{def:xj_Sj}
x_j(t) &= 1/j! \cdot (2t)^{j}{q(t)}^{\ell-2-j}, &  
S_j &= k^2 r^{\ell-3}p^j ,  \\
\label{def:xpj_fj}
x^{+}_j(t) &= 2j/j! \cdot (2t)^{j-1}{q(t)}^{\ell-2-j}, &  
f_j(t) &= f(t) {q(t)}^{\ell-3-j} , \\ 
\label{def:xmj_hj}
x^{-}_j(t) &= 2 (\ell-2-j) (\ell-1) (2t)^{\ell-2}x_j(t),  
& h_j(t) &= f'_j(t)/2   . 
\end{align}
The definition of $ x^{+}_j(t)$ might seem overly complicated, but it conveniently ensures $x^{+}_0(t)=0$ and $x^{+}_{j}(t) = 2 x_{j-1}(t)/q(t)$ for $j > 0$. 
With the above parametrization we can restate \eqref{eq:lem:dem:trajectories:T} as
\begin{equation}
\label{eq:lem:dem:trajectories:T:simplified}
|T_{\Sigma,j}(i)| = \left( x_j(t) \pm f_j(t)/s_{o} \right) k^2 r^{\ell-3}p^{j}   . 
\end{equation}
The remainder of this section is organized as follows. 
First, in Section~\ref{sec:proof:verification:trend} we verify the trend hypothesis of Lemma~\ref{lem:dem}, and, next, the boundedness hypothesis in Section~\ref{sec:proof:verification:bound}. 
Finally, in Section~\ref{sec:proof:verification:end} we check the remaining conditions of  the differential equation method.

\subsection{Trend hypothesis} 
\label{sec:proof:verification:trend}
In order to establish~\eqref{eq:lem:dem:parameter_martingale_property}, whenever $\cE_i \cap \neg\badE \cap \cH_i$ holds, for every $j \in \cV$ we have to prove 
\begin{equation}
\label{eq:Tj:th_goal}
\EE[|T^{\pm}_{\Sigma,j}(i)| \mid \cF_i ] = \left(x^{\pm}_j(t) \pm \frac{h_j(t)}{s_{o}} \right) \frac{k^2 r^{\ell-3}p^{j}}{n^2p}  .
\end{equation}

\subsubsection{Basic estimates}  
\label{sec:pm_inequalities}
The following inequalities were given in~\cite{Warnke2010K4}, and can easily be verified using elementary calculus. 
Recall that $a \pm b$ denotes the interval $\{a + xb : -1 \leq x \leq 1 \}$, see Section~\ref{sec:notation}. 
\begin{lemma}%
\label{lem:pm_inequalities}%
\normalfont{\cite[Lemma~$7.1$]{Warnke2010K4}}
Suppose $0 \leq x \leq 1/2$. Then
\begin{equation}
\label{eq:pm_inverse}
(1 \pm x)^{-1} \subseteq 1 \pm 2 x    . 
\end{equation}
\end{lemma} 
\begin{lemma}%
\label{lem:pm_product_inequality_extended}%
\normalfont{\cite[Lemma~$7.2$]{Warnke2010K4}}
Suppose $x,y, f_x, f_y, g, h \geq 0$ and $g \leq 1$. 
Then $f_x + x g\leq h/2$ implies 
\begin{equation}
\label{eq:pm_product_ext1}
(1 \pm g) ( x \pm f_{x} ) \subseteq x \pm h   . 
\end{equation}
Furthermore, $x f_y + y f_x + f_x f_y + x y g \leq h/2$ implies  
\begin{equation}
\label{eq:pm_product_ext2}
(1 \pm g) ( x \pm f_{x} ) ( y \pm f_{y} )  \subseteq xy \pm h   . 
\end{equation}
\end{lemma}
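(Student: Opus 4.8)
\textbf{Proof proposal for Lemma~\ref{lem:pm_product_inequality_extended}.}

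The plan is to reduce everything to elementary interval arithmetic, working from the definition of the $\pm$ notation introduced in Section~\ref{sec:notation}: recall that $a \pm b$ denotes the set $\{a + xb : -1 \le x \le 1\}$. First I would dispose of \eqref{eq:pm_product_ext1}. Pick an arbitrary element of the left-hand side; it has the form $(1 + \theta g)(x + \phi f_x)$ with $\theta, \phi \in [-1,1]$. Expanding gives $x + \phi f_x + \theta g x + \theta\phi g f_x$, so the distance from $x$ is at most $f_x + gx + g f_x$ (using $|\theta|,|\phi| \le 1$ and non-negativity of all quantities). Since $g \le 1$ we have $g f_x \le f_x$, hence this distance is at most $2f_x + gx$; wait — to be economical I would instead bound $g f_x \le g x$ only if $f_x \le x$, which need not hold, so the clean bound to use is simply $f_x + gx + g f_x$, and then invoke the hypothesis. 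Actually the hypothesis reads $f_x + xg \le h/2$, so I need $f_x + gx + g f_x \le h$; since $g \le 1$, $g f_x \le f_x \le f_x + gx \le h/2$, and adding the bound $f_x + gx \le h/2$ for the first two terms yields the total $\le h$. Thus every element lies in $x \pm h$, proving \eqref{eq:pm_product_ext1}.

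For \eqref{eq:pm_product_ext2} I would proceed the same way but with one more factor. An arbitrary element is $(1 + \theta g)(x + \phi f_x)(y + \psi f_y)$ with $\theta,\phi,\psi \in [-1,1]$. Expanding $(x + \phi f_x)(y + \psi f_y) = xy + \psi x f_y + \phi y f_x + \phi\psi f_x f_y$, so this product lies in $xy \pm (x f_y + y f_x + f_x f_y)$. Multiplying by $(1 + \theta g)$ and using the already-established \eqref{eq:pm_product_ext1} with $x$ replaced by $xy$ and $f_x$ replaced by $xf_y + yf_x + f_xf_y$: the precondition needed is $(xf_y + yf_x + f_xf_y) + (xy)g \le h/2$, which is exactly the hypothesis $x f_y + y f_x + f_x f_y + xyg \le h/2$. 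Hence $(1\pm g)(xy \pm (xf_y+yf_x+f_xf_y)) \subseteq xy \pm h$, which is what we want.

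The argument is entirely routine; there is no real obstacle, only bookkeeping with the convention that multiple occurrences of $\pm$ are treated as independent (so the three sign-parameters $\theta,\phi,\psi$ range independently over $[-1,1]$), and the care needed to apply the hypotheses with the correct groupings. The only mild subtlety is making sure that in \eqref{eq:pm_product_ext2} the intermediate interval $xy \pm (xf_y + yf_x + f_xf_y)$ is centered at $xy$ with a \emph{non-negative} half-width, so that \eqref{eq:pm_product_ext1} genuinely applies; this holds since $x,y,f_x,f_y \ge 0$. Consequently I would present the proof in two short paragraphs, establishing \eqref{eq:pm_product_ext1} directly and then deriving \eqref{eq:pm_product_ext2} by applying \eqref{eq:pm_product_ext1} to the product of the last two factors. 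For completeness one may also note that Lemma~\ref{lem:pm_inequalities} is proved analogously: for $0 \le x \le 1/2$ and any $t \in 1 \pm x$ one has $t^{-1} \in [1/(1+x), 1/(1-x)] \subseteq [1 - x, 1 + 2x] \subseteq 1 \pm 2x$, using $1/(1-x) \le 1 + 2x$ for $x \le 1/2$.
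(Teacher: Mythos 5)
Your proof is correct: the paper itself omits the argument, citing \cite{Warnke2010K4} and noting the inequalities "can easily be verified using elementary calculus," and your direct interval-arithmetic verification (bounding the deviation term by term for \eqref{eq:pm_product_ext1}, then deriving \eqref{eq:pm_product_ext2} by applying \eqref{eq:pm_product_ext1} with $x \mapsto xy$ and $f_x \mapsto xf_y + yf_x + f_xf_y$) is exactly the intended routine argument. The one subtlety you flag — that the intermediate half-width is non-negative so \eqref{eq:pm_product_ext1} applies — is handled correctly.
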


\subsubsection{Triples added in one step.} 
\label{sec:proof:verification:trend:added}
In this section we verify \eqref{eq:Tj:th_goal} for $T^{+}_{\Sigma,j}(i)$.

\textbf{The case $j=0$.} 
Clearly, adding an edge to $G(i)$ can not create new open tuples in $T_{\Sigma,0}(i)$. Thus we always have $|T^+_{\Sigma,0}(i)| = 0 = x^+_0(t)$, which settles this case.

\textbf{The case $j > 0$.} 
Recall that $e_{i+1} \in O(i)$ is added to $G(i)$. 
Let $P_{\Sigma,j-1}(i)$ contain all $(v_0, \ldots, v_{\ell-2}) \in T_{\Sigma,j-1}(i)$ for which there exists a path $w_0 \ldots w_j=v_j$ with $w_0 \in A$ in $G(i)$. 
Similarly, $D_{\Sigma,j-1}(i) \subseteq T_{\Sigma,j-1}(i)$ contains all tuples with $\{f_{j+1}, \ldots, f_{\ell-2}\} \cap C_{f_{j}}(i) \neq \emptyset$, where $f_{j'} = v_{j'-1}v_{j'}$. 
With these definitions in hand, 
note that $(v_0, \ldots, v_{\ell-2}) \in T_{\Sigma,j-1}(i)$ is added to $T_{\Sigma,j}(i+1)$, i.e., is in $T_{\Sigma,j}(i+1)$, if and only if $f_j = e_{i+1}$ and $(v_0, \ldots, v_{\ell-2}) \notin P_{\Sigma,j-1}(i) \cup D_{\Sigma,j-1}(i)$, see Section~\ref{sec:main-proof:variables}. 
Since the $C_{\ell}$-free process chooses $e_{i+1}$ uniformly at random from 
$O(i)$, 
whenever $\cE_i \cap \neg\badE \cap \cH_i$ holds we have 
\begin{equation}
\label{eq:Tj:th+_0}
\EE[|T^+_{\Sigma,j}(i)| \mid \cF_i ] = \sum_{(v_0, \ldots, v_{\ell-2}) \in T_{\Sigma,j-1}(i) \setminus [P_{\Sigma,j-1}(i) \cup D_{\Sigma,j-1}(i)]} \frac{1}{|O(i)|}   . 
\end{equation}
We now bound the size of $P_{\Sigma,j-1}(i)$. 
Since $\cH_i$ implies \eqref{eq:degree-estimate}, the degree of every vertex is bounded by, say, $npn^{\epsilon}$. 
So, using $|A| = k \leq npn^{\epsilon}$, $j \leq \ell-3$, $(np)^{\ell-2}=n^{1- 1/(\ell-1)}$ and $r \geq n/\ell$, in $G(i)$ the number of $w_j$ for which there exists a path $w_0 \ldots w_j$ with $w_0 \in A$ is at most 
\begin{equation}
\label{eq:Tj:th:ineq}
|A| \cdot (npn^{\epsilon})^{j} \leq (npn^{\epsilon})^{\ell-2} \leq n^{1+\ell\epsilon - 1/(\ell-1)} \leq r  n^{-1/(2\ell)}   . 
\end{equation}
Given $w_j$, we now bound the number of $(v_0, \ldots, v_{\ell-2}) \in T_{\Sigma,j-1}(i)$ with $w_j=v_j$. 
Observe that there are at most $k (npn^{\epsilon})^{j-1}$ choices for such $v_1, \ldots, v_{j-1}$, and at most $r^{\ell-j-3}k$ choices for $v_{j+1}, \ldots, v_{\ell-2}$. 
Putting things together, we deduce that 
\begin{equation}
\label{eq:Tj:th:P_Sigma_size}
|P_{\Sigma,j-1}(i)| \leq r  n^{-1/(2\ell)} \cdot k (npn^{\epsilon})^{j-1} \cdot r^{\ell-j-3} k \leq k^2 r^{\ell-3} p^{j-1} n^{-1/(3\ell)}   . 
\end{equation}
Turning to $D_{\Sigma,j-1}(i)$, we first consider the case where $0 < j < \ell-3$. \marginal{Recheck if correct!}
Suppose that $f_h \in C_{f_{j}}(i)$. 
Depending on whether $h=j+1$ or $h > j+1$, there exists either a path $v_{j-1}=w_1 \cdots w_{\ell-1}=v_h$ with $j < h < \ell-2$, or a path $w_1 \cdots w_{\kappa}=v_{h-1}$ with $w_1 \in \{v_j,v_{j-1}\}$, $1 < \kappa \leq \ell-2$ and $j < h-1<\ell-2$, cf.\ Figure~\ref{fig:tuples:selfclosing}. 
\begin{figure}[t]
	\centering
  \setlength{\unitlength}{1bp}%
  \begin{picture}(60.00, 50.00)(0,0)
  \put(0,-2.75){\includegraphics{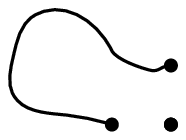}}
  \put(22.67,8.12){\fontsize{11.38}{13.66}\selectfont $v_{h}$}
  \put(41.19,36.46){\fontsize{11.38}{13.66}\selectfont $v_{j-1}$}
  \put(41.19,8.12){\fontsize{11.38}{13.66}\selectfont $v_j\!=\!v_{h-1}$}
\end{picture}%
\hspace{5.5em}
  \setlength{\unitlength}{1bp}%
  \begin{picture}(82.00, 50.00)(0,0)
  \put(0,0){\includegraphics{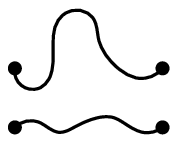}}
  \put(6.67,36.46){\fontsize{11.38}{13.66}\selectfont $v_h$}
  \put(6.67,8.12){\fontsize{11.38}{13.66}\selectfont $v_{h-1}$}
  \put(49.19,36.46){\fontsize{11.38}{13.66}\selectfont $v_{j-1}$}
  \put(49.19,8.12){\fontsize{11.38}{13.66}\selectfont $v_{j}$}
  \end{picture}%
\hspace{3.0em}
  \setlength{\unitlength}{1bp}%
  \begin{picture}(82.00, 50.00)(0,0)
  \put(0,0){\includegraphics{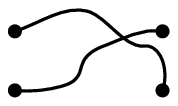}}
  \put(6.67,36.46){\fontsize{11.38}{13.66}\selectfont $v_h$}
  \put(6.67,8.12){\fontsize{11.38}{13.66}\selectfont $v_{h-1}$}
  \put(49.19,36.46){\fontsize{11.38}{13.66}\selectfont $v_{j-1}$}
  \put(49.19,8.12){\fontsize{11.38}{13.66}\selectfont $v_{j}$}
  \end{picture}%
	\caption{\label{fig:tuples:selfclosing}The solid lines represent paths such that adding both $f_{j}=v_{j-1}v_{j}$ and $f_{h}=v_{h-1}v_{h}$ completes a copy of $C_{\ell}$ consisting of those paths. In other words, adding $f_{j}$ closes $f_{h}$, i.e., $f_{h} \in C_{f_{j}}(i)$.}
\end{figure} 
So, in both cases, there exists a path $w_1 \cdots w_{\kappa}=v_{x}$ with $w_1 \in \{v_j,v_{j-1}\}$, $1 < \kappa \leq \ell-1$ and $j < x < \ell-2$. 
With this observations in hand, we are now ready to estimate the number of tuples $(v_{0}, \ldots, v_{\ell-2}) \in  D_{\Sigma,j-1}(i)$. 
Recall that by $\cH_i$ the degree of every vertex is at most $npn^{\epsilon}$. 
It follows that there are at most $k (npn^{\epsilon})^{j-1}r$ choices for $v_0, \ldots, v_{j}$, and at most $\ell^2$ choices for $h$ and $x$. 
Given $v_0, \ldots, v_{j}$ as well as $h$ and $x$, there are at most $2\ell(npn^{\epsilon})^{\ell-2} \leq r  n^{-1/(3\ell)}$ choices for $v_{x}$ by \eqref{eq:Tj:th:ineq}. 
Since we already picked $v_x$ with $j < x < \ell-2$, for the remaining vertices among $v_{j+1}, \ldots, v_{\ell-2}$ we have at most $r^{\ell-j-4}k$ choices. 
Putting things together, we see that for $0 < j < \ell-3$ we have 
\begin{equation}
\label{eq:Tj:th:D_Sigma_size}
|D_{\Sigma,j-1}(i)| \leq  k (npn^{\epsilon})^{j-1} \cdot r \cdot \ell^2 \cdot r n^{-1/(3\ell)} \cdot r^{\ell-j-4}k \leq  k^2 r^{\ell-3} p^{j-1} n^{-1/(4\ell)}   . 
\end{equation}
Now we bound $|D_{\Sigma,j-1}(i)|$ for the remaining case $j = \ell-3$. 
Recall that $f_{\ell-3}= v_{\ell-4}v_{\ell-3}$. 
If $f_{\ell-2}=v_{\ell-3}v_{\ell-2} \in C_{f_{\ell-3}}(i)$, then, with a similar reasoning as in the previous case, there exists a path $v_{\ell-4}=w_{0} \cdots  w_{\ell-2}=v_{\ell-2}$, where $v_{\ell-4} \in N^{(\ell-4)}(A,R)$ and $v_{\ell-2} \in B$.
Since $\neg\badE$ holds, by $\neg\badE[1,i]$ there are at most $k^2(np)^{\ell-4}n^{-9\epsilon}$ such pairs $(v_{\ell-2}, v_{\ell-4}) \in B \times N^{(\ell-4)}(A,R)$ in $G(i)$. 
Recall that by the extension property $\cU_{T}$ (cf.\ Lemma~\ref{lemma:extension:property}) every triple $(v_{\ell-4},v_{\ell-3}, v_{\ell-2})$ is contained in at most one tuple in $T_{\Sigma,\ell-4}(i)$. 
So, since there are at most $k^2(np)^{\ell-4}n^{-9\epsilon}$ choices for $v_{\ell-4},v_{\ell-2}$, and at most $r$ choices for $v_{\ell-3} \in V_{\ell-3}$, using $\cU_{T}$ we deduce that for $j=\ell-3$ we have 
\begin{equation}
\label{eq:Tj:th:D_Sigma_size_l3}
|D_{\Sigma,j-1}(i)| \leq k^2(np)^{\ell-4}n^{-9\epsilon} \cdot r \leq  k^2 r^{\ell-3} p^{\ell-4} n^{-8\epsilon} = k^2 r^{\ell-3} p^{j-1} n^{-8\epsilon}   . 
\end{equation}
After these preparations, we now estimate \eqref{eq:Tj:th+_0} whenever $\cE_i \cap \neg\badE \cap \cH_i$ holds. 
Observe that $\cE_i \cap \neg\badE$ implies $\cG_{i}(\Sigma)$, and so $|T_{\Sigma,j-1}(i)|$ satisfies  \eqref{eq:lem:dem:trajectories:T:simplified}. 
Furthermore, since $\cH_i$ holds, this implies that $|O(i)|$ satisfies \eqref{eq:open-estimate}. 
In addition, note that $s_{e} = n^{1/(2\ell)-\epsilon}$ and \eqref{eq:Cl-functions-estimates} imply $f(t)/s_{e} = o(1)$ and $f_{j-1}(t) \geq 1$. 
Substituting the former estimates and \eqref{eq:Tj:th:P_Sigma_size}--\eqref{eq:Tj:th:D_Sigma_size_l3} into \eqref{eq:Tj:th+_0}, using $n^{1/(3\ell)} \geq n^{8\epsilon} = \omega(s_{o})$, \eqref{eq:pm_inverse}, $x^{+}_{j}(t) = 2 x_{j-1}(t)/q(t)$ and $f_{j}(t) = f_{j-1}(t) / q(t)$, we deduce that 
\begin{equation*}
\label{eq:Tj:th+_1}
\begin{split}
\EE[|T^+_{\Sigma,j}(i)| \mid \cF_i ] &= \frac{( x_{j-1}(t) \pm f_{j-1}(t)/s_{o}) k^2 r^{\ell-3}p^{j-1} \pm 2k^2 r^{\ell-3} p^{j-1} n^{-8\epsilon}}{(1 \pm 3f(t)/s_e)q(t)n^2/2}\\
& \subseteq \frac{( x_{j-1}(t) \pm 2f_{j-1}(t)/s_{o}) k^2 r^{\ell-3}p^{j-1}}{(1 \pm 3f(t)/s_e)q(t)n^2/2}\\
& \subseteq (1 \pm 6f(t)/s_e) \cdot (x_{j}^{+}(t) \pm 4f_{j}(t)/s_{o}) \cdot k^2 r^{\ell-3}p^{j}/(n^2p)   . 
\end{split}
\end{equation*}
Therefore the desired bound, i.e., \eqref{eq:Tj:th_goal} for $T^{+}_{\Sigma,j}(i)$, follows if 
\begin{equation}
\label{eq:Tj:th+_sufficient_goal}
(1 \pm 6f(t)/s_e) \cdot (x_{j}^{+}(t) \pm 4f_{j}(t)/s_{o}) \subseteq x_{j}^{+}(t) \pm h_j(t)/s_{o}   .
\end{equation}
Now, using $f(t)=o(s_e)$ and Lemma~\ref{lem:pm_product_inequality_extended}, by writing down the assumptions of \eqref{eq:pm_product_ext1} and  multiplying both sides with $2 s_{o}$, observe that \eqref{eq:Tj:th+_sufficient_goal} follows from 
\begin{equation*}
\label{eq:Tj:th+__sufficient}
8 f_{j}(t) + 12 x_{j}^{+}(t)f(t) s_{o}/s_{e} \leq h_j(t)   . 
\end{equation*}
Using \eqref{eq:Cl-functions-estimates} and \eqref{def:xpj_fj} we see that the second term on the left hand side is $o(1)$. So, it suffices if 
\[
8 f_j(t) + 1 \leq h_j(t)   ,
\]
which is easily seen to be true, since $h_j(t) \geq W/4 \cdot (f_j(t) + 1)$ and $W \geq 50$ by \eqref{eq:Cl-constants:Wepsmu}, \eqref{eq:Cl-functions-estimates} and \eqref{def:xmj_hj}.

\subsubsection{Triples removed in one step} 
\label{sec:proof:verification:trend:removed}
Next, we prove \eqref{eq:Tj:th_goal} for $T^{-}_{\Sigma,j}(i)$.  
Since the rules for removing tuples from $T_{\Sigma,j}(i)$ are different for $j < \ell-3$ and $j = \ell-3$, we use a case distinction.

\textbf{The case $j < \ell-3$.} 
Recall that a tuple $(v_0, \ldots, v_{\ell-2}) \in T_{\Sigma,j}(i)$ is removed, i.e., not in $T_{\Sigma,j}(i+1)$, if $e_{i+1} \in \{f_{j+1}, \ldots, f_{\ell-2}\}$ or $e_{i+1} \in C_{f_{j+1}}(i) \cup \cdots \cup C_{f_{\ell-2}}(i)$. 
Since the edge $e_{i+1}$ is chosen uniformly at random from $O(i)$,  whenever $\cE_i \cap \neg\badE \cap \cH_i$ holds, using $|\{f_{j+1}, \ldots, f_{\ell-2}\}| \leq \ell$ we have 
\begin{equation}
\label{eq:Tj:th-_0}
\EE[|T^-_{\Sigma,j}(i)| \mid \cF_i ] = \sum_{(v_0, \ldots, v_{\ell-2}) \in T_{\Sigma,j}(i)} \frac{|C_{f_{j+1}}(i) \cup \cdots \cup C_{f_{\ell-2}}(i)| \pm \ell}{|O(i)|}    .
\end{equation}
Note that $\cH_i$ implies that the inequalities \eqref{eq:open-estimate}, \eqref{eq:closed-estimate} and \eqref{eq:closed-intersection-estimate} hold. 
In particular, using $n^{1/\ell} = \omega(s_e)$, $n^{-1/\ell} p^{-1} = \omega(1)$ and $f(t) \geq 1$, this yields 
\begin{equation}
\label{eq:Tj:th-closed}
\begin{split}
|C_{f_{j+1}}(i) \cup \cdots \cup C_{f_{\ell-2}}(i)| \pm \ell &\subseteq (\ell-j-2)[(\ell-1)(2t)^{\ell-2} q(t) \pm 7\ell f(t)/s_e]p^{-1} \pm \ell^2 n^{-1/\ell} p^{-1} \pm \ell\\
& \subseteq (\ell-j-2)[(\ell-1)(2t)^{\ell-2}q(t) \pm 9\ell f(t)/s_e]p^{-1}   . 
\end{split}
\end{equation} 
Since $\cE_i \cap \neg\badE$ implies $\cG_{i}(\Sigma)$, it follows that $|T_{\Sigma,j}(i)|$ satisfies \eqref{eq:lem:dem:trajectories:T:simplified}. 
In addition, as in Section~\ref{sec:proof:verification:trend:added}, $f(t)/s_e = o(1)$ holds and $|O(i)|$ satisfies \eqref{eq:open-estimate} by $\cH_i$. 
Substituting the former estimates into \eqref{eq:Tj:th-_0}, and using \eqref{eq:pm_inverse} as well as $x^{-}_{j}(t)/x_{j}(t) = 2 (\ell-j-2) (\ell-1) (2t)^{\ell-2}$, we obtain
\begin{equation*}
\label{eq:Tj:th-_1}
\begin{split}
& \EE[|T^-_{\Sigma,j}(i)| \mid \cF_i ] = \frac{( x_{j}(t)	 \pm f_j(t)/s_{o}) k^2 r^{\ell-3}p^{j} \cdot  (\ell-j-2)[ (\ell-1) (2t)^{\ell-2} q(t) \pm 9\ell f(t)/s_e]p^{-1}}{(1 \pm 3f(t)/s_e)q(t)n^2/2}\\
& \qquad \subseteq  (1 \pm 6f(t)/s_e ) \cdot ( x_{j}(t) \pm f_j(t)/s_{o} ) \cdot [x^{-}_{j}(t)/x_{j}(t) \pm 20 \ell^2 f(t)/({q(t)}s_e) ] \cdot k^2 r^{\ell-3}p^{j}/(n^2p)   . 
\end{split}
\end{equation*}
Therefore the desired bound, i.e., \eqref{eq:Tj:th_goal} for $T^{-}_{\Sigma,j}(i)$, follows if 
\begin{equation}
\label{eq:open_triples:th_sufficient_goal}
(1 \pm 6f(t)/s_e ) \cdot ( x_{j}(t) \pm f_j(t)/s_{o} ) \cdot [ x^{-}_{j}(t)/x_{j}(t) \pm 20 \ell^2 f(t)/({q(t)}s_e) ] \subseteq x_j^-(t) \pm h_j(t)/s_{o}   .
\end{equation}
We now show \eqref{eq:open_triples:th_sufficient_goal} using Lemma~\ref{lem:pm_product_inequality_extended}. 
Similar as for the added tuples, by writing down the assumptions of \eqref{eq:pm_product_ext2}, multiplying with $2 s_{o}$ and then noticing that all terms containing $s_{e}$ contribute $o(1)$, we see that 
it suffices if 
\[
(\ell-2-j)(\ell-1)2^{\ell} t^{\ell-2} f_j(t) + 1 \leq h_j(t) ,
\]
which is easily seen to be true, since $h_j(t) \geq W/2 \cdot ( t^{\ell-2} f_j(t) + 1)$ and $W/2 \geq \ell^2 2^{\ell}$ by \eqref{eq:Cl-constants:Wepsmu} and \eqref{def:xmj_hj}.

\textbf{The case $j = \ell-3$.} 
Recall that a tuple $(v_{0}, \ldots, v_{\ell-2}) \in T_{\Sigma,\ell-3}(i)$ is removed, i.e., not in $T_{\Sigma,\ell-3}(i+1)$, if $e_{i+1}=f_{\ell-2}$, or in addition to $e_{i+1} \in C_{f_{\ell-2}}(i)$ it is not ignored.  
A moment's thought reveals that for every $(v_{0}, \ldots, v_{\ell-2}) \in T_{\Sigma,\ell-3}(i)$ with $e_{i+1} \in C_{f_{\ell-2}}(i)$, if $e_{i+1} \notin L_{\Sigma}(i)$ then (R2) holds, where $L_{\Sigma}(i)$ is as in the definition of $\badE[2,i]$. 
In other words, for every $(v_{0}, \ldots, v_{\ell-2}) \in T_{\Sigma,\ell-3}(i)$ we see that $e_{i+1} \in C_{f_{\ell-2}}(i) \setminus L_{\Sigma}(i)$ is a sufficient condition for being removed. 
Clearly, a necessary condition for being removed is $e_{i+1} \in \{f_{\ell-2}\} \cup C_{f_{\ell-2}}(i)$. 
Combining our previous findings and using that $e_{i+1}$ is chosen uniformly at random from $O(i)$, 
whenever $\cE_i \cap \neg\badE \cap \cH_i$ holds we deduce that 
\begin{equation*}
\label{eq:Tl3:th-_0}
\EE[|T^-_{\Sigma,\ell-3}(i)| \mid \cF_i ] = \sum_{(v_0, \ldots, v_{\ell-2}) \in T_{\Sigma,\ell-3}(i)} \frac{|C_{f_{\ell-2}}(i)| \pm |L_{\Sigma}(i)| \pm 1}{|O(i)|}    .
\end{equation*}
Recall that $\cH_i$ implies the inequalities \eqref{eq:closed-estimate} and \eqref{eq:closed-intersection-estimate}. Furthermore, since $\neg\badE[2,i]$ holds, we have $|L_{\Sigma}(i)| \leq p^{-1}n^{-1/(2\ell)}$. 
So, similar as in the previous case, using $n^{1/(2\ell)} = \omega(s_e)$, $n^{-1/(2\ell)} p^{-1} = \omega(1)$ and $f(t) \geq 1$, we obtain  
\begin{equation*}
\label{eq:Tl3:th-closed}
\begin{split}
|C_{f_{\ell-2}}(i)| \pm |L_{\Sigma}(i)| \pm 1 &\subseteq [(\ell-1) (2t)^{\ell-2}q(t) \pm 7\ell f(t)/s_e]p^{-1} \pm p^{-1}n^{-1/(2\ell)} \pm 1\\
& \subseteq [(\ell-1) (2t)^{\ell-2}q(t) \pm 9\ell f(t)/s_e]p^{-1}   ,
\end{split}
\end{equation*}
where the final estimate equals that of \eqref{eq:Tj:th-closed} for $j=\ell-3$. 
It is not difficult to see that the remaining calculations of the case $j< \ell-3$ carry over word by word, which yields \eqref{eq:Tj:th_goal} for $T^{-}_{\Sigma,\ell-3}(i)$.  
To summarize, we have verified the trend hypothesis~\eqref{eq:Tj:th_goal}.

\subsection{Boundedness hypothesis} 
\label{sec:proof:verification:bound}
Observe that in order to verify the boundedness hypothesis \eqref{eq:lem:dem:parameter_max_change}, using \eqref{def:dem:parameters} it suffices to show that whenever $\cE_i \cap \neg\badE \cap \cH_i$ holds, for every $j \in \cV$ we have 
\begin{equation}
\label{eq:dem:Tj:max_bound}
|T^{\pm}_{\Sigma,j}(i)|  \leq  k r^{\ell-3} p^{j} n^{-20\ell\epsilon} . 
\end{equation}

\subsubsection{Triples added in one step.} 
\label{sec:proof:verification:bound:added}
In this section we verify \eqref{eq:dem:Tj:max_bound} for $T^{+}_{\Sigma,j}(i)$. 
Recall that $e_{i+1} \in O(i)$ is added to $G(i)$. 
By construction we always have $|T^{+}_{\Sigma,0}(i)| = 0$, and thus we henceforth consider the case $j > 0$.  
Note that a necessary condition for $(v_0, \ldots, v_{\ell-2}) \in T_{\Sigma,j-1}(i)$ being added to $T_{\Sigma,j}(i+1)$ is $f_j = e_{i+1}$. 
Observe that there are at most $k r^{\ell-3-j}$ choices for $(v_{j+1}, \ldots, v_{\ell-2}) \in V_{j+1} \times \cdots \times V_{\ell-3} \times B$. So, using the extension property $\cU_T$ (cf.\ Lemma~\ref{lemma:extension:property}), we deduce that for each $e_{i+1}$ there are at most $k r^{\ell-3-j}$ tuples in $T_{\Sigma,j-1}(i)$ with $f_j = e_{i+1}$.
Together with \eqref{eq:Cl-parameters}, \eqref{eq:Cl-functions-estimates}, \eqref{eq:Cl-parameters3} and $j \geq 1$ this implies 
\begin{equation}
\label{eq:dem:Tj:add_bound}
|T^{+}_{\Sigma,j}(i)| \leq k r^{\ell-3-j} = k r^{\ell-3}p^{j} \cdot (rp)^{-j} = o(k r^{\ell-3} p^{j} n^{-20\ell\epsilon})  ,
\end{equation}
as desired.

\subsubsection{Triples removed in one step} 
\label{sec:proof:verification:bound:removed}
Next we use case distinction to establish \eqref{eq:dem:Tj:max_bound} for $T^{-}_{\Sigma,j}(i)$.

\textbf{The case $j < \ell-3$.} 
We claim that whenever $\cE_i \cap \neg\badE \cap \cH_i$ holds, for all $(v_{0}, \ldots, v_{\ell-2}) \in T_{\Sigma,j}(i)$ and every $xy \in \{f_{j+1}, \ldots, f_{\ell-2}\}$,  
the number of tuples in $T_{\Sigma,j}(i)$ containing $xy$ is bounded by 
\begin{equation}
\label{eq:dem:Tj:max_contained}
kr^{\ell-4}p^{j}n^{\ell\epsilon}   .
\end{equation}

First suppose that $xy = f_{j+1}$. For $(v_{j+2}, \ldots, v_{\ell-2}) \in V_{j+2} \times \cdots \times V_{\ell-3} \times B$ there are at most $k r^{\ell-4-j} \leq k r^{\ell-4}p^{j}$ choices, 
and so \eqref{eq:dem:Tj:max_contained} follows using the extension property $\cU_T$ (cf.\ Lemma~\ref{lemma:extension:property}).

Next we consider the case $xy = f_{\ell-2}$. 
As usual, whenever $\cH_i$ holds, the degree of every vertex is bounded by, say, $npn^{\epsilon}$.
Since for every $(v_{0}, \ldots, v_{\ell-2}) \in T_{\Sigma,j}(i)$ the vertices $v_{0}, \ldots, v_{j}$ form a path starting in $A$, we deduce that there are at most $k (npn^{\epsilon})^{j}$ choices for such $v_{0}, \ldots, v_{j}$. 
Furthermore, there are  most $r^{\ell-4-j}$ choices for $(v_{j+1}, \ldots, v_{\ell-4}) \in V_{j+1} \times \cdots \times V_{\ell-4}$. 
Therefore the number of tuples in $T_{\Sigma,j}(i)$ with $xy = f_{\ell-2}$ is bounded by $k (npn^{\epsilon})^{j} \cdot r^{\ell-4-j} \leq k r^{\ell-4} p^{j} n^{\ell\epsilon}$, as claimed by \eqref{eq:dem:Tj:max_contained}.

Finally we consider the case where $xy = f_{h}$ with $j+1 < h < \ell-2$. With a similar reasoning as in the previous case, there are at most $k (npn^{\epsilon})^{j}$ choices for $v_0, \ldots, v_{j}$, at most $r^{h-j-2}$ choices for $v_{j+1}, \ldots, v_{h-2}$ and at most $k r^{\ell-h-3}$ choices for $v_{h+1}, \ldots, v_{\ell-2}$. 
To summarize, there are at most 
\[
k (npn^{\epsilon})^{j} \cdot r^{h-j-2} \cdot k r^{\ell-h-3} \leq k^2 r^{\ell-5} p^{j} n^{\ell\epsilon} \leq k r^{\ell-4} p^{j} 
\]
tuples in $T_{\Sigma,j}(i)$ with $xy = f_{h}$, which establishes \eqref{eq:dem:Tj:max_contained}, with room to spare.

With the above estimate in hand, we are now ready to bound $|T^{-}_{\Sigma,j}(i)|$. 
Recall that $(v_0, \ldots, v_{\ell-2}) \in T_{\Sigma,j}(i)$ is removed, i.e., not in $T_{\Sigma,j}(i+1)$, if $e_{i+1} \in \{f_{j+1}, \ldots, f_{\ell-2}\}$ or $e_{i+1} \in C_{f_{j+1}}(i) \cup \cdots \cup C_{f_{\ell-2}}(i)$, which is equivalent to $\{f_{j+1}, \ldots, f_{\ell-2}\} \cap C_{e_{i+1}}(i) \neq \emptyset$. 
In other words, such a tuple is removed if for some $j+1 \leq h \leq \ell-2$ we have $f_{h}=e_{i+1}$ or $f_{h} \in C_{e_{i+1}}(i)$. 
Recall that whenever $\cH_i$ holds, by \eqref{eq:closed-estimate} we have, say, $|C_{e_{i+1}}(i)| \leq p^{-1}n^{\epsilon}$. 
So, using that \eqref{eq:dem:Tj:max_contained} gives an upper bound for the number of tuples in $T_{\Sigma,j}(i)$ which contain $f_{h}$, we deduce that 
\[
|T^{-}_{\Sigma,j}(i)| \leq (\ell+|C_{e_{i+1}}(i)|) \cdot kr^{\ell-4}p^{j}n^{\ell\epsilon} \leq kr^{\ell-4}p^{j-1} n^{2\ell\epsilon} \leq kr^{\ell-3}p^{j} \cdot n^{2\ell\epsilon}/(rp)   ,
\]
which, with a similar reasoning as in \eqref{eq:dem:Tj:add_bound}, establishes \eqref{eq:dem:Tj:max_bound} for $T^{-}_{\Sigma,j}(i)$ with $j < \ell-3$.

\textbf{The case $j = \ell-3$.} 
Recall that a tuple  $(v_{0}, \ldots, v_{\ell-2}) \in T_{\Sigma,\ell-3}(i)$ is removed, i.e., not in $T_{\Sigma,\ell-3}(i+1)$, according to different rules. 
In the following we bound the total number of tuples removed in one step by each rule, which were called cases~$1$ and~$2$ in Section~\ref{sec:main-proof:variables}.  
In case $1$ we have $f_{\ell-2}=e_{i+1}$ and so, given $e_{i+1}$, using  $\cU_T$ we deduce that at most one tuple is removed under case~$1$.

Turning to case~$2$, given $e_{i+1}=xy$, note that a necessary condition for being removed by (R2) is that for some $j \in [\ell-1]$ we have $f_{\ell-2} \in C_{x,y,\Sigma}(i,j)$ or  $f_{\ell-2} \in C_{y,x,\Sigma}(i,j)$. 
Recall that by $\cU_T$ every such pair $f_{\ell-2}$ is contained in at most one tuple in $T_{\Sigma,\ell-3}(i)$. 
So, since a tuple is only removed if the corresponding $C_{x,y,\Sigma}(i,j)$ or $C_{y,x,\Sigma}(i,j)$ has size at most $p^{-1}n^{-30\ell\epsilon}$, we deduce that at most $2\ell \cdot p^{-1}n^{-30\ell\epsilon}$ tuples are removed in one step by (R2).

Putting it all together, using $p^{-1} = (np)^{\ell-2}$ and $np \leq k$, for $j=\ell-3$ we obtain 
\[
|T^{-}_{\Sigma,\ell-3}(i)| \leq 1 + 2\ell p^{-1}n^{-30\ell\epsilon} \leq  (np)^{\ell-2} n^{-25\ell\epsilon} \leq k (np)^{\ell-3} n^{-25\ell\epsilon}   ,
\]
which readily establishes the boundedness hypothesis \eqref{eq:dem:Tj:max_bound}.

\subsection{Finishing the trajectory verification} 
\label{sec:proof:verification:end}
In this section we verify the remaining conditions of the differential equation method (Lemma~\ref{lem:dem}).

\textbf{Initial conditions.} 
Using \eqref{def:xj_Sj}, for $j > 0$ we clearly have $|T_{\Sigma,j}(0)|=0=x_{j}(0)$, which settles these cases.
For the remaining case $j=0$ we crudely have
\[
|T_{\Sigma,0}(0)|=|T_{\Sigma}| = k^2(r \pm kn^{10\ell\epsilon})^{\ell-3} = (1 \pm kn^{10\ell\epsilon}/r)^{\ell-3} k^2r^{\ell-3} \subseteq (1 \pm o(1)/s_{o}) k^2r^{\ell-3}    ,
\]
which together with $x_0(0)=1$, $S_0 = k^2r^{\ell-3}$ and $\beta_{\sigma}=1$ establishes \eqref{eq:lem:dem:initial_condition}.

\textbf{Bounded number of configurations and variables.} 
Using $k=u/60$ and \eqref{def:dem:parameters} we obtain 
\begin{equation*}
\label{eq:bound:nr_configs}
|\cC| \leq n \cdot \binom{n}{u} \cdot 3^u \cdot \sum_{r \leq kn^{10\ell\epsilon}} \binom{n}{r} \leq n^{2u+kn^{10\ell\epsilon}} < e^{kn^{15\ell\epsilon}} = e^{u_{\sigma}}   , 
\end{equation*} 
which together with $|\cV| \leq \ell$ clearly establishes \eqref{eq:lem:dem:bounded_parameters}.

\textbf{Additional technical assumptions and the function $f_{\sigma}(t)$.} 
Using $s=n^2p$ as well as \eqref{eq:Cl-parameters}, \eqref{eq:Cl-parameters2} and \eqref{def:dem:parameters}, straightforward calculations show that \eqref{eq:lem:dem:technical_assumptions:sm} holds, with room to spare; we leave the details to the reader. 
Recall that by \eqref{eq:Cl-parameters} we have $t_{\max} = m/s = \Theta((\log n)^{1/(\ell-1)})$. 
Furthermore, using \eqref{def:xj_Sj}--\eqref{def:xmj_hj}, elementary calculus yields $x_{j}^{\pm }(t) = O(t_{\max}^{\ell+j-2})$ 
and $|x_{j}''(t)| = O(t_{\max}^{2\ell+j-4})$ for $t \leq t_{\max}$.  
Thus, since for all $\sigma =(\Sigma,j)\in \cC \times \cV$ we have $x_{\sigma}(t) = x_{j}(t)$ and $y_{\sigma}^{\pm }(t) = x_{j}^{\pm }(t)$, it follows that 
\[
\sup_{0 \leq t \leq m/s} y_{\sigma}^{\pm }(t) = O(\log^2 n) \leq n^\epsilon = \lambda_{\sigma}  \qquad \text{ and } \qquad \int_0^{m/s} |x_{\sigma}''(t)| \ dt = O(\log n \cdot \log^3 n) \leq \lambda_{\sigma}    .
\] 
Recall that for all $\sigma \in \cC \times \cV$ we have $h_{\sigma}(t) = f'_{\sigma}(t)/2$ and $f_{\sigma}(t) = f(t) {q(t)}^{\iota}$, where $\iota \in \{0,\ldots, \ell-3\}$. Hence, using $f_{\sigma}(0)=1=\beta_{\sigma}$, we see that 
\[
f_{\sigma}(t) = 2 \int_{0}^{t} h_{\sigma}(\tau) \ d\tau + f_{\sigma}(0) = 2 \int_{0}^{t} h_{\sigma}(\tau) \ d\tau + \beta_{\sigma}   .
\] 
Note that $h_{\sigma}(0)=O(1) \leq n^{3\epsilon} = s_{\sigma} \lambda_{\sigma}$ and $h'_{\sigma}(t) \geq 0$. 
Pick $t^{*}=t^{*}(\ell) \geq 1$ large enough such that for all $t \geq t^{*}$ we have $t^{2\ell} \leq f(t)$. 
Observe that $h'_{\sigma}(t)$ is bounded by some constant for $t \leq t^{*}$, and note that for \mbox{larger $t$} we have, say, $h'_{\sigma}(t) \leq W^3{f(t)}^2$. Putting things together, using \eqref{eq:Cl-parameters} and \eqref{eq:Cl-functions-estimates}, i.e., $m/s = O(\log n)$ and $f(t) \leq n^{\epsilon}$, we readily obtain 
\[
\int_{0}^{m/s} |h'_{\sigma}(t)| \ dt \leq \int_{0}^{t^{*}} h'_{\sigma}(t) \ dt + \int_{t^{*}}^{m/s} W^3{f(t)}^2 \ dt \leq O(1) + O(\log n \cdot  n^{2\epsilon}) \leq n^{3\epsilon} = s_{\sigma} \lambda_{\sigma}  . 
\] 
To summarize, we showed that \eqref{eq:lem:dem:derivative} as well as the additional technical assumptions \eqref{eq:lem:dem:technical_assumptions:sm}--\eqref{eq:lem:dem:technical_assumptions:h} hold, and this completes the proof of Lemma~\ref{lem:dem:trajectories}. \qed

\section{A `transfer theorem' for the $H$-free process} 
\label{sec:transfer}
In the $H$-free process there is a complicated dependency among the edges, and thus standard concentration inequalities are not directly applicable. 
In this section we show how to overcome this problem for decreasing properties by establishing a `transfer theorem'. 
Roughly speaking, this allows us to `transfer' results for decreasing properties from the binomial random graph model to the $H$-free process, at the cost of only slightly increasing the `expected' edge density. 
In our argument this will be a crucial tool for establishing Lemma~\ref{lem:dem:config}.

\subsection{Relating the $H$-free process with the uniform random graph} 
We start by relating the $H$-free process with the more familiar uniform random graph. 
In the $H$-free process the set of open pairs $O(i)$ is defined in the obvious way: it contains all pairs $xy \in \binom{[n]}{2} \setminus E(i)$ for which $G(i) \cup \{xy\}$ remains $H$-free. 
The following estimate is not best possible, but it suffices for our purposes and keeps the formulas simple. 
\begin{lemma}
\label{lem:transfer:uniform}
Suppose $\cQ$ is a decreasing graph property and that $\lambda=\lambda(n) \geq 2$ is a parameter. 
Then for every $1 \leq i \leq \binom{n}{2}/\lambda$, setting $M= i \lambda$, we have 
\begin{equation}
\label{eq:lem:transfer:uniform}
\PP[G(i) \notin \cQ \text{ and } |O(i)| \geq n^2/\lambda] \leq \PP[G_{n,M} \notin \cQ] + e^{-i/4}   ,
\end{equation}
where $G(i)$ denotes the graph produced by the $H$-free process after the first $i$ steps. 
\end{lemma}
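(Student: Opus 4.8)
The plan is to couple the $H$-free process with the uniform random graph $G_{n,M}$ by revealing edges one at a time and comparing the $i$-th edge chosen by the $H$-free process against a sequence of $M = i\lambda$ uniformly random edges. Concretely, I would run the $H$-free process for $i$ steps and simultaneously run the random graph process for $M$ steps, so that each edge $e_{j+1}$ selected by the $H$-free process from $O(j)$ can be matched to an edge drawn uniformly from all pairs not yet present. The key observation is that, conditional on the first $j$ steps, the $H$-free process picks $e_{j+1}$ uniformly at random from $O(j)$, while a uniform-random-graph step picks uniformly from $\binom{[n]}{2}\setminus E(j) \supseteq O(j)$; so with probability $|O(j)|/(\binom{n}{2}-j) \geq (n^2/\lambda)/\binom{n}{2} \geq 2/\lambda$ (on the event $|O(j)|\geq n^2/\lambda$) a uniform-random-graph draw lands in $O(j)$ and can be used as the $H$-free step. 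Thus in a block of $\lambda$ uniform draws we expect to successfully advance the $H$-free process at least once, and over $M = i\lambda$ draws we should succeed at least $i$ times with high probability.

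The formal mechanism I would use: reveal $G_{n,M}$ as a sequence of $M$ edges $e_1', e_2', \ldots, e_M'$ added one at a time (the uniform random graph process, which produces $G_{n,M}$ after $M$ steps). Process these in order; maintain a counter for how many $H$-free steps have been completed. When the current uniform edge $e_t'$ is an endpoint-disjoint... more precisely, when $e_t'$ lies in $O(j)$ for the current $H$-free graph $G(j)$ (where $j$ is the completed-step counter), accept it as $e_{j+1}$ and increment $j$; otherwise skip it. Since $\cQ$ is decreasing and $G(i) \subseteq G_{n,M}$ once $i$ steps are completed within the $M$ draws, $G(i)\notin\cQ$ forces $G_{n,M}\notin\cQ$. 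Hence the event $\{G(i)\notin\cQ,\ |O(i)|\geq n^2/\lambda\}$ is contained in $\{G_{n,M}\notin\cQ\} \cup \{\text{fewer than } i \text{ successes in } M \text{ draws}\}$, provided that along the way $|O(j)| \geq n^2/\lambda$ held at each step we needed it — which is guaranteed by monotonicity of $|O(\cdot)|$... actually $|O(\cdot)|$ is decreasing in $j$, so $|O(i)|\geq n^2/\lambda$ implies $|O(j)| \geq n^2/\lambda$ for all $j\leq i$. Good — that is the point that makes the conditioning clean.

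It remains to bound $\PP[\text{fewer than } i \text{ successes among } M \text{ draws}]$. Each draw, conditional on the past and on the relevant event, is a success with probability at least $p_0 := (n^2/\lambda)/\binom{n}{2} \geq 2/\lambda$ (using $\binom{n}{2}\leq n^2/2$). These are not independent, but they stochastically dominate a sequence of independent $\mathrm{Bernoulli}(2/\lambda)$ trials, so the number of successes $X$ in $M = i\lambda$ draws dominates $\mathrm{Bin}(M, 2/\lambda)$, which has mean $2i$. By the lower Chernoff bound \eqref{eq:chernoff:lower} with $\mu = 2i$ and deviation $t = i$, we get $\PP[X < i] \leq \PP[\mathrm{Bin}(M,2/\lambda) \leq 2i - i] \leq e^{-i^2/(2\cdot 2i)} = e^{-i/4}$. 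Combining, $\PP[G(i)\notin\cQ,\ |O(i)|\geq n^2/\lambda] \leq \PP[G_{n,M}\notin\cQ] + e^{-i/4}$, as claimed.

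The main obstacle is making the coupling argument rigorous: one has to be careful that the "success probability at least $2/\lambda$" bound can be invoked at each step under the correct conditioning, and that the stochastic-domination step is valid despite the adaptive skipping. The clean way around this is to define the coupling as a function of the i.i.d.-like randomness of the uniform process and an auxiliary independent Bernoulli sequence: at draw $t$, if $e_t'\in O(j)$ accept (this already happens with conditional probability $\geq 2/\lambda$ on $\{|O(i)|\geq n^2/\lambda\}$); formally dominate from below by thinning. I would phrase it via the standard "reveal the random graph process and greedily extract an $H$-free subprocess" coupling, note that on $\{|O(i)|\ge n^2/\lambda\}$ the extraction succeeds $i$ times within $M$ draws unless the Binomial lower-tail event occurs, and invoke \eqref{eq:chernoff:lower}. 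Everything else is routine.
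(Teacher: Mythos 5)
Your proposal is correct and follows essentially the same route as the paper: reveal the uniform random graph process, greedily extract the $H$-free process from it, split the bad event according to whether at least $i$ extractions succeed within $M$ draws, and bound the failure probability by stochastic domination of $\mathrm{Bin}(M,2/\lambda)$ plus the lower Chernoff bound. The "thinning" fix you sketch for the conditioning issue is precisely the paper's device (it defines auxiliary indicators that are forced to equal $1$ once the open-pair count drops below $n^2/\lambda$, and notes that on the event of interest these coincide with the true acceptance indicators).
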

\begin{proof} 
We sequentially generate the edges $e_{1},e_{2},\ldots$, where each  edge $e_{j+1}$ is chosen uniformly at random from $E(K_n) \setminus \{e_{1},e_{2},\ldots, e_{j}\}$. 
On the one hand, the edge-set $\{e_{1},e_{2},\ldots, e_{M}\}$ clearly gives $G_{n,M}$. 
On the other hand, we obtain the graph produced by the $H$-free process by sequentially traversing the $e_j$ and only adding those edges which do not complete a copy of $H$.  
First, for every $1 \leq j \leq M$ we define the indicator variable $X_j$ for the event that $e_{j}$ is added to the graph of the $H$-free process, and, furthermore, define the random variable 
\[
X^{j} = \sum_{1 \leq j' \leq j} X_{j'}   ,
\]
which counts the number of edges in the graph produced by the $H$-free process after traversing $e_1,\ldots,e_{j}$. 
Next, for every $1 \leq j \leq M$ we define
\[
Y_{j} =	\begin{cases}
		1, & \text{if $|O(X^{j-1})| < n^2/\lambda$,}\\
		X_{j}, & \text{otherwise,}
	\end{cases}
\qquad \text{ and } \qquad Y^j = \sum_{1 \leq j' \leq j} Y_{j'} .
\]
If $|O(X^{j-1})| \geq  n^{2}/\lambda$ holds, we have $Y_j = X_j$ by construction. In this case the next edge is added to the graph of the $H$-free process with probability at least $|O(X^{j-1})|/\binom{n}{2} \geq 2/\lambda$. 
Otherwise $Y_{j} = 1$ holds, and so we conclude that $\PP[Y_{j}=1 \mid Y_1, \ldots, Y_{j-1}] \geq 2/\lambda$, which implies that $Y^{M}$ stochastically dominates a binomial random variable with $M$ trials and success probability $2/\lambda$. 
With this in mind, standard Chernoff bounds, see e.g.~\eqref{eq:chernoff:lower} of Lemma~\ref{lem:chernoff}, give 
\begin{equation}
\label{eq:chernoff}
\PP[Y^{M} \leq 2i-t] \leq e^{-t^2/(4i)}   . 
\end{equation}

In the remainder we prove \eqref{eq:lem:transfer:uniform}. 
To this end first observe that 
\begin{equation}
\label{eq:lem:transfer:uniform:1}
\PP[G(i) \notin \cQ \text{ and } |O(i)| \geq n^{2}/\lambda] \leq \PP[G(i) \notin \cQ  \text{ and } X^{M} \geq i] + \PP[|O(i)| \geq  n^{2}/\lambda \text{ and }  X^{M} < i] . 
\end{equation}
Note that by construction $X^{M} \geq i$ implies $G(i) \subseteq G_{n,M}$, and, since $\cQ$ is a decreasing graph property, in this case $G(i) \notin \cQ$ implies $G_{n,M} \notin \cQ$. It follows that
\begin{equation*}
\label{eq:prob:subset}
\PP[G(i) \notin \cQ \text{ and } X^{M} \geq i] \leq \PP[G_{n,M} \notin \cQ] . 
\end{equation*}
Furthermore, since $O(i)$ is decreasing, if both $|O(i)| \geq n^{2}/\lambda$ and $X^{M} < i$ hold, then this implies $Y^{M} = X^{M} < i$. So, by \eqref{eq:chernoff} we have
\begin{equation*}
\label{eq:prob:badevent}
\PP[|O(i)| \geq n^{2}/\lambda \text{ and } X^{M} < i] \leq \PP[Y^{M} < i] \leq e^{-i/4} .
\end{equation*}
Substituting these bounds into \eqref{eq:lem:transfer:uniform:1} gives \eqref{eq:lem:transfer:uniform}, completing the proof. 
\end{proof} 
If we relax the additive error in Lemma~\ref{lem:transfer:uniform} to $o(1)$, then for $|O(i)| \geq \binom{n}{2}/\lambda$ a slight modification of the above proof works with $M=i\lambda +\omega(1) \lambda \sqrt{i}$; we leave these details to the interested reader.

\subsection{A `transfer theorem' for decreasing properties} 
Using Theorem~\ref{thm:BohmanKeevash2010H} and \eqref{eq:Cl-functions-estimates}, we see that $|O(m)| \geq n^{2-\epsilon/2}$ holds whp in the $C_{\ell}$-free process.  
So, setting $\lambda = \lambda(n) = n^{\epsilon/2}$ and using the `asymptotic equivalence' of the uniform and the binomial random graph for monotone graph properties (see e.g.\ Section~$1.4$ of~\cite{JLR2000RandomGraphs}), Lemma~\ref{lem:transfer:uniform} readily gives the next theorem. A similar idea is used in~\cite{Wolfovitz2010K4} for $H=K_4$. 
Observe that the edge-density of $G(m)$ is roughly $2pt_{\mathrm{max}} = \Theta(p (\log n)^{1/(\ell-1)})$ in the $C_{\ell}$-free process. 
Intuitively, the following theorem thus states that for decreasing properties, $G(m)$ is `comparable' with the binomial random graph with only slightly larger edge density $pn^{\epsilon}$. 
\begin{theorem}[`Transfer Theorem']%
\label{thm:transfer:binomial}
Define $m=m(n)$ and $p=p(n)$ as in \eqref{eq:Cl-parameters}. 
Suppose that $\epsilon$ is chosen as in \eqref{eq:Cl-constants:Wepsmu} and that $\cQ$ is a decreasing graph property. 
Then for the $C_{\ell}$-free process we have 
\begin{equation*}
\label{eq:thm:transfer:binomial}
\PP[G(m) \notin \cQ] \leq \PP[G_{n,pn^{\epsilon}} \notin \cQ] + o(1)   . 
\vspace{-2.0em}
\end{equation*}\qed
\end{theorem}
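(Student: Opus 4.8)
The plan is to deduce the Transfer Theorem directly from Lemma~\ref{lem:transfer:uniform} together with the known behaviour of the $C_\ell$-free process recorded in Theorem~\ref{thm:BohmanKeevash2010H}, and the classical comparison between the uniform and binomial random graph models. The proof is short; the work is all in checking that the parameters line up.

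\begin{proof}[Proof of Theorem~\ref{thm:transfer:binomial}]
Set $\lambda = \lambda(n) = n^{\epsilon/2}$, so that $\lambda \geq 2$ for $n$ large. Recall from \eqref{eq:Cl-parameters} that $m = \mu n^{\ell/(\ell-1)}(\log n)^{1/(\ell-1)}$, and note that $m \leq \binom{n}{2}/\lambda$ for $n$ large, since $m = n^{2-1+1/(\ell-1)+o(1)}$ while $\binom{n}{2}/\lambda = n^{2-\epsilon/2}/2 \cdot(1+o(1))$. Thus Lemma~\ref{lem:transfer:uniform} applies with $i = m$, and, writing $M = m\lambda = mn^{\epsilon/2}$, it gives
\begin{equation}
\label{eq:transfer:proof:uniform}
\PP\big[G(m) \notin \cQ \text{ and } |O(m)| \geq n^2/\lambda\big] \leq \PP[G_{n,M} \notin \cQ] + e^{-m/4}   .
\end{equation}
Since $m/4 = \omega(1)$, the last term is $o(1)$.

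Next we discard the event $|O(m)| < n^2/\lambda$. By Theorem~\ref{thm:BohmanKeevash2010H} the event $\cT_m$ holds whp, and on $\cT_m$ the estimate \eqref{eq:open-estimate} together with the lower bound $q(t) \geq n^{-\epsilon/4}$ and $f(t)/s_e = o(1)$ from \eqref{eq:Cl-functions-estimates} yields $|O(m)| \geq q(t_{\max})n^2/4 \geq n^{2-\epsilon/4}/4 \geq n^2/\lambda$ for $n$ large, because $\lambda = n^{\epsilon/2}$. Hence $\PP[|O(m)| < n^2/\lambda] = o(1)$, and combining this with \eqref{eq:transfer:proof:uniform} gives
\begin{equation}
\label{eq:transfer:proof:combined}
\PP[G(m) \notin \cQ] \leq \PP[G_{n,M} \notin \cQ] + o(1)   .
\end{equation}

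Finally we pass from the uniform model $G_{n,M}$ with $M = mn^{\epsilon/2} = \mu n^{\ell/(\ell-1)}(\log n)^{1/(\ell-1)}n^{\epsilon/2}$ to the binomial model $G_{n,pn^\epsilon}$. The expected number of edges of $G_{n,pn^\epsilon}$ is $\binom{n}{2}pn^\epsilon = \tfrac12 n^{1+1/(\ell-1)+\epsilon}(1+o(1))$, which exceeds $2M$ for $n$ large since $M = n^{1+1/(\ell-1)+\epsilon/2+o(1)}$. Because $\cQ$ is a decreasing graph property and $M \leq \binom{n}{2}pn^\epsilon/2$, the standard coupling/asymptotic-equivalence argument for monotone properties (see e.g.\ Section~$1.4$ of~\cite{JLR2000RandomGraphs}) gives $\PP[G_{n,M} \notin \cQ] \leq \PP[G_{n,pn^\epsilon} \notin \cQ] + o(1)$: indeed, one may couple $G_{n,M} \subseteq G_{n,p'}$ whp for any $p'$ with $\binom{n}{2}p' \geq M(1+\omega(1)M^{-1/2})$, and in particular for $p' = pn^\epsilon$, so that $G_{n,p'} \notin \cQ$ forces $G_{n,M} \notin \cQ$. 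Substituting this into \eqref{eq:transfer:proof:combined} completes the proof.
\end{proof}

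The only mildly delicate point is the third step, the uniform-to-binomial comparison; everything else is bookkeeping with the exponents of $n$. I do not anticipate a genuine obstacle here, since the decreasing hypothesis on $\cQ$ makes the one-sided comparison automatic once the edge counts are in the right order, and the whp lower bound on $|O(m)|$ is exactly what Theorem~\ref{thm:BohmanKeevash2010H} supplies. If one wanted to avoid invoking the asymptotic-equivalence black box, one could instead apply Lemma~\ref{lem:transfer:uniform} with a slightly larger $\lambda$ and then couple $G_{n,M}$ into $G_{n,pn^\epsilon}$ directly via a Chernoff bound on $\mathrm{Bin}(\binom{n}{2},pn^\epsilon)$, which is the route hinted at in the remark following Lemma~\ref{lem:transfer:uniform}.
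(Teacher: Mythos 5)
Your proof is correct and follows exactly the route the paper takes: apply Lemma~\ref{lem:transfer:uniform} with $\lambda=n^{\epsilon/2}$ and $i=m$, use $\cT_m$ from Theorem~\ref{thm:BohmanKeevash2010H} together with \eqref{eq:Cl-functions-estimates} to discard the event $|O(m)|<n^2/\lambda$, and then pass from $G_{n,M}$ to $G_{n,pn^{\epsilon}}$ via the standard uniform/binomial equivalence for monotone properties. The paper compresses all of this into two sentences; your write-up just supplies the parameter checks it leaves implicit.
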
 
In fact, this result also holds for the $H$-free process, where $H$ is strictly $2$-balanced, if $m$, $p$ and $\epsilon$ are chosen as in Sections~$1.2$ and~$1.3$ of~\cite{BohmanKeevash2010H}, since then $|O(m)| \geq n^{2-\epsilon/2}$, with room to spare. 
We believe that the above `transfer theorem' will significantly aid in the future analysis of the $H$-free process, since for decreasing properties it often allows us to work with the \emph{much} easier binomial random graph model, which has been extensively studied and for which e.g.\ sophisticated concentration inequalities are available.

\section{Properties of random graphs} 
\label{sec:binomial_results}
In this section we introduce several decreasing graph properties, which are key ingredients in our proof of Lemma~\ref{lem:dem:config}. 
Using the `transfer theorem' of Section~\ref{sec:transfer}, it suffices to prove that they hold whp for the binomial random graph $G_{n,p'}$ with $p' = p n^{\epsilon}$, where $p$ is defined as in \eqref{eq:Cl-parameters} and $\epsilon$ is chosen as in \eqref{eq:Cl-constants:Wepsmu}. 
We remark that essentially all results in this section are not best possible, but suffice for our purposes. 
For example, in an attempt to keep the formulas simple, we have not optimized the multiplicative $n^{\epsilon}$ factors involved (their contribution in our later arguments will be negligible).

\subsection{Basic properties} 
\label{sec:binomial_results:basic_properties}
\begin{lemma}%
\label{lem:bounded_codegree}%
Let $\cN$ denote the event that for all pairs of distinct vertices $x,y \in [n]$ we have $|\Gamma(x) \cap \Gamma(y)| \leq 9$. Then $\cN$ holds whp in $G_{n,p'}$.  
\end{lemma}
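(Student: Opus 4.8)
The plan is to use a straightforward first-moment (union bound) argument on the number of pairs of distinct vertices having too many common neighbours. Here $p' = pn^{\epsilon}$ with $p = n^{-1+1/(\ell-1)}$, so $p' = n^{-1+1/(\ell-1)+\epsilon}$, and since $\ell \geq 4$ we have $1/(\ell-1) \leq 1/3$, hence $p' \leq n^{-2/3+\epsilon} = n^{-2/3+o(1)}$ (using $\epsilon \leq 1/(2^{15}\ell^3)$, which is tiny).

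First I would fix two distinct vertices $x,y \in [n]$ and a set $W \in \binom{[n]\setminus\{x,y\}}{10}$ of ten potential common neighbours. The probability that every vertex of $W$ is adjacent to both $x$ and $y$ in $G_{n,p'}$ is exactly $(p')^{20}$, since these $20$ pairs are distinct and edges appear independently. Summing over all choices, the expected number of ``bad'' triples $(x,y,W)$ is at most
\begin{equation*}
\binom{n}{2}\binom{n}{10}(p')^{20} \leq n^{2} \cdot n^{10} \cdot (p')^{20} = n^{12} (p')^{20}.
\end{equation*}
Then I would plug in $p' = n^{-1+1/(\ell-1)+\epsilon}$ to get $n^{12}(p')^{20} = n^{12-20+20/(\ell-1)+20\epsilon} = n^{-8 + 20/(\ell-1) + 20\epsilon}$. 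Since $\ell \geq 4$ gives $20/(\ell-1) \leq 20/3 < 7$, and $20\epsilon$ is negligible (at most $20/(2^{15}\ell^3) < 1$), the exponent is at most $-8 + 7 + 1 < 0$, in fact bounded away from $0$; so this expectation is $n^{-\Omega(1)} = o(1)$.

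By Markov's inequality, whp there is no bad triple, which means that for every pair of distinct $x,y$ we have $|\Gamma(x)\cap\Gamma(y)| \leq 9$; i.e.\ $\cN$ holds whp in $G_{n,p'}$. Finally, since $\cN$ is clearly a decreasing graph property (deleting edges can only shrink neighbourhoods and hence common neighbourhoods), Theorem~\ref{thm:transfer:binomial} transfers this to the $C_{\ell}$-free process, giving that $\cN$ holds whp in $G(m)$ as well. There is essentially no obstacle here — the only point to be careful about is checking that the exponent $-8 + 20/(\ell-1) + 20\epsilon$ is genuinely negative for all $\ell \geq 4$, which is where the constraint $\ell \geq 4$ (rather than $\ell = 3$) is used, since at $\ell = 3$ one would get $20/(\ell-1) = 10 > 8$ and the bound $9$ on common neighbours would fail.
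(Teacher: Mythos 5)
Your proof is correct and is essentially identical to the paper's: both bound $\PP[\neg\cN]$ by $\binom{n}{2}\binom{n-2}{10}(p')^{20}$ via a union bound over pairs and $10$-sets of potential common neighbours, and both use $\ell\geq 4$ to get $p\leq n^{-2/3}$ so the exponent is negative. The closing remark about transferring to the process via Theorem~\ref{thm:transfer:binomial} is not part of this lemma but matches how $\cN$ is later used in Section~\ref{sec:good_configurations_exist}.
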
 
\begin{proof}  
Using $\ell \geq 4$, \eqref{eq:Cl-constants:Wepsmu} and \eqref{eq:Cl-parameters}, i.e., $p=n^{-1+1/(\ell-1)} \leq n^{-2/3}$ and $\epsilon \leq 1/20$, we deduce that  
\[
\PP[\neg\cN] \leq \binom{n}{2} \binom{n-2}{10} (pn^{\epsilon})^{20} \leq n^2 (n p^2 n^{2\epsilon})^{10} \leq  n^2 (n^{-1/3+2\epsilon})^{10} = o(1)   , 
\]
as claimed. 
\end{proof} 
The following result states that every set of size at most $u$ contains a large independent subset. 
A similar argument was used by Bollob\'as and Riordan in~\cite{BollobasRiordan2000}.  
\begin{lemma}
\label{lem:set:independent:subset}
Let $\cI$ denote the event that for every $U \subseteq [n]$ with $|U| \leq u$ there exists an independent set $S \subseteq U$ with $|S| \geq  |U|/6$. 
Then $\cI$ holds whp in $G_{n,p'}$. 
\end{lemma}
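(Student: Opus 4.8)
The plan is to show that a uniformly random subset $S \subseteq U$ of size $\lceil |U|/6 \rceil$ is independent with probability close to $1$, and then take a union bound over all sets $U$ with $|U| \le u$. Since $\cQ$ here is a decreasing property, by the transfer theorem (Theorem~\ref{thm:transfer:binomial}) it suffices to prove the statement whp for $G_{n,p'}$ with $p' = pn^{\epsilon}$. First I would fix $U$ with $|U| = a \le u$ and set $s = \lceil a/6 \rceil$; it is enough to bound the probability that \emph{every} $s$-subset of $U$ spans at least one edge. Rather than a second-moment or random-subset argument, the cleanest route is the greedy/deletion one: inside $G_{n,p'}[U]$, delete one endpoint of each edge and check that fewer than $5a/6$ vertices get deleted, equivalently that $e(G_{n,p'}[U]) < 5a/6$ would already suffice (an upper bound on the number of edges gives an independent set of size at least $a - e(G[U])$). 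Since $\EE[e(G_{n,p'}[U])] = \binom{a}{2} p' \le \tfrac12 a^2 p' $, and $a \le u = \Theta((n\log n)^{1/(\ell-1)})$ gives $a p' \le u p n^{\epsilon} = \Theta(n^{2/(\ell-1)-1+\epsilon}(\log n)^{1/(\ell-1)}) = o(1)$ for $\ell \ge 4$ and $\epsilon$ small, the expected number of edges is $o(a)$, far below the threshold $5a/6$.

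So the only real work is a tail bound: I would apply the Chernoff bound \eqref{eq:chernoff:upper:simple} (or the Deletion Lemma, Lemma~\ref{lem:deletion_lemma}) to $e(G_{n,p'}[U])$, which is a sum of $\binom{a}{2}$ independent indicators with mean $\mu_U \le \tfrac12 a^2 p' = o(a)$. Taking the deviation level $t = a/6 \ge 7 \mu_U$ for $n$ large (which holds precisely because $a p' = o(1)$), we get $\PP[e(G_{n,p'}[U]) \ge a/6] \le e^{-a/6}$. Hence $\PP[\,U$ has no independent subset of size $a/6\,] \le e^{-a/6}$.

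Finally I would union bound over all $U$. For a fixed size $a$ there are $\binom{n}{a} \le n^{a} = e^{a \log n}$ choices, and summing $e^{a\log n} e^{-a/6}$ over $a$ does \emph{not} converge — so a naive union bound fails, and this is the main obstacle. The fix is to note that the constant $1/6$ in the exponent must be beaten by $\log n$, so instead one should use a deviation level that is a constant multiple of $a\log n$, not of $a$: since $\mu_U = o(a) = o(a\log n)$, the Chernoff bound actually gives $\PP[e(G_{n,p'}[U]) \ge a] \le e^{-a}$ and more generally $\PP[e(G_{n,p'}[U]) \ge C a \log n] \le e^{-Ca\log n}$ for any constant $C$ once $n$ is large. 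But we cannot afford to lose a $\log n$ factor in the independent set size. The correct resolution is to observe that $a p' \le u p' = O(n^{2/(\ell-1)-1+\epsilon}(\log n)^{1/(\ell-1)}) = n^{-\Omega(1)}$, so $\mu_U \le a^2 p' / 2 \le a \cdot n^{-\Omega(1)}$; thus for the event $\{e(G_{n,p'}[U]) \ge a/6\}$ we may take $t = a/6$ and the Chernoff bound \eqref{eq:chernoff:upper:simple} yields not $e^{-a/6}$ but $e^{-t}$ with $t \ge (a/6)$, and crucially we may instead bound $\PP[e(G_{n,p'}[U]) \ge 1] \le \binom{a}{2} p' \le a^2 p'$ directly by Markov — no, this is still not enough. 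The genuinely correct bound: since $\mu_U \le a^2 p'/2$ and we want to compare with $e^{-a \log n}$, use \eqref{eq:chernoff:upper:simple} at level $t = a$ only when $a \ge 7\mu_U$ (true), giving $\PP \le e^{-a}$, still losing. The actual resolution used in such arguments is that $a^2 p' \le a \cdot u p' = a \cdot n^{-\delta}$ for some $\delta = \delta(\ell,\epsilon) > 0$, so by Markov's inequality $\PP[e(G_{n,p'}[U]) \ge a/6] \le (a^2 p'/2)/(a/6) = 3 a p' \le n^{-\delta}$, and then iterating: the number of edges is a sum of independents with tiny mean, so $\PP[e \ge k] \le \binom{\binom{a}{2}}{k} (p')^{k} \le (a^2 p')^{k} \le n^{-\delta k}$; taking $k = \lceil 2a \log n / (\delta \log n)\rceil$-ish... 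The clean statement: choose $k_0 = \lceil a/6 \rceil$ and bound $\PP[e \ge k_0] \le (a^2 p'/k_0)^{k_0} \le (6 a p')^{k_0} \le (n^{-\delta})^{a/6}$, and now $\sum_{a \le u} \binom{n}{a} (n^{-\delta})^{a/6} \le \sum_a (n^{1-\delta/6})^{a} \cdot (\text{harmless})$ — which still diverges unless $\delta > 6$. So in fact one needs the full strength: $a p' = n^{-\Omega(1)}$ with a \emph{large} hidden constant is not available, so the right move is to prove the independent set has size $\ge a(1-o(1))$ via the Deletion Lemma with $b$ chosen so that $e^{-bk/(\mu+k)}$ beats $n^{a}$, i.e. $b \asymp a \log n$, $k \asymp a$, giving failure probability $\le \exp\{-\Omega(a \log n)\}$, which does beat $\binom{n}{a}$. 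I expect the write-up to invoke Lemma~\ref{lem:deletion_lemma} with $\cS$ the set of edges of $K_n$ restricted to $U$, $\mu = \binom{a}{2} p'$, $b = a$ and $k = a/12$: then with probability $\ge 1 - \exp\{-a/13\}$ one can delete $\le a$ edges to leave $\le \mu + a/12 \le a/6$ edges, hence after deleting one endpoint per remaining edge and the $\le a$ touched by deleted edges... this needs $b$ edges deleted to kill $b$ endpoints, total deleted vertices $\le a + a/6 < a/3 \cdot$, wait — deleting $b=a$ edges is wasteful. The honest main obstacle, which I would flag as the crux, is exactly this bookkeeping: arranging the deletion-lemma parameters so that the failure probability is $o(n^{-a})$ while the guaranteed independent set retains size $\ge a/6$; since $a p' = n^{-\Omega(1)}$ this is comfortably achievable, but getting the constants to line up against the $\binom{n}{a}$ union bound is the one nontrivial point, and the rest is routine.
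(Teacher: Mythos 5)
Your proposal does not close; the gap is real and it is exactly the point you flag at the end but then wave away as ``comfortably achievable.'' The missing idea is the deterministic extraction step: a graph in which every subset $W$ spans fewer than $3|W|$ edges contains an independent set of size at least $|W|/6$, obtained by repeatedly picking a vertex of induced degree at most $5$ and deleting its closed neighbourhood (each step costs at most $6$ vertices and contributes one independent vertex). This is what the paper does, and it means one only has to rule out the event that some $U$ spans \emph{at least $3|U|$ edges} --- a far more generous threshold than the $e(G[U])<5|U|/6$ (or $|U|/6$) that your ``delete one endpoint per edge'' reduction forces on you. For the $3|U|$ threshold the plain first-moment union bound converges: the term for $|U|=x$ is at most $\binom{n}{x}\binom{\binom{x}{2}}{3x}(p')^{3x}\le\bigl(O(1)\cdot n x^2 (p')^3\bigr)^x\le (n^3p^5n^{5\epsilon})^x\le n^{-x/4}$, because each vertex of $U$ is ``paid for'' by three edges, each costing $p'\le n^{-2/3+\epsilon}$.

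By contrast, every route you try is doomed by the arithmetic you yourself compute. The probability that a fixed $U$ of size $a$ spans at least $a/6$ edges is only about $(6ap')^{a/6}\le n^{-\delta a/6}$ with $\delta=1/3-O(\epsilon)$ when $\ell=4$, which cannot beat $\binom{n}{a}$. And your final fallback --- the Deletion Lemma with $\cS$ the edges inside $U$ --- requires $b\log(1+k/\mu)\gtrsim a\log n$ with $\log(1+k/\mu)\approx\delta\log n$, hence $b\gtrsim a/\delta\ge 3a$ deleted edges; at that point ``one endpoint per edge'' deletes more than $a$ vertices and the surviving independent set has no guaranteed size at all. So the bookkeeping does \emph{not} line up along your route: no choice of parameters in your scheme simultaneously beats the $\binom{n}{a}$ entropy and preserves $|U|/6$ vertices. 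You need to replace the edge-counting reduction ``independent set of size $a-e(G[U])$'' by the average-degree/greedy reduction; once that is in place the rest of your calculation (in particular $nu^2p'^3=n^{-\Omega(1)}$) is exactly what is needed.
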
 
\begin{proof} 
Let $\cE$ denote the event that every $U \subseteq [n]$ with $|U| \leq u$ spans less than $3|U|$ edges. We have 
\begin{equation*}
\PP[\neg\cE] \leq \sum_{1 \leq x \leq u} \binom{n}{x} \binom{\binom{x}{2}}{3x} (pn^{\epsilon})^{3x} \leq \sum_{1 \leq x \leq u} \left(\frac{ne}{x}\right)^x \left(\frac{xe}{6}\right)^{3x} (pn^{\epsilon})^{3x} \leq \sum_{x \geq 1} \left(n u^2 p^3 n^{3\epsilon}\right)^x   . 
\end{equation*} 
Using $\ell \geq 4$, \eqref{eq:Cl-constants:Wepsmu}, \eqref{eq:Cl-parameters} and \eqref{eq:Cl-parameters2}, i.e., $u \leq np n^\epsilon$, $p=n^{-1+1/(\ell-1)} \leq n^{-2/3}$ and $\epsilon \leq 1/60$, we see that
\[
n u^2 p^3 n^{3\epsilon} \leq n^3 p^5 n^{5\epsilon} \leq n^{-1/3+5\epsilon} \leq n^{-1/4}   ,
\]
which implies $\PP[\neg\cE] = o(1)$. 
Suppose that $\cE$ holds. 
Then every set of at most $u$ vertices induces a graph with minimum degree less than six. 
Given $U \subseteq [n]$ with $|U| \leq u$, we set $W = U$. 
Now, by iteratively selecting a vertex $v \in W$ with at most five neighbours in $G[W]$ and removing $\{v\} \cup \Gamma(v)$ from $W$, we obtain an independent set with at least $|U|/6$ vertices, and the proof is complete. 
\end{proof}

\subsection{Bounding the numbers of certain paths}
\label{sec:binomial_results:paths}
The results in this section give estimates for the numbers of certain paths. 
Their statements will contain certain exceptions, and, as we shall see, many of these complications are in fact  necessary.

\subsubsection{Preliminaries: the size of certain neighbourhoods} 
\label{sec:binomial_results:paths:preliminaries}
The following crude upper bound on the degree of every vertex readily follows from standard Chernoff bounds (Lemma~\ref{lem:chernoff}) -- we omit the straightforward details. 
\begin{lemma}
\label{lem:set:neighbourhood:size} 
Let $\cD$ denote the event that for every $v \in [n]$ we have $|\Gamma(v)| \leq npn^{2\epsilon}$. 
Then $\cD$ holds whp in $G_{n,p'}$. \qed 
\end{lemma}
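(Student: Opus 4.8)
The plan is to apply the Chernoff upper tail bound \eqref{eq:chernoff:upper:simple} to each vertex degree individually and then take a union bound over all $n$ vertices. First I would observe that in $G_{n,p'}$ with $p' = pn^{\epsilon}$, the degree $|\Gamma(v)|$ of a fixed vertex $v \in [n]$ is a sum of $n-1$ independent Bernoulli random variables, so it has mean $\mu = (n-1)p' \leq npn^{\epsilon}$. Since $\epsilon > 0$ is a fixed constant, for $n$ large enough we have $n^{\epsilon} \geq 7$, and therefore the target value satisfies $npn^{2\epsilon} \geq 7 \cdot npn^{\epsilon} \geq 7\mu$. Hence the hypothesis of \eqref{eq:chernoff:upper:simple} in Lemma~\ref{lem:chernoff} is met with $t = npn^{2\epsilon}$, which yields
\[
\PP\big[|\Gamma(v)| \geq npn^{2\epsilon}\big] \leq e^{-npn^{2\epsilon}}   .
\]

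Next I would take a union bound over the $n$ choices of $v$, obtaining $\PP[\neg\cD] \leq n \cdot e^{-npn^{2\epsilon}}$. Recalling from \eqref{eq:Cl-parameters} that $np = n^{1/(\ell-1)}$ grows polynomially in $n$, and $\epsilon>0$ is fixed, we have $npn^{2\epsilon} = \omega(\log n)$, so $n \cdot e^{-npn^{2\epsilon}} = o(1)$. This proves that $\cD$ holds whp in $G_{n,p'}$.

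There is no genuine obstacle here; the only point requiring a (trivial) moment's attention is checking the condition $t \geq 7\mu$ needed to invoke \eqref{eq:chernoff:upper:simple}, and this is precisely where the factor $n^{2\epsilon}$ in the statement — as opposed to the bare $n^{\epsilon}$ coming from the mean — provides the necessary slack. This is why the excerpt can safely omit the details.
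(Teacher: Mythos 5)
Your proof is correct and is precisely the "straightforward details" the paper omits: it invokes the same Chernoff bound \eqref{eq:chernoff:upper:simple} from Lemma~\ref{lem:chernoff} together with a union bound over vertices, and your observation that the extra $n^{\epsilon}$ factor supplies the slack needed for the hypothesis $t \geq 7\mu$ is exactly right.
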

With similar reasoning it is also not difficult to see that whp for all large sets $S$, in $G_{n,p'}$ we have, say, $|\Gamma(S)| \geq |S| np$, which is much larger than $|S|$. 
Intuitively, the next lemma thus implies that for most reasonable sized $A \subseteq [n]$, 
only a small proportion of $\Gamma(S)$ is contained in $N^{(\leq \ell-3)}(A,S \cup A)$. 
\begin{lemma}
\label{lem:edges:bounded}
Let $\cM$ denote the event that for all disjoint $A,S \subseteq [n]$ with $|A|,|S| \leq kn^{5\epsilon}$ we have 
\begin{equation}
\label{eq:lem:edges:bounded}
e\bigl(S,\; N^{(\leq \ell-3)}(A,S \cup A)\bigr) \leq k n^{4\ell\epsilon}   . 
\end{equation}
Then $\cM$ holds whp in $G_{n,p'}$. 
\end{lemma}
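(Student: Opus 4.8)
The plan is to derive $\cM$ from the maximum degree bound of Lemma~\ref{lem:set:neighbourhood:size} together with a Chernoff estimate, the decisive point being a simple independence observation: for disjoint $A,S$ the set $N^{(\leq \ell-3)}(A,S\cup A)$ depends only on the edges of $G_{n,p'}$ lying inside $[n]\setminus S$, and is therefore independent of the edges incident to $S$. Note that a crude first moment bound on the number of relevant paths does \emph{not} suffice here: although the expected number of paths of length at most $\ell-2$ from $A$ to $S$ turns out to be somewhat smaller than $kn^{4\ell\epsilon}$, it is not small enough to beat the $e^{O(kn^{5\epsilon}\log n)}$ union bound over all pairs $(A,S)$, so genuine exponential concentration is needed. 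Conditioning on the edges inside $[n]\setminus S$ makes $e\bigl(S,N^{(\leq\ell-3)}(A,S\cup A)\bigr)$ a sum of independent indicators, which is exactly what is required.

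First I would record the following consequence of the event $\cD$ from Lemma~\ref{lem:set:neighbourhood:size}. On $\cD$ every vertex has degree at most $npn^{2\epsilon}$, so since $N^{(j)}(A,X)\subseteq\Gamma\bigl(N^{(j-1)}(A,X)\bigr)$, for disjoint $A,S$ with $|A|\leq kn^{5\epsilon}$ and every $0\leq j\leq\ell-3$ we get $|N^{(j)}(A,S\cup A)|\leq |A|(npn^{2\epsilon})^j\leq (np)^{j+1}n^{(2j+6)\epsilon}\leq p^{-1}n^{2\ell\epsilon}$, using $k\leq u\leq npn^\epsilon$ (from \eqref{eq:Cl-parameters2} and \eqref{eq:Cl-parameters3}, valid for $n$ large) and $(np)^{\ell-2}=p^{-1}$ (from \eqref{eq:Cl-parameters}). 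Summing over $j$, on $\cD$ we obtain $|N^{(\leq\ell-3)}(A,S\cup A)|\leq\ell p^{-1}n^{2\ell\epsilon}$ for all such $A,S$.

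Next, fix disjoint $A,S\subseteq[n]$ with $|A|,|S|\leq kn^{5\epsilon}$, let $G_1$ be the restriction of $G_{n,p'}$ to the pairs inside $[n]\setminus S$, and set $W=N^{(\leq\ell-3)}(A,S\cup A)$. Since $N^{(0)}(A,S\cup A)=A\subseteq[n]\setminus S$ and $N^{(j)}(A,S\cup A)\subseteq V_j(S\cup A)\subseteq[n]\setminus(S\cup A)$ for $j\geq1$, each neighbourhood $N^{(j)}(A,S\cup A)$ — and hence $W$ and the event $\cW(A,S):=\{|W|\leq\ell p^{-1}n^{2\ell\epsilon}\}$ — is $G_1$-measurable. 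Moreover $S\cap W=\emptyset$, so conditional on $G_1$ the number of edges $e(S,W)$ is a sum of $|S|\,|W|$ independent Bernoulli$(p')$ variables, one for each pair between $S$ and $W$, whose presence is independent of $G_1$ as each such pair has an endpoint in $S$. On $\cW(A,S)$ its conditional mean is at most $|S|\cdot\ell p^{-1}n^{2\ell\epsilon}\cdot p'\leq\ell kn^{(2\ell+6)\epsilon}$, which for $n$ large is at most $\tfrac17 kn^{4\ell\epsilon}$ since $4\ell>2\ell+6$ for $\ell\geq4$. Hence, by the Chernoff bound \eqref{eq:chernoff:upper:simple},
\[
\PP\bigl[\,e(S,W)>kn^{4\ell\epsilon}\mid G_1\,\bigr]\leq e^{-kn^{4\ell\epsilon}}\qquad\text{on the event }\cW(A,S).
\]

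Finally I would assemble these estimates. Since $\cD\subseteq\cW(A,S)$ for every admissible pair, and $\cW(A,S)$ is $G_1$-measurable,
\[
\PP[\neg\cM]\leq\PP[\neg\cD]+\sum_{A,S}\EE\bigl[\mathbf 1_{\cW(A,S)}\,\PP[\,e(S,W)>kn^{4\ell\epsilon}\mid G_1\,]\bigr]\leq\PP[\neg\cD]+\sum_{A,S}e^{-kn^{4\ell\epsilon}},
\]
where the sums are over disjoint $A,S\subseteq[n]$ with $|A|,|S|\leq kn^{5\epsilon}$. The first term is $o(1)$ by Lemma~\ref{lem:set:neighbourhood:size}, and the number of such pairs $(A,S)$ is at most $\bigl(\sum_{a\leq kn^{5\epsilon}}\binom na\bigr)^2\leq e^{O(kn^{5\epsilon}\log n)}$; since $kn^{4\ell\epsilon}=\omega(kn^{5\epsilon}\log n)$ (because $4\ell>5$ and $\epsilon$ is a fixed constant), the second term is $o(1)$ as well, which gives $\PP[\neg\cM]=o(1)$. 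The one genuinely delicate point is the measurability/independence observation in the third paragraph; everything else is bookkeeping with the parameters of \eqref{eq:Cl-parameters}--\eqref{eq:Cl-parameters3} and the elementary degree bound of $\cD$.
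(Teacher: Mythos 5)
Your proof is correct and follows essentially the same route as the paper: both hinge on the observation that $N^{(\leq\ell-3)}(A,S\cup A)$ is determined without exposing any pair between $S$ and that set, so that conditionally the relevant edges are independent Bernoulli$(p')$ variables to which the Chernoff bound \eqref{eq:chernoff:upper:simple} applies, with the same size estimate on the neighbourhood via $\cD$ and the same union bound over pairs $(A,S)$. The only (cosmetic) difference is the formalization of the conditioning: the paper conditions on the realized value $Y$ of the neighbourhood and argues via an exploration process, whereas you condition on the edges inside $[n]\setminus S$ and note the neighbourhood is measurable with respect to them.
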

\begin{proof} 
Let $\Psi$ contain all pairs $(A,S)$ with disjoint $A,S \subseteq [n]$ satisfying $|A|,|S| \leq kn^{5\epsilon}$.  
Given $\psi = (A,S) \in \Psi$, let $\cM_{\psi}$ denote the event that \eqref{eq:lem:edges:bounded} holds, and let $\cY_{\psi}$ contain all $Y \subseteq A \cup \bigcup_{1 \leq d \leq \ell-3} V_{d}(S \cup A)$ with $|Y| \leq (np n^{2\epsilon})^{\ell-2}n^{5\epsilon}$. 
Given $\psi = (A,S) \in \Psi$ and $Y \in \cY_{\psi}$, let $\cN_{\psi,Y}$ denote the event that $N^{(\leq \ell-3)}(A,S \cup A) = Y$. 
Using $k \leq npn^{\epsilon}$, it is not difficult to see that whenever $\cD$ holds, then for every $\psi \in \Psi$ some $\cN_{\psi,Y}$ with $Y \in \cY_{\psi}$ holds. 
Furthermore, $\neg\cM$ clearly implies that some $\cM_{\psi}$ with $\psi \in \Psi$ fails. 
So, we obtain 
\begin{equation*}
\PP[\neg \cM] \leq \PP[\neg \cD] + \sum_{\psi = (A,S) \in \Psi} \ \sum_{Y \in \cY_{\psi}} \PP[\neg \cM_{\psi} \cap  \cN_{\psi,Y}]   . 
\end{equation*} 
Note that for every $\psi = (A,S) \in \Psi$ the events $\cN_{\psi,Y}$ are mutually exclusive. 
So, using $|\Psi| \leq n^{2kn^{5\epsilon}}$ and that $\cD$ holds whp by Lemma~\ref{lem:set:neighbourhood:size}, to finish the proof it is enough to show that for every $\psi = (A,S) \in \Psi$ and $Y \in \cY_{\psi}$ we have 
\begin{equation}
\label{eq:lem:edges:bounded:prob}
\PP[\neg \cM_{\psi} \mid \cN_{\psi,Y}] \leq n^{-\omega(kn^{5\epsilon})}   .
\end{equation} 
Observe that we can find $Y=N^{(\leq \ell-3)}(A,S \cup A)$ by starting with $N^{(0)}(A,S \cup A) = A$, and then iteratively testing vertices in $V_{d}(S \cup A)$ to see whether they are adjacent to $N^{(d-1)}(A,S \cup A)$, up to $d=\ell-3$. 
Since $S$ is disjoint from $A$ and all $V_{d}(S \cup A)$ with $1 \leq d \leq \ell-3$, this exploration has not revealed any pairs between $S$ and $Y$. 
We deduce that, conditioned on $\cN_{\psi,Y}$, all edges between $S$ and $Y=N^{(\leq \ell-3)}(A,S \cup A)$ are included independently with probability $p'=pn^{\epsilon}$. 
Now, using $(np)^{\ell-2}=p^{-1}$ and $\ell \geq 4$, the expected number of these edges is bounded by 
\[
|S| \cdot |Y| \cdot p' \leq kn^{5\epsilon} \cdot (np n^{2\epsilon})^{\ell-2}n^{5\epsilon} \cdot p n^{\epsilon} = k n^{(2\ell+7)\epsilon} \leq k n^{(4\ell-1)\epsilon}   . 
\] 
Thus standard Chernoff bounds, see e.g.\ \eqref{eq:chernoff:upper:simple} of Lemma~\ref{lem:chernoff}, imply \eqref{eq:lem:edges:bounded:prob}, completing the proof. 
\end{proof}

\subsubsection{Paths ending in the neighbourhood of another set} 
\label{sec:binomial_results:paths:neighbourhood}
We start with a technical lemma, which will be used in the subsequent proofs of Lemmas~\ref{lem:path:endpoints} and~\ref{lem:path:endpoints:pairs}. 
\begin{lemma}
\label{lem:path:endpoints:shortest}
Let $\cQ_1$ denote the event that for all $v \in [n]$ and $A,X \subseteq [n]$ with $A \subseteq X$ and $|A|,|X| \leq k n^{5\ell\epsilon}$, for every $2 \leq j \leq \ell-1$ and $0 \leq d \leq \ell-3$ there are at most at most $(np)^{j-1} n^{9 \ell\epsilon}$ vertices $w \in N^{(\leq d)}(A,X)$ for which there exists a path 
\begin{equation}
\label{eq:lem:path:endpoints:shortest}
 v=w_0 \cdots w_{j}=w \qquad \text{ with } \qquad \{w_{0},\ldots,w_{j-1}\} \cap N^{(\leq d)}(A,X) = \emptyset   . 
\end{equation}
Then $\cQ_1$ holds whp in $G_{n,p'}$. 
\end{lemma}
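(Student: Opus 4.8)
The plan is to follow the template of the proof of Lemma~\ref{lem:edges:bounded}. Set $p'=pn^{\epsilon}$, fix $v\in[n]$, a pair $A\subseteq X$ with $|A|,|X|\leq kn^{5\ell\epsilon}$, and indices $2\leq j\leq\ell-1$ and $0\leq d\leq\ell-3$, and write $Y=N^{(\leq d)}(A,X)$. First I would set aside the event $\neg\cD$ (Lemma~\ref{lem:set:neighbourhood:size}), so that I may assume $|\Gamma(w)|\leq npn^{2\epsilon}$ for all $w$; since $N^{(d')}=\Gamma(N^{(d'-1)})\cap V_{d'}$ this gives $|Y|\leq\ell\,|A|(npn^{2\epsilon})^{\ell-3}$, and combining $|A|\leq kn^{5\ell\epsilon}\leq np\,n^{O(\ell\epsilon)}$ with the numerical identity $(np)^{\ell-2}p=1$ (a consequence of the choice of $p$ in \eqref{eq:Cl-parameters}) yields $|Y|\leq(np)^{\ell-2}n^{O(\ell\epsilon)}$ and, crucially, $|Y|\,p'\leq n^{O(\ell\epsilon)}$: although $Y$ may be large, only an $n^{O(\ell\epsilon)}$ fraction of a typical neighbourhood lies in it.

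The combinatorial point, exactly as in Lemma~\ref{lem:edges:bounded}, is that the exploration determining $Y=N^{(\leq d)}(A,X)$ examines only pairs with one endpoint in $N^{(\leq d-1)}(A,X)\subseteq Y$; since the vertices $w_0,\dots,w_{j-1}$ in \eqref{eq:lem:path:endpoints:shortest} avoid $Y$, every edge of such a path is, conditionally on this exploration, either still unexposed (hence present independently with probability $p'$) or already known to be a non-edge. I would then carry out a \emph{second}, $(A,X)$-independent exposure: the ordinary breadth-first search from $v$ in $G_{n,p'}$, taken only to depth $j-1$, yielding layers $B_0(v),\dots,B_{j-1}(v)$; a single Chernoff bound per layer (Lemma~\ref{lem:chernoff}) shows that $|B_{j-1}(v)|\leq(np)^{j-1}n^{O(\epsilon)}$ holds with probability $1-o(1)$ uniformly over all $v$ and $j$, since it fails with probability at most $e^{-np}$ and $np=n^{1/(\ell-1)}=\omega(\log n)$ comfortably beats the union bound over the $n$ choices of $v$. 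This BFS exposure touches only pairs incident to $B_{\leq j-2}(v)$, so every pair $\{w_{j-1},w\}$ with $w_{j-1}\in B_{j-1}(v)\setminus B_{\leq j-2}(v)$ and $w\notin B_{\leq j-1}(v)$ remains unexposed after both explorations; moreover an easy inductive argument shows that an admissible endpoint $w$ either lies in $B_{\leq j-1}(v)$ (so at most $|B_{\leq j-1}(v)|\leq(np)^{j-1}n^{O(\epsilon)}$ of those) or is witnessed by precisely such a fresh pair. Hence, conditionally on $\{N^{(\leq d)}(A,X)=Y\}$ and on the BFS, the number of admissible $w$ is at most $(np)^{j-1}n^{O(\epsilon)}$ plus a binomial variable with $|Y|$ trials and success probability $|B_{j-1}(v)|\,p'$.

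That binomial variable has mean $|B_{j-1}(v)|\cdot|Y|\,p'\leq(np)^{j-1}n^{O(\ell\epsilon)}$, so by the Chernoff bound \eqref{eq:chernoff:upper:simple} the total number of admissible $w$ exceeds $(np)^{j-1}n^{9\ell\epsilon}$ with probability at most $e^{-(np)^{j-1}n^{9\ell\epsilon}/2}\leq e^{-n^{1/(\ell-1)+9\ell\epsilon}/2}$ — the $n^{9\ell\epsilon}$ slack built into the statement is exactly what makes the exponent this large. As in Lemma~\ref{lem:edges:bounded}, the sum over $Y$ costs nothing because the events $\{N^{(\leq d)}(A,X)=Y\}$ are mutually exclusive, and a final union bound over the at most $n^{2kn^{5\ell\epsilon}}$ pairs $(A,X)$ and the $O(\ell^2)$ choices of $(j,d)$ and $n$ choices of $v$ then closes the argument, since $n^{1/(\ell-1)+9\ell\epsilon}$ dominates $kn^{5\ell\epsilon}\log n$ with room to spare. \textbf{The main obstacle} is making the concentration survive this enormous union bound: the quantity in question has expectation only $(np)^{j-1}n^{O(\ell\epsilon)}$, so a first-moment argument with Markov's inequality is hopeless, and the quantity is not literally a sum of independent indicators — it is the two-stage exposure, together with the bookkeeping that the witnessing edges stay fresh, that turns it into one. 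The other delicate point is keeping the binomial mean below $(np)^{j-1}n^{9\ell\epsilon}$ even though $|Y|$ can be as large as $(np)^{\ell-2}n^{O(\ell\epsilon)}$, which is exactly where the identity $(np)^{\ell-2}p=1$ is used.
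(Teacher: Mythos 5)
Your proposal is correct and follows essentially the same route as the paper: a two-round exposure (first determining $Y=N^{(\leq d)}(A,X)$, then a $(j-1)$-step neighbourhood exploration from $v$), the observation that the witnessing pairs between that neighbourhood and $Y$ are either unexposed or already known non-edges, stochastic domination by a binomial of mean $(np)^{j-1}n^{O(\ell\epsilon)}$ via $|Y|p'=n^{O(\ell\epsilon)}$, a Chernoff bound, and a union bound over the mutually exclusive conditioning events and the $n^{O(kn^{5\ell\epsilon})}$ tuples $(v,A,X,j,d)$. The only (cosmetic) difference is that the paper explores the iterated neighbourhood $\Gamma^{(j-1)}(v,Y)$ that avoids $Y$ at every step, so every admissible $w$ lies in $\Gamma(Z)\cap Y$ directly, whereas your ordinary BFS layers force the extra dichotomy "$w\in B_{\leq j-1}(v)$ or witnessed by a fresh edge from the exact layer $B_{j-1}(v)$" — both work.
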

\begin{proof}
Let $\Psi$ contain all tuples $(v,A,X,j,d)$ with $v \in [n]$, $A,X \subseteq [n]$, $2 \leq j \leq \ell-1$ and  $0 \leq d \leq \ell-3$ satisfying $A \subseteq X$ and $|A|,|X| \leq k n^{5\ell\epsilon}$.  
Given $\psi = (v,A,X,j,d) \in \Psi$, by $\cQ_{\psi}$ we denote the event that there are at most $(np)^{j-1} n^{9\ell\epsilon}$ vertices $w \in N^{(\leq d)}(A,X)$ for which there exists a path satisfying \eqref{eq:lem:path:endpoints:shortest}. 
Clearly, $\neg\cQ_1$ implies that some $\cQ_{\psi}$ with $\psi \in \Psi$ fails.

Next, given $\psi = (v,A,X,j,d) \in \Psi$, we denote by $\cY_\psi$  the set of pairs $(Y,Z)$ with $Y \subseteq A \cup \bigcup_{1 \leq d' \leq d} V_{d'}(X)$ and $Z \subseteq [n] \setminus Y$ satisfying $|Y| \leq (np n^{2\epsilon})^{d+1}n^{5\ell\epsilon}$ and $|Z| \leq (npn^{2\epsilon})^{j-1}$.  
Furthermore, for every $Y \subseteq [n]$ and $v \in [n]$ we inductively define 
\begin{equation}
\label{eq:def:gamma:vY}
\Gamma^{(0)}(v,Y) = \{v\} \setminus Y \qquad \text{ and } \qquad  \Gamma^{(i+1)}(v,Y) = \Gamma(\Gamma^{(i)}(v,Y))\setminus Y   .
\end{equation} 
Given $\psi = (v,A,X,j,d) \in \Psi$ and $\phi = (Y,Z) \in \cY_\psi$, let $\cN_{\psi,\phi}$ be the event that $N^{(\leq d)}(A,X) = Y$ and $\Gamma^{(j-1)}(v,Y) = Z$. 
Whenever $\cD$ holds, using $k \leq npn^{\epsilon}$ it is easy to see that for every $\psi \in \Psi$ some $\cN_{\psi,\phi}$ with $\phi \in \cY_\psi$ holds. 
Putting things together, we obtain 
\begin{equation*}
\PP[\neg\cQ_1] \leq \PP[\neg \cD] + \sum_{\psi = (v,A,X,j,d) \in \Psi} \ \sum_{\phi=(Y,Z) \in \cY_\psi} \PP[\neg \cQ_{\psi} \cap \cN_{\psi,\phi}]   . 
\end{equation*}
Since $\cD$ holds whp by Lemma~\ref{lem:set:neighbourhood:size}, using $|\Psi| \leq n^{3kn^{5\ell\epsilon}}$ and that for every $\psi \in \Psi$ the events $\cN_{\psi,\phi}$ are mutually exclusive, to complete the proof it suffices to show that for every $\psi = (v,A,X,j,d) \in \Psi$ and $\phi= (Y,Z) \in \cY_\psi$ we have 
\begin{equation}
\label{eq:lem:path:endpoints:shortest:prob}
\PP[\neg \cQ_{\psi} \mid \cN_{\psi,\phi}] \leq n^{-\omega(kn^{5\ell\epsilon})}   .
\end{equation} 
Recall that on $\cN_{\psi,\phi}$ we have $Y=N^{(\leq d)}(A,X)$ and $Z=\Gamma^{(j-1)}(v,Y)$. 
Every $w \in Y$ for which there exists a path satisfying \eqref{eq:lem:path:endpoints:shortest} is contained in $\Gamma(Z)$, and so whenever $\cQ_{\psi}$ fails we deduce $|\Gamma(Z) \cap Y| \geq (np)^{j-1} n^{9\ell\epsilon}$, which in turn implies 
\begin{equation}
\label{eq:lem:path:endpoints:shortest:prob:badevent}
e(Y,Z) \geq (np)^{j-1} n^{9\ell\epsilon}   . 
\end{equation} 
Next we analyse the distribution of the edges between $Y$ and $Z$ conditional on $\cN_{\psi,\phi}$. 
We can iteratively determine $Y=N^{(\leq d)}(A,X)$ as in the proof of Lemma~\ref{lem:edges:bounded}. 
Then, given $Y$, we can similarly find $Z = \Gamma^{(j-1)}(v,Y)$; by \eqref{eq:def:gamma:vY} this can clearly be done without testing any pairs between $Y$ and $Z$. 
It certainly can happen that during the first exploration, i.e., when determining $Y$, we have already revealed some pairs between $Y$ and $Z$, consider e.g.\ the case where $Z \cap V_1(X) \neq \emptyset$. However, by construction all such pairs are \emph{non-edges}. 
Therefore the number of edges between $Y$ and $Z$ is stochastically dominated by a binomial distribution with $|Y| \cdot |Z|$ trials and success probability $p'=pn^{\epsilon}$. 
Using $d \leq \ell-3$ and $j \leq \ell-1$ as well as $(np)^{d+1} \leq (np)^{\ell-2} = p^{-1}$, the expected value of the corresponding binomial random variable is at most 
\[
|Y| \cdot |Z| \cdot p' \leq (np n^{2\epsilon})^{d+1}n^{5\ell\epsilon} \cdot (np n^{2\epsilon})^{j-1} \cdot p n^{\epsilon} 
\leq (np)^{j-1} n^{(5\ell+2d+2j+1)\epsilon} \leq (np)^{j-1} n^{(9\ell-1)\epsilon}   . 
\]
So, since $j \geq 2$ and $k \leq npn^{\epsilon}$, standard Chernoff bounds show that \eqref{eq:lem:path:endpoints:shortest:prob:badevent} holds with probability at most $e^{-kn^{8\ell\epsilon}}$, see e.g.\ \eqref{eq:chernoff:upper:simple} of Lemma~\ref{lem:chernoff}. 
This establishes \eqref{eq:lem:path:endpoints:shortest:prob} and thus completes the proof. 
\end{proof}

Given a vertex $v \in [n]$, we expect that roughly $(np')^{\ell-2}$ vertices $w \in [n]$ are endpoints of a path $v=w_0 \cdots w_{\ell-2}=w$. 
Loosely speaking, the next lemma states that there are significantly fewer such vertices $w$ if we only count endpoints in a certain restricted set and forbid some exceptional paths. 
For the argument of Section~\ref{sec:good_configurations_exist} it is important to observe that $\cP_1$ is monotone decreasing. 
\begin{lemma}
\label{lem:path:endpoints}
Let $\cP_1$ denote the event that for all disjoint $A,S \subseteq [n]$ with $|A|,|S|\leq k$ there exists $X \subseteq [n]$ with $|X| \leq k n^{5\ell\epsilon}$, 
such that for every $v \in S$ there are at most $(np)^{\ell-3} n^{15\ell\epsilon}$ vertices $w \in N^{(\ell-3)}(A,X)$ for which there exists a path 
\begin{equation}
\label{eq:lem:path:endpoints}
 v=w_0 \cdots w_{\ell-2}=w \qquad \text{ with } \qquad w_1 \notin A   .
\end{equation}
Then $\cP_1$ holds whp in $G_{n,p'}$. 
\end{lemma}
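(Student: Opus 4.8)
The plan is to pass to $G_{n,p'}$ with $p'=pn^{\epsilon}$ and to show that $\cP_1$ holds deterministically whenever the events $\cD$ of Lemma~\ref{lem:set:neighbourhood:size}, $\cM$ of Lemma~\ref{lem:edges:bounded} and $\cQ_1$ of Lemma~\ref{lem:path:endpoints:shortest} all hold; since each of these holds whp, so does $\cP_1$. The heart of the matter is a good choice of the auxiliary set $X$. Given disjoint $A,S\subseteq[n]$ with $|A|,|S|\le k$, I would set $N=N^{(\le \ell-3)}(A,S\cup A)$ and take
\[
X=(S\cup A)\cup\bigl(\Gamma(S)\cap N\bigr).
\]
On $\cM$ we have $|\Gamma(S)\cap N|\le e\bigl(S,N^{(\le \ell-3)}(A,S\cup A)\bigr)\le kn^{4\ell\epsilon}$, so $|X|\le kn^{5\ell\epsilon}$ for $n$ large, and $A\subseteq X$; thus $X$ is an admissible witness.

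The point of this $X$ is that it kills all the ``short'' paths. By the monotonicity \eqref{eq:neighbourhood:monotone} we have $N^{(\le \ell-3)}(A,X)\subseteq N$, while $N^{(j)}(A,X)\subseteq V_j(X)$ is disjoint from $X$ for $j\ge 1$, so $N^{(\le \ell-3)}(A,X)\cap X=A$. Combining these two facts, for every $v\in S$ every neighbour of $v$ lying in $N^{(\le \ell-3)}(A,X)$ already lies in $\Gamma(S)\cap N\subseteq X$, hence in $A$; that is, $\Gamma(v)\cap\bigl(N^{(\le \ell-3)}(A,X)\setminus A\bigr)=\emptyset$, and moreover $v\notin N^{(\le \ell-3)}(A,X)$ since $v\in S\subseteq X$ but $v\notin A$. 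Now fix $v\in S$ and a path $v=w_0w_1\cdots w_{\ell-2}=w$ with $w_1\notin A$ and $w\in N^{(\ell-3)}(A,X)$. By the previous sentence both $w_0=v$ and $w_1$ avoid $N^{(\le \ell-3)}(A,X)$, so if $j^{\ast}$ denotes the least index with $w_{j^{\ast}}\in N^{(\le \ell-3)}(A,X)$ — which exists and satisfies $j^{\ast}\le \ell-2$ because $w_{\ell-2}=w\in N^{(\ell-3)}(A,X)$ — then $2\le j^{\ast}\le \ell-2$ and the initial segment $w_0\cdots w_{j^{\ast}-1}$ is disjoint from $N^{(\le \ell-3)}(A,X)$.

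It then remains to count, and here Lemma~\ref{lem:path:endpoints:shortest} does the work. For each $2\le j\le \ell-2$, applying that lemma to $v$, $A$, $X$ (admissible, since $A\subseteq X$ and $|A|,|X|\le kn^{5\ell\epsilon}$) with this $j$ and $d=\ell-3$ bounds the number of choices for $w_{j^{\ast}}$ by $(np)^{j-1}n^{9\ell\epsilon}$, while on $\cD$ there are at most $(npn^{2\epsilon})^{\ell-2-j}$ continuations of a path from $w_{j^{\ast}}$ of length $\ell-2-j$. Summing over $j$ gives at most
\[
\sum_{j=2}^{\ell-2}(np)^{j-1}n^{9\ell\epsilon}(npn^{2\epsilon})^{\ell-2-j}\ \le\ \ell\,(np)^{\ell-3}n^{11\ell\epsilon}\ \le\ (np)^{\ell-3}n^{15\ell\epsilon}
\]
vertices $w\in N^{(\ell-3)}(A,X)$ of the required type, which is exactly the bound in $\cP_1$, with room to spare.

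The main obstacle, and the only non-routine point, is finding the right $X$: one must enlarge $S\cup A$ just enough to absorb the (few, by $\cM$) neighbours of $S$ inside the layered neighbourhood of $A$, and then exploit the monotonicity \eqref{eq:neighbourhood:monotone} of these neighbourhoods in the second argument to see that this absorption survives the passage from $S\cup A$ to the enlarged $X$. This is precisely what eliminates the problematic cases $j^{\ast}\in\{0,1\}$ — a single edge from $v$ straight into a deep layer would otherwise contribute on the order of $(np)^{\ell-2}$ endpoints, a factor $np$ too many — leaving only $j^{\ast}\ge 2$, which Lemma~\ref{lem:path:endpoints:shortest} controls comfortably. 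Everything else is bookkeeping with the degree bound $\cD$ and the size constraints built into Lemmas~\ref{lem:edges:bounded} and~\ref{lem:path:endpoints:shortest}.
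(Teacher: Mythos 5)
Your proposal is correct and follows essentially the same route as the paper: the same witness $X=S\cup A\cup(\Gamma(S)\cap N^{(\le\ell-3)}(A,S\cup A))$ (the paper's $V_{S,A}$ is exactly this extra set), the same use of $\cM$ to bound $|X|$, the same observation that this absorption together with \eqref{eq:neighbourhood:monotone} forces the first entry index $j^{\ast}$ into $N^{(\le\ell-3)}(A,X)$ to be at least $2$, and the same count via $\cQ_1$ with $d=\ell-3$ followed by the degree bound $\cD$. No gaps.
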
  
\begin{proof} 
By Lemmas~\ref{lem:set:neighbourhood:size}, \ref{lem:edges:bounded} and~\ref{lem:path:endpoints:shortest}  the event $\cD \cap \cM \cap \cQ_1$ holds whp. In the following we are going to argue that for every graph $G$ satisfying those properties, $\cP_1$ holds as well. 
As this claim is purely deterministic, it suffices to prove it for fixed disjoint $A,S \subseteq [n]$ with $|A|,|S|\leq k$. 
By $\cM$ there are at most $kn^{4\ell\epsilon}$ edges between $S$ and $N^{(\leq \ell-3)}(A,S \cup A)$. 
Let $V_{S,A}$ contain the endpoints of those edges and define 
\begin{equation}
\label{eq:lem:path:endpoints:def:X}
X= A \cup S \cup V_{S,A}   .
\end{equation}
Note that $|X| \leq kn^{5\ell\epsilon}$. 
Given $v \in [n]$, by $W_{v}$ we denote the set of $w \in N^{(\ell-3)}(A,X)$ for which there exists a path satisfying \eqref{eq:lem:path:endpoints}. 
To finish the proof, it suffices to show that for every $v \in S$ we have 
\begin{equation}
\label{eq:lem:path:endpoints:goal}
|W_{v}| \leq (np)^{\ell-3} n^{10\ell\epsilon}   .
\end{equation} 
Fix $v \in S \subseteq X$. 
Since $S \cap A = \emptyset$, for every path $v=w_0 \cdots w_{\ell-2}=w$ with  $w \in N^{(\ell-3)}(A,X)$ there exists $1 \leq j \leq \ell-2$ such that 
\begin{equation}
\label{eq:lem:path:endpoints:condition}
\{w_{0},\ldots,w_{j-1}\} \cap N^{(\leq \ell-3)}(A,X) = \emptyset \qquad \text{ and } \qquad w_j \in N^{(\leq \ell-3)}(A,X)   . 
\end{equation} 
Recall that by assumption $w_1 \notin A$. 
So, by \eqref{eq:neighbourhood:monotone} and \eqref{eq:lem:path:endpoints:def:X} we may restrict our attention to the case $j \geq 2$, since $S$ has no neighbours in $N^{(\leq \ell-3)}(A,X) \setminus A$. 
Now, as $\cQ_1$ holds, considering $d \leftarrow \ell-3$, for every $2 \leq j \leq \ell-2$ we deduce that there are at most $(np)^{j-1}n^{9\ell\epsilon}$ vertices $w_j \in N^{(\leq \ell-3)}(A,X)$ for which there exists a path  $v=w_0 \cdots w_j$ satisfying  \eqref{eq:lem:path:endpoints:condition}. 
Recall that the degree of every vertex is at most $npn^{2\epsilon}$ by $\cD$. 
So, given $ w_j$, there are at most $(npn^{2\epsilon})^{\ell-j-2}$ vertices $w \in N^{(\ell-3)}(A,X)$ for which there exists a path $w_j \cdots w_{\ell-2}=w$. 
Putting things together, we deduce that 
\[
|W_{v}| \leq \sum_{2 \leq j \leq \ell-2} (np)^{j-1} n^{9\ell\epsilon} \cdot (npn^{2\epsilon})^{\ell-j-2} \leq (np)^{\ell-3} n^{15 \ell\epsilon}    . 
\]
As explained, this implies $\cP_1$, and the proof is complete.  
\end{proof}
Note that in Lemma~\ref{lem:path:endpoints} a condition of the form $w_1 \notin A$ is necessary.   
Indeed, standard Chernoff bounds imply that whp every vertex has degree $\Omega(np')$. 
Furthermore, e.g.\ with a similar argument as in the proof of Lemma~$10.6$ in~\cite{Bollobas2001RandomGraphs}, one can show that whp for all choices of $A,S,X$, for all $Z \subseteq A$ with $|Z|\geq np$ we have, say, $|N^{(\ell-3)}(Z,X)| \geq  |Z| (np)^{\ell-3} \geq (np)^{\ell-2}$. 
So, by picking $A \in \binom{[n]}{k}$ such that it contains at least $np = o(k)$ neighbours of some vertex $v^*$, we have at least $(np)^{\ell-2}$ vertices $w \in N^{(\ell-3)}(A,X)$ which are endpoints of paths $v^*=w_0 \cdots w_{\ell-2}=w$ with $w_1 \in A$, violating the claimed bound.

\subsubsection{Paths connecting two sets} 
\label{sec:binomial_results:paths:sets}
Given $A,B,X \subseteq [n]$, for every $j \geq 1$ and $0 \leq d \leq \ell-3$, we say that $w_0 \cdots w_j = v_d \cdots v_0$ is a  \emph{$(j,d)$-path} wrt.\ $(A,B,X)$ if $v_0 \in A$, $w_0 \in B$ and $v_{d'} \in V_{d'}(X)$ for all $1 \leq d' \leq d$, cf.\ Figure~\ref{fig:jdpaths}. 
\begin{figure}[t]
	\centering
  \setlength{\unitlength}{1bp}%
  \begin{picture}(154.02, 88.51)(0,0)
  \put(0,0){\includegraphics{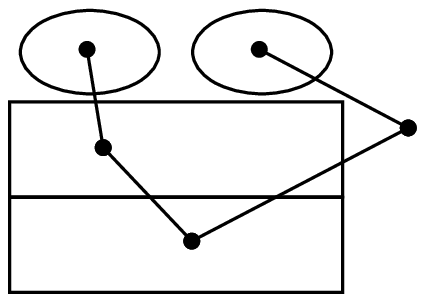}}
  \put(42.35,71.87){\fontsize{11.38}{13.66}\selectfont \makebox[0pt]{$A$}}
  \put(93.56,50.52){\fontsize{11.38}{13.66}\selectfont \makebox[0pt]{$V_1$}}
  \put(20.14,74.04){\fontsize{11.38}{13.66}\selectfont \makebox[0pt]{$v_0$}}
  \put(68.60,74.04){\fontsize{11.38}{13.66}\selectfont \makebox[0pt]{$w_0$}}
  \put(25.20,46.02){\fontsize{11.38}{13.66}\selectfont \makebox[0pt]{$v_1$}}
  \put(59.22,12.29){\fontsize{11.38}{13.66}\selectfont \makebox[0pt]{$v_2\!=\!w_2$}}
  \put(91.39,71.87){\fontsize{11.38}{13.66}\selectfont \makebox[0pt]{$B$}}
  \put(93.56,22.74){\fontsize{11.38}{13.66}\selectfont \makebox[0pt]{$V_2$}}
  \put(130.00,48.86){\fontsize{11.38}{13.66}\selectfont \makebox[0pt]{$w_1$}}
  \end{picture}%
\hspace{3.0em}
  \setlength{\unitlength}{1bp}%
  \begin{picture}(140.02, 88.51)(0,0)
  \put(0,0){\includegraphics{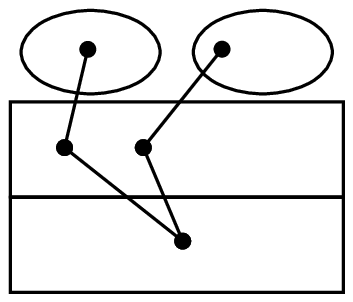}}
  \put(48.33,71.87){\fontsize{11.38}{13.66}\selectfont \makebox[0pt]{$A$}}
  \put(99.54,50.52){\fontsize{11.38}{13.66}\selectfont \makebox[0pt]{$V_1$}}
  \put(26.12,75.17){\fontsize{11.38}{13.66}\selectfont \makebox[0pt]{$v_0$}}
  \put(81.67,75.17){\fontsize{11.38}{13.66}\selectfont \makebox[0pt]{$w_0$}}
  \put(19.84,43.75){\fontsize{11.38}{13.66}\selectfont \makebox[0pt]{$v_1$}}
  \put(62.50,12.29){\fontsize{11.38}{13.66}\selectfont \makebox[0pt]{$v_2\!=\!w_2$}}
  \put(97.37,71.87){\fontsize{11.38}{13.66}\selectfont \makebox[0pt]{$B$}}
  \put(99.54,22.74){\fontsize{11.38}{13.66}\selectfont \makebox[0pt]{$V_2$}}
  \put(53.86,43.75){\fontsize{11.38}{13.66}\selectfont $w_1$}
  \end{picture}%
	\caption{\label{fig:jdpaths}Examples of $(2,2)$-paths for $\ell=5$. As usual, solid lines represent edges; for the other pairs there are no restrictions. Note that $w_1$ may be in $A \cup B$ or the vertex classes $V_1 \cup V_2$.} 
\end{figure}%
Intuitively, the next technical result states that the number of $(j,d)$-paths is not `too large' if we allow for deleting a few edges. 
\begin{lemma}
\label{lem:path:endpoints:pairs:deletion}
Let $\cQ_2$ denote the event that for all $A,B \subseteq [n]$ with $|A|,|B|\leq k$ there exists $F \subseteq \binom{[n]}{2}$ with $|F| \leq k n^{2\epsilon}$, such that for every $1 \leq j \leq \ell-1$ and $0 \leq d \leq \ell-4$ the number of $(j,d)$-paths wrt.\ $(A,B,A \cup B)$ that are edge disjoint from $F$ is bounded by $k^2(np)^{j-3} n^{4 \ell\epsilon}$. 
Then $\cQ_2$ holds whp in $G_{n,p'}$. 
\end{lemma}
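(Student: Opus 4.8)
The statement to prove is Lemma~\ref{lem:path:endpoints:pairs:deletion}: in $G_{n,p'}$, whp for all $A,B$ of size at most $k$ there is a deletion set $F$ of size at most $kn^{2\epsilon}$ so that for all $1\le j\le\ell-1$ and $0\le d\le\ell-4$, the number of $(j,d)$-paths wrt $(A,B,A\cup B)$ edge-disjoint from $F$ is at most $k^2(np)^{j-3}n^{4\ell\epsilon}$.

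This is essentially a concentration statement for a subgraph count with a deletion correction, so the natural tool is the Deletion Lemma (Lemma~\ref{lem:deletion_lemma}).

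**The approach.** The plan is to fix $A,B\subseteq[n]$ with $|A|,|B|\le k$, fix $j$ and $d$, and apply the Deletion Lemma to the family $\cS$ of edge sets of potential $(j,d)$-paths wrt $(A,B,A\cup B)$. A potential $(j,d)$-path is specified by a sequence $w_0\cdots w_j = v_d\cdots v_0$ with $v_0\in A$, $w_0\in B$, and $v_{d'}\in V_{d'}(A\cup B)$ for $1\le d'\le d$; its edge set has $j$ edges. First I would compute the expected number $\mu$ of such paths present in $G_{n,p'}$. There are at most $|A|\le k$ choices for $v_0$, at most $|B|\le k$ for $w_0$, and for the remaining $j-1$ internal vertices at most $n^{j-1}$ choices (the constraints $v_{d'}\in V_{d'}$ only restrict some of them), so $\mu \le k^2 n^{j-1}(p')^{j} = k^2 n^{j-1} p^j n^{j\epsilon}$. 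Using $(np)^{\ell-2}=p^{-1}$, i.e.\ $np = p^{-1/(\ell-2)}$, one checks $n^{j-1}p^j = (np)^{j-1}p = (np)^{j-1}(np)^{-(\ell-2)}\cdot\ldots$; more simply, $n^{j-1}p^j = (np)^{j}/n$, and since $d\le\ell-4$ and we want a bound like $k^2(np)^{j-3}$, the main term works out because the path has at least two endpoints tethered in $A$ and $B$. I would verify that $\mu \le k^2(np)^{j-3}n^{3\ell\epsilon}$, say; the exponent bookkeeping here (using $np\le k$, $p=n^{-1+1/(\ell-1)}$, $\epsilon$ small) is routine but must be done carefully.

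**Applying the Deletion Lemma and the union bound.** With $\mu$ in hand, I apply Lemma~\ref{lem:deletion_lemma} with $b = kn^{\epsilon}/(\ell^2)$ (so that summing over the $O(\ell^2)$ pairs $(j,d)$ the total deletion set has size $\le kn^{2\epsilon}$ with room to spare) and $k_{\mathrm{DL}} = \mu' := k^2(np)^{j-3}n^{4\ell\epsilon} - \mu$, which is $\ge \tfrac12 k^2(np)^{j-3}n^{4\ell\epsilon}$ since $\mu$ carries only an $n^{3\ell\epsilon}$ factor. Then $\cD\cL(b,k_{\mathrm{DL}},\cS)$ fails with probability at most $\exp\{-b k_{\mathrm{DL}}/(\mu+k_{\mathrm{DL}})\}\le \exp\{-\tfrac{1}{2}b\} = n^{-\omega(kn^{\epsilon/2})}$, say, because $k_{\mathrm{DL}}/(\mu+k_{\mathrm{DL}})\ge 1/2$. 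Taking the union over all $A,B$ (there are at most $n^{2k}$ such pairs) and over the $O(\ell^2)$ choices of $(j,d)$, the total failure probability is $n^{2k}\cdot O(\ell^2)\cdot n^{-\omega(k n^{\epsilon/2})} = o(1)$. On the complementary event, for each fixed $A,B$ and each $(j,d)$ we get a deletion set $\cI_0^{(j,d)}$ with at most $b$ members, each contributing at most $j\le\ell$ edges, so $F=\bigcup_{j,d}\bigcup_{\alpha\in\cI_0^{(j,d)}}\alpha$ has $|F|\le \ell^2\cdot b\cdot\ell\le kn^{2\epsilon}$, and any $(j,d)$-path edge-disjoint from $F$ is in particular edge-disjoint from $E_0^{(j,d)}$, hence counted among the at most $\mu+k_{\mathrm{DL}}=k^2(np)^{j-3}n^{4\ell\epsilon}$ survivors. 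This gives $\cQ_2$.

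**Main obstacle.** The one genuinely delicate point is the exponent arithmetic in bounding $\mu$: I must confirm that $k^2 n^{j-1}(p')^j$ really is at most $k^2(np)^{j-3}n^{O(\ell)\epsilon}$ for all $1\le j\le\ell-1$ and $0\le d\le\ell-4$ simultaneously, using $(np)^{\ell-2}=p^{-1}$, $np\le k$, and the smallness of $\epsilon$ — in particular the worst case $j$ and whether the $(np)^{j-3}$ (which is \emph{larger} than $(np)^{j-\ell+1}$ when $j$ is small, since $np\ge1$) is genuinely an upper bound on $n^{j-1}p^j/k^2$; here the factor $k^2$ on the right absorbs the two endpoint classes $A,B$ and the constraint $d\le\ell-4$ (rather than $\ell-3$) leaves enough slack. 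A secondary subtlety, analogous to the remarks after Lemma~\ref{lem:path:endpoints}, is that no "exceptional path" carve-out is needed here precisely because \emph{both} endpoints are pinned to the small sets $A$ and $B$, so there is no mechanism to blow up the count the way a single pinned endpoint could; this is why the clean Deletion Lemma argument suffices without extra case analysis.
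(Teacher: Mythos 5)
Your overall strategy is exactly the paper's: apply the Deletion Lemma to the family $\cS_{j,d}$ of edge sets of potential $(j,d)$-paths for each fixed $(A,B,j,d)$, take $b\approx kn^{\epsilon}$ so the union of the ignored edge sets has size at most $kn^{2\epsilon}$, and beat the $n^{2k}$ union bound over $A,B$ with the $\exp\{-bk/(\mu+k)\}$ failure probability. That part of the proposal is sound and matches the paper's proof step for step.

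However, the one step you yourself flag as "the genuinely delicate point" — the bound on $\mu$ — is carried out incorrectly. A $(j,d)$-path $w_0\cdots w_j=v_d\cdots v_0$ is the \emph{concatenation} of the two labelled paths meeting at the common vertex $w_j=v_d$ (see Figure~\ref{fig:jdpaths}): it has $j+d$ edges and $j+d-1$ internal vertices, not $j$ edges. The correct expectation is therefore
$\mu_{j,d}\le k^2\,n^{j+d-1}(pn^{\epsilon})^{j+d}=k^2(np)^{j+d-1}p\,n^{(j+d)\epsilon}=k^2(np)^{j+d+1-\ell}n^{(j+d)\epsilon}$,
and it is precisely the hypothesis $d\le\ell-4$ that yields $(np)^{j+d+1-\ell}\le(np)^{j-3}$, i.e.\ $\mu_{j,d}\le k^2(np)^{j-3}n^{2\ell\epsilon}$. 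In your computation $d$ never enters the formula ($k^2n^{j-1}(p')^j$ is the count for a path with $j$ edges, i.e.\ the case $d=0$ only), so for $d\ge1$ you are undercounting $\mu$ by a factor of roughly $(np')^{d}$; the "slack" you attribute to $d\le\ell-4$ is not actually used anywhere in your arithmetic. Since the Deletion Lemma's conclusion and its probability bound are both stated in terms of the true $\mu$ of the family, an invalid upper bound on $\mu$ breaks both the claim that $\mu+k_{\mathrm{DL}}$ is at most the target and the lower bound on $k_{\mathrm{DL}}/\mu$. The error is fixable — the corrected computation above gives exactly the bound you need, with the worst case at $d=\ell-4$ — but as written the key estimate does not hold up.
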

\begin{proof} 
Fix $A,B \subseteq [n]$ with $|A|,|B|\leq k$. 
Given $j$ and $d$, we denote by $\cS_{j,d}=\cS_{j,d}(A,B)$ the family of edge-sets of all possible $(j,d)$-paths wrt.\ $(A,B,A \cup B)$. 
Clearly, $|V_{d'}(A \cup B)| \leq n$ for all $1 \leq d' \leq d$. 
So, using $p = (np)^{-(\ell-2)}$, $j \leq \ell-1$ and $d \leq \ell-4$, the expected number $\mu_{j,d}$ of such  $(j,d)$-paths satisfies  
\[
\mu_{j,d} \leq k^2 n^{j+d-1}(pn^{\epsilon})^{d+j} \leq k^2 (np)^{j+d-1} p n^{2\ell\epsilon} = k^2 (np)^{j+d+1-\ell} n^{2\ell\epsilon} \leq k^2 (np)^{j-3} n^{2\ell\epsilon}   . 
\]
Set $\kappa_j = k^2 (np)^{j-3} n^{3\ell\epsilon}$ and $b=kn^{\epsilon}$. 
Using the Deletion Lemma (cf.\ Lemma~\ref{lem:deletion_lemma}) the probability that $\cD\cL(b,\kappa_j,\cS_{j,d})$ fails for some $1 \leq j \leq \ell-1$ and $0 \leq d \leq \ell-4$ is bounded by 
\[
\sum_{1 \leq j \leq \ell} \ \sum_{0 \leq d \leq \ell-4} (1+\kappa_j/\mu_{j,d})^{-b} \leq \ell^2 \cdot n^{-\ell \epsilon b} = n^{-\omega(k)}   , 
\]
with room to spare. 
Whenever $\cD\cL(b,\kappa_j,\cS_{j,d})$ holds, we denote by $F_{j,d}$ the corresponding `ignored' edge set $E_0$ as in Lemma~\ref{lem:deletion_lemma}. 
If all $\cD\cL(b,\kappa_j,\cS_{j,d})$ with $1 \leq j \leq \ell-1$ and  $0 \leq d \leq \ell-4$ hold simultaneously, then defining $F$ as the union of all edge sets $F_{j,d}$ has the required properties. 
Finally, taking the union bound over all choices of $A$ and $B$ completes the proof. 
\end{proof}

For most large sets $B$ and $W$, we expect that the number of $(b,w) \in B \times W$ for which there exists a path $b=w_0 \cdots w_{\ell-2}=w$ should be roughly $|B||W|n^{\ell-3}p'^{\ell-2} = |B||W|n^{(\ell-2)\epsilon}/(np)$. 
Loosely speaking, the next lemma suggests that for most reasonable sized $A,B \subseteq [n]$, this upper bound holds for $W=N^{(\ell-4)}(A,X)$ if we forbid certain exceptional paths, as in this case $|W| \approx |A| (np')^{\ell-4}$. 
\begin{lemma}
\label{lem:path:endpoints:pairs}
Let $\cP_2$ denote the event that for all disjoint $A,B \subseteq [n]$ with $|A|,|B|\leq k$ there exists $X \subseteq [n]$ and $F \subseteq \binom{[n]}{2}$ with $|X| \leq k n^{5\ell\epsilon}$ and $|F| \leq k n^{2\epsilon}$, such that the number of pairs $(b,w) \in B \times N^{(\ell-4)}(A, X)$ for which there exists a path $b=w_0 \cdots w_{\ell-2}=w$ with 
\begin{equation}
\label{eq:lem:path:endpoints:pairs:condition} 
w_1 \notin A \qquad \text{ and } \qquad \bigl(w_2 \not\in A \ \text{ or } \ \{w_0w_1,w_1w_2\} \cap F = \emptyset\bigr) 
\end{equation} 
is at most $k^2(np)^{\ell-5} n^{15 \ell\epsilon}$. 
Then $\cP_2$ holds whp in $G_{n,p'}$. 
\end{lemma}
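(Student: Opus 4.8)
The plan is to follow the template of Lemma~\ref{lem:path:endpoints}, but to route each relevant path through a $(j,d)$-path structure so that the deletion-lemma estimate $\cQ_2$ becomes applicable. First I would note that, by Lemmas~\ref{lem:set:neighbourhood:size}, \ref{lem:edges:bounded}, \ref{lem:path:endpoints:shortest} and~\ref{lem:path:endpoints:pairs:deletion}, the event $\cD \cap \cM \cap \cQ_1 \cap \cQ_2$ holds whp in $G_{n,p'}$; since $\cP_2$ is a purely deterministic consequence of $\cD \cap \cM \cap \cQ_1 \cap \cQ_2$, it then suffices to fix disjoint $A,B \subseteq [n]$ with $|A|,|B| \le k$ and to exhibit suitable $X$ and $F$. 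Mimicking Lemma~\ref{lem:path:endpoints}, I would take $V_{B,A}$ to be the set of endpoints of the at most $kn^{4\ell\epsilon}$ edges between $B$ and $N^{(\le \ell-3)}(A, A\cup B)$ provided by $\cM$, put $X = A \cup B \cup V_{B,A}$ (so that $|X| \le kn^{5\ell\epsilon}$), and let $F$ be the edge set with $|F| \le kn^{2\epsilon}$ furnished by $\cQ_2$ for the pair $(A,B)$.

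The structural starting point, exactly as in Lemma~\ref{lem:path:endpoints}, is that by the monotonicity~\eqref{eq:neighbourhood:monotone} and the choice of $X$ no vertex of $B$ has a neighbour in $N^{(\le \ell-4)}(A,X) \setminus A$; hence for any path $b = w_0 \cdots w_{\ell-2} = w$ with $b \in B$ and $w_1 \notin A$ one has $w_1 \notin N^{(\le \ell-4)}(A,X)$, so the first index $j^*$ with $w_{j^*} \in N^{(\le \ell-4)}(A,X)$ (which exists since $w_{\ell-2}=w\in N^{(\ell-4)}(A,X)$) satisfies $j^* \ge 2$. For a counted pair $(b,w)$, letting $d^*$ be the layer with $w_{j^*} \in N^{(d^*)}(A,X)$ and fixing a witness $a = v_0, v_1, \dots, v_{d^*} = w_{j^*}$ with $v_i \in N^{(i)}(A,X) \subseteq V_i(X)$ (preferring one avoiding $F$), the concatenation $b = w_0, \dots, w_{j^*} = v_{d^*}, v_{d^*-1}, \dots, v_0 = a$ is a simple $(j^*,d^*)$-path with respect to $(A,B,X)$, hence with respect to $(A,B,A\cup B)$ by~\eqref{eq:neighbourhood:monotone}. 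When this $(j^*,d^*)$-path is edge-disjoint from $F$ (note $j^*\le \ell-2\le \ell-1$ and $d^*\le \ell-4$), the bound $k^2(np)^{j^*-3}n^{4\ell\epsilon}$ of $\cQ_2$ applies, and multiplying by the at most $(npn^{2\epsilon})^{\ell-2-j^*}$ extensions of $P$ from $w_{j^*}$ to $w$ afforded by $\cD$ yields a total contribution of order $k^2(np)^{\ell-5}n^{O(\epsilon)}$. Here hypothesis~\eqref{eq:lem:path:endpoints:pairs:condition} is exactly what delivers $F$-disjointness in the delicate short case: if $w_2 \in A$ then $j^*=2$, $d^*=0$, the extracted $(2,0)$-path consists only of the edges $w_0w_1,w_1w_2$, and these avoid $F$ by~\eqref{eq:lem:path:endpoints:pairs:condition}. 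The remaining configurations — pairs with $j^* \ge 3$ and $w_{j^*}\in A$, pairs whose layer-witness cannot be chosen $F$-disjoint, pairs whose path uses an edge of $F$, and pairs whose concatenation fails to be simple — have to be estimated separately: the first using $|A| \le k$ together with $\cQ_1$ and $\cD$, and the rest using $|F| \le kn^{2\epsilon}$, the smallness of the sets $N^{(d)}(A,X)$ (controlled via $\cD$), and $\cM$, $\cQ_1$, $\cD$ to bound the neighbourhood structure. Finally one takes the union bound over all $A,B$ and over the finitely many values of $j^*$ and $d^*$; the slack between $n^{4\ell\epsilon}$ and the target exponent $n^{15\ell\epsilon}$ comfortably absorbs these factors.

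I expect the main obstacle to be precisely this last case analysis. The naive, degree-based estimates are off by a full factor of $np$ from the target, so in essentially every branch one is forced to use the full strength of the deletion-lemma bound $\cQ_2$ with the right choice of $(j,d)$, and to exploit carefully both clauses of~\eqref{eq:lem:path:endpoints:pairs:condition}: the $w_1 \notin A$ clause to guarantee $j^* \ge 2$ (which also keeps $\cQ_1$ applicable with its constraint $j \ge 2$), and the $w_2\notin A$ / $F$-avoidance clause so that whenever $\cQ_2$ is invoked the extracted $(j,d)$-path genuinely is edge-disjoint from $F$. Once the bookkeeping is organised so that each counted pair $(b,w)$ is charged either to an $F$-disjoint $(j,d)$-path with $j \le \ell-2$, $d\le \ell-4$ followed by a short unconstrained extension, or to one of the explicitly small exceptional families, the estimate $k^2(np)^{j-3}n^{4\ell\epsilon}$ of $\cQ_2$ together with the $(npn^{2\epsilon})^{\ell-2-j}$ extension count gives the claimed bound $k^2(np)^{\ell-5}n^{15\ell\epsilon}$ with room to spare, and $\cP_2$ follows.
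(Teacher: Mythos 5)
Your overall strategy matches the paper's: reduce to a deterministic claim on $\cD \cap \cM \cap \cQ_1 \cap \cQ_2$, take $F$ from $\cQ_2$, decompose each path by the first index $j^*$ at which it enters $N^{(\leq \ell-4)}(A,X)$, charge the main term to an $F$-disjoint $(j,d)$-path via $\cQ_2$ times a $\cD$-bounded extension, and treat the rest as exceptional families. The difficulty is that your choice $X = A \cup B \cup V_{B,A}$ (with $\cM$ applied only to $S=B$) is too small, and the exceptional families you defer to "separate estimates" are then genuinely too large for the tools you list. Concretely, consider pairs $(b,w)$ whose path satisfies $w_{j^*-1}w_{j^*} \in F$ with $w_{j^*} \in N^{(\leq \ell-4)}(A,X)\setminus A$ (so the second clause of \eqref{eq:lem:path:endpoints:pairs:condition} gives no $F$-avoidance because $w_2 \notin A$). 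The best available count is roughly $|F| \cdot (\text{paths back to } B) \cdot (npn^{2\epsilon})^{\ell-2-j^*}$, and already for $j^*=2$ this is of order $k^2 (np)^{\ell-4} n^{O(\epsilon)}$, exceeding the target $k^2(np)^{\ell-5}n^{15\ell\epsilon}$ by a factor of about $np$; neither $\cQ_1$ (which requires the path to avoid $N^{(\leq d)}$ until its last vertex) nor $\cQ_2$ (which only counts $F$-disjoint paths) applies here. The same problem afflicts the family "no $F$-disjoint layer-witness exists'': you cannot invoke $\cQ_2$ for those pairs, and the fallback degree count again loses a factor of $np$.

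The paper's proof avoids these families by making them \emph{empty} through a more careful construction of $X$: it first sets $V_F$ to be the non-$A$ endpoints of edges of $F$, then applies $\cM$ with $S \leftarrow B \cup V_F$ (not just $B$), and takes $X = A \cup B \cup V_F \cup V_{B,F}$. This yields the two structural properties \eqref{eq:lem:path:endpoints:pairs:properties:removal}: $V_F$ is disjoint from every vertex class $V_\kappa(X)$, so the $A$-side of any extracted $(j,d)$-path is automatically edge-disjoint from $F$ (no separate "bad witness'' family is needed), and $\Gamma(V_F \cup B)$ avoids $N^{(\leq \ell-4)}(A,X)\setminus A$, which kills outright the cases $\varsigma \in \{j-1,j\}$ of an $F$-edge sitting at or just before the entry point, as well as the cases $j^*=1$ and $w_2 \notin A$ with $j^*=2$. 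The only surviving $F$-intersecting configurations are those with the $F$-edge at position $\varsigma \leq j^*-2$, and for these $\cQ_1$ applies (started at the later endpoint of the $F$-edge) and the factor $|F| \leq kn^{2\epsilon}$ suffices. So the missing idea is not extra bookkeeping but the enlargement of $X$ by $V_F$ and the corresponding strengthened application of $\cM$; without it the argument does not close.
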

Before turning to the proof, note that $\cP_2$ is monotone decreasing. \marginal{Most technical proof of paper?}
\begin{proof}[Proof of Lemma~\ref{lem:path:endpoints:pairs}] 
By Lemmas~\ref{lem:set:neighbourhood:size}, \ref{lem:edges:bounded}, \ref{lem:path:endpoints:shortest} and~\ref{lem:path:endpoints:pairs:deletion} it is enough to show that $\cP_2$ holds for every graph $G$ satisfying $\cD \cap \cM \cap \cQ_1 \cap \cQ_2$. 
As this claim is purely deterministic, it suffices to prove it for fixed disjoint $A,B \subseteq [n]$ with $|A|,|B|\leq k$. 
Given $X \subseteq [n]$ and $F \subseteq \binom{[n]}{2}$, we denote by $P_{j,d}(X,F)$ the set of $(j,d)$-paths wrt.\ $(A,B,X)$ that are edge disjoint from $F$. 
By $\cQ_2$ there exists $F \subseteq \binom{[n]}{2}$ with $|F| \leq k n^{2\epsilon}$ such that for all $1 \leq j \leq \ell-2$ and $0 \leq d \leq \ell-4$ we have 
\begin{equation}
\label{eq:lem:path:endpoints:pairs:bound:jr:paths}
|P_{j,d}(A \cup B,F)| \leq k^2(np)^{j-3} n^{4 \ell\epsilon}   . 
\end{equation} 
Let $V_F$ contain all vertices outside $A$ that are endpoints of edges in $F$. 
Note that $|V_F| \leq 2 k n^{2\epsilon}$. 
Considering $S \leftarrow B \cup V_F$, by $\cM$ there are at most $kn^{4\ell\epsilon}$ edges between $B \cup V_F$ and $N^{(\leq \ell-3)}(A,B \cup V_F \cup A)$. 
Let $V_{B,F}$ contain the endpoints of all those edges and set 
\begin{equation}
\label{eq:lem:path:endpoints:pairs:def:X}
X=  A \cup B \cup V_F \cup V_{B,F}   . 
\end{equation} 
Observe that, say, $|X| \leq kn^{5\ell\epsilon}$. 
Furthermore, using \eqref{eq:neighbourhood:monotone} we see that 
\begin{equation}
\label{eq:lem:path:endpoints:pairs:properties:removal}
V_F \cap \bigcup_{1 \leq \kappa \leq \ell-4} V_{\kappa}(X) = \emptyset \qquad \text{ and } \qquad \Gamma\big(V_F \cup B\big) \cap \bigl(N^{(\leq \ell-4)}(A,X) \setminus A\bigr) = \emptyset   . 
\end{equation} 
For every $1 \leq j \leq \ell-2$ we define $W_j$ as the set of all pairs $(b,y) \in B \times N^{(\leq \ell-4)}(A,X)$ for which there exists a path $b=w_0 \cdots w_{j}=y$ satisfying \eqref{eq:lem:path:endpoints:pairs:condition} and  
\begin{equation}
\label{eq:lem:path:endpoints:pairs:condition:j}
\{w_{0},\ldots, w_{j-1}\} \cap N^{(\leq \ell-4)}(A,X) = \emptyset \qquad \text{ and } \qquad w_j \in N^{(\leq \ell-4)}(A,X)   . 
\end{equation}
We claim that in order to complete the proof, it suffices to show that for all $1 \leq j \leq \ell-2$ we have 
\begin{equation}
\label{eq:lem:path:endpoints:pairs:size:Wj}
|W_j| \leq k^2(np)^{j-3} n^{10\ell\epsilon}   . 
\end{equation}
Indeed, let $W$ contain all pairs $(b,w) \in B \times N^{(\ell-4)}(A,X)$ for which there exists a path $b=w_0 \cdots  w_{\ell-2}=w$ satisfying \eqref{eq:lem:path:endpoints:pairs:condition}. 
Note that for every such $b=w_0 \cdots w_{\ell-2}=w$ there exists $1 \leq j \leq \ell-2$ such that $b=w_0 \cdots w_{j}$ satisfies \eqref{eq:lem:path:endpoints:pairs:condition:j}. 
Recall that by $\cD$ the degree is bounded by $npn^{2\epsilon}$. 
So, given $w_j$, there are at most $(npn^{2\epsilon})^{\ell-j-2}$ vertices $w \in N^{(\ell-4)}(A,X)$ for which there exists a path $w_j \cdots w_{\ell-2}=w$.   
Putting things together, assuming \eqref{eq:lem:path:endpoints:pairs:size:Wj} we obtain  
\begin{equation*}
\label{eq:lem:path:endpoints:pairs:size:W}
|W| \leq \sum_{1 \leq j \leq \ell-2} |W_j| \cdot (npn^{2\epsilon})^{\ell-j-2} 
\leq k^2 (np)^{\ell-5} n^{15\ell\epsilon}   ,
\end{equation*}
and so $\cP_2$ holds, as claimed.

We shall now prove \eqref{eq:lem:path:endpoints:pairs:size:Wj}. 
Observe that for $j=1$ we need to consider paths $w_0w_1$ with $w_0 \in B$ and $w_1 \in N^{(\leq \ell-4)}(A,X) \setminus A$. 
Now, using the second part of \eqref{eq:lem:path:endpoints:pairs:properties:removal} we see that $w_1 \in \Gamma(w_0) \cap (N^{(\leq \ell-4)}(A,X) \setminus A)$ is impossible. 
This implies $|W_1|=0$, which clearly establishes  \eqref{eq:lem:path:endpoints:pairs:size:Wj} for $j=1$.

For $j \geq 2$ we first consider $W_{j,F} \subseteq W_j$, which contains all pairs $(b,y) \in W_j$ for which there exists a path $b=w_0 \cdots w_j=y$ satisfying \eqref{eq:lem:path:endpoints:pairs:condition:j} and 
\begin{equation}
\label{eq:lem:path:endpoints:pairs:condition:F:disjoint}
\{w_0w_1,\ldots, w_{j-1}w_{j}\} \cap F = \emptyset   . 
\end{equation}
Clearly, for every $(b,y) \in W_{j,F}$ there exists $0 \leq d \leq \ell-4$ such that at least one $(j,d)$-path wrt.\ $(A,B,X)$ with $b=w_0$ and $w_{j}=y$ satisfies \eqref{eq:lem:path:endpoints:pairs:condition:F:disjoint}. 
We claim that the corresponding $(j,d)$-path $w_0 \cdots w_j = v_d \cdots v_0$ is edge-disjoint from $F$, i.e., contained in $P_{j,d}(X,F)$. 
To see this, observe that every $f \in \{v_{d}v_{d-1}, \cdots, v_1v_0\} \cap F$ has at least one vertex outside of $A$, say $v_{\kappa} \in V_{\kappa}(X)$ with $1 \leq \kappa \leq d$, which contradicts \eqref{eq:lem:path:endpoints:pairs:properties:removal}, since by construction $v_{\kappa} \in V_F$. 
In addition, by \eqref{eq:neighbourhood:monotone} and \eqref{eq:lem:path:endpoints:pairs:def:X} we see that $P_{j,d}(X,F) \subseteq P_{j,d}(A \cup B,F)$. 
Putting things together, using \eqref{eq:lem:path:endpoints:pairs:bound:jr:paths} our discussion yields
\begin{equation}
\label{eq:lem:path:endpoints:pairs:bound:WjF:disjoint}
|W_{j,F}| \leq \sum_{0 \leq d \leq \ell-4} |P_{j,d}(X,F)| \leq \sum_{0 \leq d \leq \ell-4} |P_{j,d}(A \cup B,F)| \leq k^2(np)^{j-3} n^{5 \ell\epsilon}   . 
\end{equation}

It remains to estimate the number of pairs in $W^{*}_{j,F} = W_{j} \setminus W_{j,F}$, where the corresponding paths intersect with $F$. 
We start with the special case $j=2$, i.e., paths $b=w_0w_1w_2=y$ with $(b,y) \in W^{*}_{2,F}$ satisfying \eqref{eq:lem:path:endpoints:pairs:condition}. 
Observe that every $f \in \{w_0w_1,w_1w_2\} \cap F$ contains $w_1 \in V_F$, since  $w_1 \notin A$ by \eqref{eq:lem:path:endpoints:pairs:condition}. 
Note that $w_2 \in A$ contradicts the second part of \eqref{eq:lem:path:endpoints:pairs:condition}, and that $w_2 \in \Gamma(w_1) \cap (N^{(\leq \ell-4)}(A,X) \setminus A)$ is impossible by \eqref{eq:lem:path:endpoints:pairs:properties:removal}. 
To sum up, $|W^{*}_{2,F}|=0$, which together with \eqref{eq:lem:path:endpoints:pairs:bound:WjF:disjoint} implies \eqref{eq:lem:path:endpoints:pairs:size:Wj} for $j=2$.

Turning to $j \geq 3$, for every $1 \leq \varsigma \leq j$ we denote by $W^{*}_{j,F,\varsigma} \subseteq W^{*}_{j,F}$ the set of pairs $(b,y) \in W^{*}_{j,F}$ with $y \notin A$ where the corresponding path $b=w_0 \cdots w_j =y$ satisfies $w_{\varsigma-1}w_{\varsigma} \in F$ and \eqref{eq:lem:path:endpoints:pairs:condition:j}. 
We claim that it is enough to show that for every $1 \leq \varsigma \leq j$ we have 
\begin{equation}
\label{eq:lem:path:endpoints:pairs:bound:WjF:intersection:notinA}
|W^{*}_{j,F,\varsigma}| \leq k^2(np)^{j-3}n^{8\ell\epsilon}   . 
\end{equation} 
Indeed, since there are at most $|B| \cdot |A| \leq k^2 \leq k^2(np)^{j-3}$ pairs $(b,y) \in W^{*}_{j,F}$ with $y \in A$, we obtain  
\[
|W^{*}_{j,F}| \leq k^2(np)^{j-3} + \sum_{1 \leq \varsigma \leq j} |W^{*}_{j,F,\varsigma}|  \leq k^2(np)^{j-3}n^{9\ell\epsilon}   , 
\]
which together with \eqref{eq:lem:path:endpoints:pairs:bound:WjF:disjoint} establishes \eqref{eq:lem:path:endpoints:pairs:size:Wj}, as claimed.

In the following we verify \eqref{eq:lem:path:endpoints:pairs:bound:WjF:intersection:notinA}. 
First we show that $|W^{*}_{j,F,\varsigma}| = 0$ for $\varsigma \in \{j-1,j\}$. 
If $w_{j-1}w_{j} \in F$, then $w_j \notin A$ implies $w_j \in V_F$, but the remaining possibility $w_j \in N^{(\leq \ell-4)}(A,X) \setminus A$ contradicts \eqref{eq:lem:path:endpoints:pairs:properties:removal}. 
If $w_{j-2}w_{j-1} \in F$, then by \eqref{eq:lem:path:endpoints:pairs:condition:j} we have $w_{j-1} \notin N^{(\leq \ell-4)}(A,X)$ and so $w_{j-1} \in V_F$. 
Since by assumption $w_j \notin A$ we must have $w_j \in \Gamma(w_{j-1}) \cap (N^{(\leq \ell-4)}(A,X) \setminus A)$, which is impossible by \eqref{eq:lem:path:endpoints:pairs:properties:removal}.

Now, suppose that $w_{\varsigma-1}w_{\varsigma} \in F$ with $1 \leq \varsigma \leq j-2$.  
Considering $v \leftarrow w_{\varsigma}$ and $d \leftarrow \ell-4$, by $\cQ_1$ there are at most $(np)^{j-\varsigma-1}n^{6\ell\epsilon}$ vertices $w_j \in N^{(\leq \ell-4)}(A,X)$ for which there exists a path $w_{\varsigma}=w'_0 \cdots w'_{j-\varsigma}=w_j$ with $\{w_{\varsigma}, \ldots, w_{j-1}\} \cap N^{(\leq \ell-4)}(A,X) = \emptyset$. 
So, using $|F| \leq kn^{2\epsilon}$, since there are at most $|B|=k$ choices for $b \in B$, for $\varsigma \geq 2$ we deduce that  
\[
|W^{*}_{j,F,\varsigma}| \leq |B| \cdot 2|F| \cdot (np)^{j-\varsigma-1}n^{6\ell\epsilon} \leq k^2(np)^{j-\varsigma-1}n^{(6\ell+3)\epsilon} \leq k^2(np)^{j-3}n^{8\ell\epsilon}   , 
\]
as claimed. 
Note that for the remaining case $\varsigma=1$ each (ordered) edge $w_0w_1 \in F$ also determines the vertex $b=w_0 \in B$. So, compared to the estimate above we win a factor of $|B|$, and a virtually identical calculation yields that  \eqref{eq:lem:path:endpoints:pairs:bound:WjF:intersection:notinA} also holds in this case, which completes the proof. 
\end{proof} 
With very similar reasoning as for Lemma~\ref{lem:path:endpoints}, one can argue that an extra condition for the case $w_2 \in A$ is needed in Lemma~\ref{lem:path:endpoints:pairs}: this time we can otherwise violate the claimed bound whp by fixing some vertex $v^*$ and then choosing disjoint $A,B \subseteq [n]$ such that each contains at least $np$ vertices from $\Gamma(v^*)$; we leave the  details to the interested reader.

\section{Very good configurations exist} 
\label{sec:good_configurations_exist}
In this section we prove Lemma~\ref{lem:dem:config}. 
Given a graph property $\cY$, let $\cY_i$ denote the event $G(i) \in \cY$, i.e., that $G(i)$ satisfies $\cY$. 
Now, for every $0 \leq i \leq m$ we set 
\[
\cW_i = \cI_i \cap \cK_i \cap \cL_i \cap \cN_i \cap \cP_{1,i} \cap  \cP_{2,i} \cap \cT_i,
\]
where $\cK_i$, $\cL_i$, $\cT_i$ are defined as in Theorem~\ref{thm:BohmanKeevash2010H} and Lemma~\ref{lem:edges_bounded_and_large_degree_bounded}, and $\cI$, $\cN$, $\cP_{1}$, $\cP_{2}$ are defined as in Lemmas~\ref{lem:bounded_codegree}, \ref{lem:set:independent:subset}, \ref{lem:path:endpoints} and~\ref{lem:path:endpoints:pairs}. 
It is not difficult to see that $\cW_i$ is monotone decreasing and, using the `transfer theorem' (Theorem~\ref{thm:transfer:binomial}), that $\cW_m$ holds whp. 
Observe that by monotonicity $\cW_m$ implies $\cW_i$ for every $i \leq m$, and that $\neg\badE$ implies $\neg\badE[i-1]$. 
So, to complete the proof it suffices to consider fixed $G(i)$ satisfying $\cW_i$ and show that for every $(\tilde{v},U)$ with $U \in \binom{[n] \setminus \{\tilde{v}\}}{u}$ there exists $\Sigma^*=(\tilde{v},U,A,B,R) \in \cC$ satisfying $\neg\badEC$ and \eqref{eq:lem:dem:config:ignored}. 
In fact, since the above claim is purely deterministic, it is enough to also consider fixed $(\tilde{v},U)$. 
Our proof proceeds in several steps and we tacitly assume that $n$ is sufficiently large whenever necessary. 
First, in Section~\ref{sec:good_configurations_exist:find_config} we choose a `special' configuration $\Sigma^*=(\tilde{v},U,A,B,R)$ and collect some of its basic properties. 
In the remaining sections we verify that $\Sigma^*$ has the properties claimed by Lemma~\ref{lem:dem:config}. 
More precisely, in Section~\ref{sec:good_configuration} we show that $\neg\badEC$ holds, and in Section~\ref{sec:good_configuration:few_ignored} we establish \eqref{eq:lem:dem:config:ignored}.

\subsection{Finding $\Sigma^*=(\tilde{v},U,A,B,R)$} 
\label{sec:good_configurations_exist:find_config}
In the following we show how we pick $\Sigma^*=(\tilde{v},U,A,B,R)$. 
Along the way, we furthermore collect some immediate properties of the resulting $\Sigma^*$. 
We set 
\begin{equation}
\label{eq:Cl-parameters4}
\tau =  40\ell \qquad \text{ and } \qquad \vartheta = 20 \ell \tau = 800 \ell^2  . 
\end{equation} 
For the main steps of our argument it is useful to keep in mind that $\vartheta \gg \tau \gg \ell$ and $\vartheta\epsilon \ll 1/\ell$. 
First, we choose $S \subseteq U$ such that
\begin{equation}
\label{eq:size:S}
\text{$S$ is an independent set} \qquad \text{ and } \qquad |S| \geq u/6   , 
\end{equation} 
which is possible since $\cI_i$ holds. 
Henceforth we assume that $v_1, \ldots, v_n \in [n]$ are ordered so that 
\begin{equation}
\label{eq:vertex_ordering}
|\Gamma(v_1) \cap S| \geq |\Gamma(v_2) \cap S| \geq \cdots \geq  |\Gamma(v_j) \cap S| \geq \cdots \geq  |\Gamma(v_n) \cap S|   .
\end{equation}
We greedily choose first $\ell_A$, and afterwards $\ell_B$, such that they are the smallest indices for which 
\[
N_A = \bigcup_{1 \leq j \leq \ell_A}  \big(\Gamma(v_j) \cap S\big) \qquad \text{ and } \qquad   N_B  = \bigcup_{\ell_A < j \leq \ell_B}  \big(\Gamma(v_j) \cap S\big) \setminus N_A
\] 
each have cardinality at least $2k$, where we set the corresponding index to $\infty$ if this is not possible. 
Recall that $k=\gamma/60 \cdot np t_{\max}$ by \eqref{eq:Cl-parameters3} and $\gamma \geq 180$ by \eqref{eq:Cl-parameters2}.  
So, since $\cT_i$ holds, by \eqref{eq:degree-estimate} the maximum degree is at most $3 np t_{\max} \leq k$. 
Using $k=u/60$, we deduce that  
\begin{equation}
\label{eq:size_neighbourhoods}
|N_A \cup N_B| \leq 6k \leq u/10   .
\end{equation}

\subsubsection{Picking $A,B$} 
If $\ell_B=\infty$ or $\ell_B > n^{2\vartheta\epsilon}$, we choose arbitrary disjoint sets, each of size $k=u/60$, satisfying  
\[
A,B \subseteq S \setminus (N_A \cup N_B)   ,
\]
which is possible by \eqref{eq:size:S} and \eqref{eq:size_neighbourhoods}. 
For later usage, we furthermore set $I_A = \emptyset$ and $I_B = \emptyset$.

If $\ell_B \leq n^{2\vartheta\epsilon} = o(k)$, we set $I_A = \{v_1, \ldots, v_{\ell_A}\}$ and $I_B = \{v_{\ell_A+1}, \ldots, v_{\ell_B}\}$. 
Since $G(i)$ satisfies $\cN_i$, the codegrees are all bounded by nine, and thus 
\begin{equation}
\label{eq:size_common_neighbourhoods}
|\Gamma(I_B) \cap N_A| \leq |\Gamma(I_B) \cap \Gamma(I_A)| \leq 9 \cdot \ell_B \cdot \ell_A \leq 9 n^{4\vartheta\epsilon} = o(k)   .
\end{equation} 
Now we choose arbitrary sets, each of size $k$, satisfying 
\[
A \subseteq N_A \setminus \bigl(I_B \cup \Gamma(I_B)\bigr) \qquad \text{ and } \qquad B \subseteq N_B   ,
\]
which is possible by \eqref{eq:size_common_neighbourhoods}. 
Clearly, $A$ and $B \cup I_B$ are disjoint.

Next we estimate the size of certain neighbourhoods. A similar argument can be found in~\cite{Warnke2010K4}. 
\begin{lemma}
\label{lem:size:neighbourhoods}
We have 
$\Gamma(I_A) \cap B = \emptyset$ and $\Gamma(I_B) \cap A = \emptyset$. 
Given $Y \in \{A,B\}$, every $v \notin I_Y$ satisfies 
\begin{equation}
\label{eq:vnotinIAB:neighbourhood:bound}
|\Gamma(v) \cap Y| \leq np n^{-\vartheta\epsilon}   .
\end{equation}
\end{lemma}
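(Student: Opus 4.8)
We argue deterministically for the fixed graph $G(i)$ satisfying $\cW_i$ chosen in Section~\ref{sec:good_configurations_exist:find_config}, distinguishing the two cases of the construction: \emph{Case~1}, where $\ell_B=\infty$ or $\ell_B>n^{2\vartheta\epsilon}$ (so $I_A=I_B=\emptyset$ and $A,B\subseteq S\setminus(N_A\cup N_B)$), and \emph{Case~2}, where $\ell_B\leq n^{2\vartheta\epsilon}$ (so $I_A=\{v_1,\dots,v_{\ell_A}\}$, $I_B=\{v_{\ell_A+1},\dots,v_{\ell_B}\}$, $A\subseteq N_A\setminus(I_B\cup\Gamma(I_B))$ and $B\subseteq N_B$). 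The emptiness assertions are immediate: in Case~1 they are trivial since $I_A=I_B=\emptyset$; in Case~2, $A\subseteq N_A\setminus\Gamma(I_B)$ gives $\Gamma(I_B)\cap A=\emptyset$ at once, and if some $b\in B\subseteq N_B$ were adjacent to a vertex $v_{j'}$ with $j'\leq\ell_A$, then $b\in\Gamma(v_{j'})\cap S\subseteq N_A$, contradicting the fact that $N_B$ is disjoint from $N_A$ by definition, so $\Gamma(I_A)\cap B=\emptyset$.

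For \eqref{eq:vnotinIAB:neighbourhood:bound} in Case~2, the plan is to invoke the codegree property $\cN_i$ (recall $G(i)$ satisfies $\cW_i\subseteq\cN_i$). Take $Y=A$; the case $Y=B$ is symmetric. For $v=v_j\notin I_A$, i.e.\ $j>\ell_A$, write $N_A=\bigcup_{j'\leq\ell_A}(\Gamma(v_{j'})\cap S)$, so that every $s\in\Gamma(v_j)\cap N_A$ lies in $\Gamma(v_j)\cap\Gamma(v_{j'})$ for some $j'\leq\ell_A$; since the $v_{j'}$ are distinct from $v_j$, the codegree bound yields $|\Gamma(v)\cap A|\leq|\Gamma(v_j)\cap N_A|\leq 9\ell_A\leq 9\ell_B\leq 9n^{2\vartheta\epsilon}$. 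It then remains to note that $9n^{2\vartheta\epsilon}\leq np\,n^{-\vartheta\epsilon}$ for $n$ large, which holds because $3\vartheta\epsilon<1/(\ell-1)$ by the choice of $\vartheta,\epsilon$ in \eqref{eq:Cl-constants:Wepsmu} and \eqref{eq:Cl-parameters4}. For $Y=B$ the same computation with $N_B\subseteq\bigcup_{\ell_A<j'\leq\ell_B}(\Gamma(v_{j'})\cap S)$ and $v=v_j$ with $j\notin\{\ell_A+1,\dots,\ell_B\}$ gives $|\Gamma(v)\cap B|\leq 9(\ell_B-\ell_A)\leq 9n^{2\vartheta\epsilon}$.

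In Case~1 we split further according to $\ell_B$. If $\ell_B=\infty$, then $N_A\cup N_B=\bigcup_{j\in[n]}(\Gamma(v_j)\cap S)$ equals the set of non-isolated vertices of $S$, so $A,B\subseteq S\setminus(N_A\cup N_B)$ consist of isolated vertices and $\Gamma(v)\cap A=\Gamma(v)\cap B=\emptyset$ for every $v$. If $n^{2\vartheta\epsilon}<\ell_B<\infty$, then $N_A\cup N_B=\bigcup_{j\leq\ell_B}(\Gamma(v_j)\cap S)$, so $A$ and $B$, being subsets of $S$, are disjoint from $\Gamma(v_j)$ for every $j\leq\ell_B$; hence $\Gamma(v_j)\cap(A\cup B)=\emptyset$ for $j\leq\ell_B$. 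For $j>\ell_B$ I would bound $|\Gamma(v_j)\cap A|,|\Gamma(v_j)\cap B|\leq|\Gamma(v_j)\cap S|\leq|\Gamma(v_{\ell_B+1})\cap S|$ using the sorting \eqref{eq:vertex_ordering}, and then apply $\cL_i$ (from Lemma~\ref{lem:edges_bounded_and_large_degree_bounded}) with the set $S$, of size $|S|\leq u$ by \eqref{eq:size:S}, and threshold $d=np\,n^{-\vartheta\epsilon}$. One checks that $d\geq\max\{16\epsilon^{-1},2|S|pn^{2\epsilon}\}$ for $n$ large (again because $(2+\vartheta)\epsilon<1-1/(\ell-1)$), so that $\cL_i$ applies and gives $|D_{S,d}(i)|<16\epsilon^{-1}\gamma t_{\max}n^{\vartheta\epsilon}<n^{2\vartheta\epsilon}<\ell_B$. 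Since $D_{S,d}(i)=\{v_1,\dots,v_{|D_{S,d}(i)|}\}$ by \eqref{eq:vertex_ordering}, this forces $v_{\ell_B+1}\notin D_{S,d}(i)$, i.e.\ $|\Gamma(v_{\ell_B+1})\cap S|<d$, completing Case~1.

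The only genuinely non-obvious point is recognising that the two regimes need different tools: when $\ell_B$ is small the codegree estimate $\cN_i$ is strong enough (and is tight precisely because $\ell_B\leq n^{2\vartheta\epsilon}$), whereas when $\ell_B$ is large one instead exploits that, thanks to the sorting of the vertices, the greedy construction has already absorbed every vertex with at least $d$ neighbours in $S$, which is quantified by $\cL_i$. Everything else is the routine verification of numerical inequalities among $\epsilon,\vartheta,\ell,p,k,u,t_{\max}$, comfortably guaranteed by \eqref{eq:Cl-constants:Wepsmu}, \eqref{eq:Cl-parameters}, \eqref{eq:Cl-parameters2}, \eqref{eq:Cl-parameters3} and \eqref{eq:Cl-parameters4}.
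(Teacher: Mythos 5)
Your proof is correct, and its overall architecture (case split on $\ell_B$, trivial emptiness claims, codegree bound via $\cN_i$ when $\ell_B\leq n^{2\vartheta\epsilon}$, and a separate counting argument when $n^{2\vartheta\epsilon}<\ell_B<\infty$) matches the paper's. The one genuine divergence is in the sub-case $n^{2\vartheta\epsilon}<\ell_B<\infty$: the paper takes the initial segment $H=\{v_1,\ldots,v_x\}$ with $x=n^{2\vartheta\epsilon}$, uses the sorting \eqref{eq:vertex_ordering} to get $2e(H,S)\geq x\,|\Gamma(v_x)\cap S|$, and then invokes the edge-count property $\cK_i$ to bound $e(H,S)\leq npn^{2\epsilon}$, whence $|\Gamma(v_x)\cap S|\leq npn^{-\vartheta\epsilon}$ by averaging. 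You instead invoke $\cL_i$ with $d=npn^{-\vartheta\epsilon}$ to bound $|D_{S,d}(i)|<n^{2\vartheta\epsilon}<\ell_B$ and observe that, by the sorting, $D_{S,d}(i)$ is an initial segment, so every $v_j$ with $j>\ell_B$ has fewer than $d$ neighbours in $S$. Both tools are part of $\cW_i$, the numerical conditions for applying $\cL_i$ do check out as you indicate, and the two routes are of essentially equal length; the $\cK_i$ route has the minor aesthetic advantage of not needing the observation that $D_{S,d}(i)$ is an initial segment, while yours avoids the averaging step. Either is a complete proof.
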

\begin{proof}
If $\ell_B=\infty$, then all vertices $v \in [n]$ satisfy the stronger bound $|\Gamma(v) \cap (A \cup B)| = 0$.

Next, we consider the case $n^{2\vartheta \epsilon} < \ell_B < \infty$, where $I_A = I_B = \emptyset$.  
Since all vertices $v \in \{v_{1}, \ldots, v_{\ell_B}\}$ satisfy $|\Gamma(v) \cap (A \cup B)| = 0$, using \eqref{eq:vertex_ordering} it is not difficult to see that in order to prove \eqref{eq:vnotinIAB:neighbourhood:bound}, it suffices to show $|\Gamma(v_{x}) \cap S| \leq np n^{-\vartheta\epsilon}$ for $x = n^{2\vartheta \epsilon}$.  
Set $H = \{v_{1}, \ldots, v_{x}\}$. On the one hand, using \eqref{eq:vertex_ordering} we have $2 e(H,S) \geq x |\Gamma(v_{x}) \cap S|$. 
On the other hand, since $G(i)$ satisfies $\cK_i$, using $|H| = n^{2\vartheta \epsilon}$ and $|S|\leq npn^{\epsilon}$, we have, say, $e(H,S) \leq npn^{2\epsilon}$. 
So, we deduce $|\Gamma(v_{x}) \cap S| \leq np n^{-\vartheta\epsilon}$, as claimed.

Finally, suppose that $\ell_B \leq n^{2\vartheta \epsilon}$.
Observe that $\Gamma(I_A) \cap B = \emptyset$ and $\Gamma(I_B) \cap A = \emptyset$ hold by construction. 
Fix $Y \in \{A,B\}$. 
Since by $\cN_i$ all codegrees are at most nine, for  every $v \notin I_Y$ we have $|\Gamma(v) \cap Y| \leq |\Gamma(v) \cap \Gamma(I_Y)| \leq 9 \ell_B$, which readily establishes \eqref{eq:vnotinIAB:neighbourhood:bound}, and thus completes the proof. 
\end{proof}

\subsubsection{Choosing $R$} 
Observe that $|I_B| \leq n^{2\vartheta\epsilon}$. 
Considering $A$ and $S \leftarrow I_B$, we denote by $X_1$ the set $X$ whose existence is guaranteed by $\cP_{1,i}$.  
Similarly, let $X_2$ and $F$ denote the sets $X$ and $F$ whose existence is guaranteed by $\cP_{2,i}$ when considering $A$ and $B$. 
We have $|X_1|, |X_2| \leq k n^{5\ell\epsilon}$ and $|F| \leq k n^{2\epsilon}$. 
Now we set 
\begin{equation}
\label{eq:def:R} 
R = \{\tilde{v}\} \cup U \cup X_1 \cup X_2   .
\end{equation} 
Clearly, $|R| \leq k n^{10\ell\epsilon}$ holds, with room to spare. 
Next we collect several structural properties. 
By \eqref{eq:neighbourhood:monotone} and \eqref{eq:def:R} and have $N^{(j)}(A,R) \subseteq N^{(j)}(A,X_1) \cap N^{(j)}(A,X_2)$. 
So, using $(\Gamma(I_B) \cup I_B) \cap A = \emptyset$, we immediately obtain the following statement: 
\begin{lemma}
\label{lem:bound_paths_IB}
We have $|I_B| \leq n^{2\vartheta\epsilon}$, and for every $v \in I_B$ there are at most $(np)^{\ell-3} n^{15 \ell\epsilon}$ vertices $w \in N^{(\ell-3)}(A,R)$ for which there exists a path $v=w_0 \cdots w_{\ell-2}=w$. \qed 
\end{lemma}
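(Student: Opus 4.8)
The plan is to read both assertions off the construction of $\Sigma^* = (\tilde v, U, A, B, R)$ carried out in Section~\ref{sec:good_configurations_exist:find_config}, using the property $\cP_{1,i}$ (that is, Lemma~\ref{lem:path:endpoints}) for the path count.

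For the bound on $|I_B|$ I would split into the two cases used when picking $A$ and $B$. If $\ell_B = \infty$ or $\ell_B > n^{2\vartheta\epsilon}$ then $I_B = \emptyset$, so the bound is trivial and the remaining assertion of the lemma is vacuous. Otherwise $\ell_B \leq n^{2\vartheta\epsilon}$ and $I_B = \{v_{\ell_A+1}, \ldots, v_{\ell_B}\}$ has at most $\ell_B \leq n^{2\vartheta\epsilon}$ vertices.

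For the path count, I would first check that the hypotheses of Lemma~\ref{lem:path:endpoints} are met with $S \leftarrow I_B$: by construction $A$ and $I_B$ are disjoint (indeed $A \subseteq N_A \setminus (I_B \cup \Gamma(I_B))$ when $I_B \neq \emptyset$), we have $|A| = k$, and $|I_B| \leq n^{2\vartheta\epsilon} = o(k)$ by \eqref{eq:Cl-constants:Wepsmu}, \eqref{eq:Cl-parameters}, \eqref{eq:Cl-parameters3} and \eqref{eq:Cl-parameters4}. Hence $\cP_{1,i}$ furnishes precisely the set $X_1 \subseteq R$ of \eqref{eq:def:R}, with the guarantee that for every $v \in I_B$ at most $(np)^{\ell-3} n^{15\ell\epsilon}$ vertices $w \in N^{(\ell-3)}(A, X_1)$ admit a path $v = w_0 \cdots w_{\ell-2} = w$ with $w_1 \notin A$. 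Two small observations then finish the argument. First, $\Gamma(I_B) \cap A = \emptyset$ by construction, so for any $v \in I_B$ every path $v = w_0 \cdots$ automatically has $w_1 \in \Gamma(v) \subseteq \Gamma(I_B)$ disjoint from $A$; thus the restriction $w_1 \notin A$ in Lemma~\ref{lem:path:endpoints} is vacuous here and can be dropped. Second, since $X_1 \subseteq R$, the monotonicity relation \eqref{eq:neighbourhood:monotone} gives $N^{(\ell-3)}(A, R) \subseteq N^{(\ell-3)}(A, X_1)$, so the set of endpoints we must bound is contained in the one $\cP_{1,i}$ controls. Combining these two facts yields the stated bound.

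I do not expect a genuine obstacle here: the lemma is a bookkeeping step that repackages $\cP_{1,i}$ once $A$, $I_B$, $X_1$, $R$ have been chosen so that its hypotheses hold. The only points requiring a moment's care are the elementary estimate $n^{2\vartheta\epsilon} = o(k)$ (a direct consequence of the constant constraints, since $\vartheta\epsilon \ll 1/\ell$) and the remark that the $w_1 \notin A$ clause of Lemma~\ref{lem:path:endpoints} costs nothing because $\Gamma(I_B)$ avoids $A$.
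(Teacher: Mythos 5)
Your argument is correct and is essentially identical to the paper's: the paper derives the lemma immediately from the monotonicity relation $N^{(\ell-3)}(A,R) \subseteq N^{(\ell-3)}(A,X_1)$ (via \eqref{eq:neighbourhood:monotone} and \eqref{eq:def:R}) together with $(\Gamma(I_B) \cup I_B) \cap A = \emptyset$, which makes the clause $w_1 \notin A$ in $\cP_{1,i}$ automatic, exactly as you observe. Your additional bookkeeping (the case split giving $|I_B| \leq n^{2\vartheta\epsilon}$ and the verification that $|I_B| \leq k$) just makes explicit what the paper leaves implicit.
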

In addition, using that $A \cup B$ is an independent set, we readily deduce the following result: 
\begin{lemma}
\label{lem:bound_pairs:BW}
We have $|F| \leq k n^{2\epsilon}$, and there are at most $k^2(np)^{\ell-5} n^{15 \ell\epsilon}$ pairs $(b,w) \in B \times N^{(\ell-4)}(A,R)$ for which there exists a path $b=w_0 \cdots w_{\ell-2}=w$ satisfying $w_2 \not\in A$ or $\{w_0w_1,w_1w_2\} \cap F = \emptyset$. \qed 
\end{lemma}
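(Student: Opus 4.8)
The plan is to deduce both assertions directly from the property $\cP_{2,i}$ of Lemma~\ref{lem:path:endpoints:pairs}, which was invoked in Section~\ref{sec:good_configurations_exist:find_config} to pick the sets $X_2$ and $F$ for the disjoint pair $A,B$. First I would note that $|A|=|B|=k$ by construction, so $\cP_{2,i}$ does apply to this pair and in particular gives $|F| \leq kn^{2\epsilon}$, which is the first claim. For the second claim I would start from the observation that $R = \{\tilde v\} \cup U \cup X_1 \cup X_2$ by \eqref{eq:def:R}, so $X_2 \subseteq R$, whence \eqref{eq:neighbourhood:monotone} yields $N^{(\ell-4)}(A,R) \subseteq N^{(\ell-4)}(A,X_2)$. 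Hence every pair $(b,w) \in B \times N^{(\ell-4)}(A,R)$ admitting a path as in the statement is, a fortiori, a pair in $B \times N^{(\ell-4)}(A,X_2)$ admitting such a path.

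Next I would check that such a path automatically satisfies the extra hypothesis $w_1 \notin A$ present in condition \eqref{eq:lem:path:endpoints:pairs:condition} of $\cP_{2,i}$. Indeed, given a path $b = w_0 \cdots w_{\ell-2} = w$ with $b \in B$, the pair $w_0 w_1 = b w_1$ is an edge of $G(i)$; since $A \cup B$ is an independent set (both $A$ and $B$ being subsets of the independent set $S$ of \eqref{eq:size:S}, the point stressed in the statement), no edge incident to $b \in B$ can have its other endpoint in $A$, so $w_1 \notin A$. Therefore the path satisfies \eqref{eq:lem:path:endpoints:pairs:condition} in full, and applying $\cP_{2,i}$ bounds the number of such pairs $(b,w)$ by $k^2(np)^{\ell-5} n^{15\ell\epsilon}$, as desired.

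I do not expect a genuine obstacle here: the entire content is the independence of $A \cup B$, and everything else is bookkeeping. The one thing to be careful about is making sure $\cP_{2,i}$ is legitimately applicable to this pair $(A,B)$ — they are disjoint with $|A|,|B|\leq k$ (both in fact equal to $k$) — and that the sets $X_2$ and $F$ occurring in $R$ and in the present statement are exactly the ones furnished by $\cP_{2,i}$ for this pair, as fixed in Section~\ref{sec:good_configurations_exist:find_config}.
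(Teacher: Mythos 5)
Your proposal is correct and is exactly the argument the paper intends: the lemma is stated with a \qed precisely because, once one observes that $A\cup B$ is independent (both lie in $S$), the condition $w_1\notin A$ in \eqref{eq:lem:path:endpoints:pairs:condition} is automatic for any path starting at $b\in B$, and the claim then follows from $\cP_{2,i}$ together with $N^{(\ell-4)}(A,R)\subseteq N^{(\ell-4)}(A,X_2)$ via \eqref{eq:neighbourhood:monotone}. No gaps.
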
 
In the subsequent sections, the construction of $A$ and $B$ is irrelevant; all that we use is that $A$, $B$ are disjoint subsets of $U$ with size $k$,  and there are sets $F$, $I_A$, $I_B$, $R$ such that the conclusions of Lemmas~\ref{lem:size:neighbourhoods}--\ref{lem:bound_pairs:BW} hold in $G(i)$.

\subsection{The configuration $\Sigma^*$ is good} 
\label{sec:good_configuration} 
In this section we show that $\neg\badEC = \neg\badEC[1,i] \cap \neg\badEC[2,i]$ holds.

\subsubsection{The bad event $\badEC[1,i]$} 
\label{sec:good_configuration:bad:2} 
In order to prove that $\badEC[1,i]$ fails, using Lemma~\ref{lem:bound_pairs:BW} it suffices to show that there are at most $k^2(np)^{\ell-4}n^{-10\epsilon}$ paths $w_0 \cdots w_{\ell-2}$ with $(w_0,w_2) \in B \times A$ satisfying $w_0w_1 \in F$ or $w_1w_2 \in F$. 
Let $P_{\Sigma^*}$ denote all such paths. 
For every $w_0w_1 \in F \cap E(i)$ with $w_0 \in B$, using Lemma~\ref{lem:size:neighbourhoods} we see that $w_1 \notin I_A$, which by \eqref{eq:vnotinIAB:neighbourhood:bound} implies that there are at most $np n^{-\vartheta\epsilon}$ choices for $w_2 \in \Gamma(w_1) \cap A$. 
With a similar argument, for every $w_1w_2 \in F \cap E(i)$ with $w_2 \in A$ we have at most $np n^{-\vartheta\epsilon}$ choices for $w_0 \in \Gamma(w_1) \cap B$. 
Furthermore, since the degree is bounded by $npn^{\epsilon}$, given $w_2 \in A$ there are at most $(npn^{\epsilon})^{\ell-4}$ paths $w_2 \cdots w_{\ell-2}$. 
So, using $np \leq k$, $|F| \leq k n^{2\epsilon}$ and \eqref{eq:Cl-parameters4}, i.e., $\vartheta \geq 20 \ell$, we deduce that  
\[
|P_{\Sigma^*}| \leq np n^{-\vartheta\epsilon} \cdot 2|F| \cdot (npn^{\epsilon})^{\ell-4} \leq k^2 (np)^{\ell-4} n^{(\ell-\vartheta)\epsilon} < k^2 (np)^{\ell-4} n^{-10\epsilon}   ,
\] 
which, as explained, establishes $\neg\badEC[1,i]$.

\subsubsection{The bad event $\badEC[2,i]$} 
\label{sec:good_configuration:bad:1}  
In anticipation of the estimates in Section~\ref{sec:good_configuration:few_ignored}, here we analyse the combinatorial structure of $L_{\Sigma^*}(i)$ much more precisely than needed. 
To this end we introduce the sets $L_{\Sigma^*}(i,j)$, where for every $j \in [\ell-1]$ we denote by $L_{\Sigma^*}(i,j)$ the set of all \emph{ordered} pairs $xy$ with distinct $x,y \in [n]$ such that $|C_{x,y,\Sigma^*}(i,j)| \geq p^{-1}n^{-30\ell\epsilon}$.  
We start by showing that we may restrict our attention to the case $j \in \{1,2\}$. 
Recall that $C_{x,y,\Sigma^*}(i,j)$ contains all pairs $bw \in B \times N^{(\ell-3)}(A,R)$ for which there exist disjoint paths $b=w_1 \cdots w_{j}=x$ and $y=w_{j+1} \cdots w_{\ell}=w$ in $G(i)$.  
Fix $x \neq y$. 
Since the degree is at most $npn^{\epsilon}$ by \eqref{eq:degree-estimate}, for $j \geq 3$ the number of choices for $w$ is at most $(npn^{\epsilon})^{\ell-j-1} \leq (npn^{\epsilon})^{\ell-4}$. 
Now, as there are at most $|B| \leq k \leq npn^{\epsilon}$ ways to pick $b \in B$, using $(np)^{\ell-2}=p^{-1}$ we crudely have 
\begin{equation}
\label{eq:closed:xySigma:small:jl}
|C_{x,y,\Sigma^*}(i,j)| \leq npn^{\epsilon} \cdot (npn^{\epsilon})^{\ell-4} \leq p^{-1} n^{\ell\epsilon}/(np) < p^{-1} n^{-30\ell\epsilon}   ,
\end{equation}
which implies $xy \notin L_{\Sigma^*}(i,j)$. Therefore $L_{\Sigma^*}(i,j) = \emptyset$ for $j \geq 3$, so 
\begin{equation}
\label{eq:closed:xySigma:inequality}
|L_{\Sigma^*}(i)| \leq |L_{\Sigma^*}(i,1)| + |L_{\Sigma^*}(i,2)|   .
\end{equation}
With foresight, for all $j \geq 1$ we define $M^{(j)}(A)$ as the set of $v \in [n]$ with $|W^{(j)}(v,A)| \geq (np)^{j} n^{-\tau\epsilon}$, where $W^{(j)}(v,A)$ contains all vertices $w \in N^{(\ell-3)}(A,R)$ for which there exists a path $v=w_0 \cdots w_{j}=w$ in $G(i)$. 
Now we claim that 
\begin{equation}
\label{eq:closed:xySigma:i2:inclusion}
L_{\Sigma^*}(i,2) \subseteq \left\{xy \;:\; x \in I_B  \; \wedge \;  y \in M^{(\ell-3)}(A) \right\}   . 
\end{equation}
Note that $C_{x,y,\Sigma^*}(i,2)$ contains only pairs $bw \in B \times N^{(\ell-3)}(A,R)$ for which there exists paths $b=w_1w_{2}=x$ and $y=w_{3} \cdots w_{\ell}=w$ in $G(i)$. 
First suppose that $x \notin I_B$. 
Using Lemma~\ref{lem:size:neighbourhoods}, by \eqref{eq:vnotinIAB:neighbourhood:bound} we have at most $npn^{-\vartheta\epsilon}$ choices for $b \in \Gamma(x) \cap B$. 
Since the degree is at most $npn^{\epsilon}$, we have at most $(npn^{\epsilon})^{\ell-3}$ choices for $w$. 
So, using $(np)^{\ell-2}=p^{-1}$ and \eqref{eq:Cl-parameters4}, i.e., $\vartheta \geq 40 \ell$, we deduce  that 
\[
|C_{x,y,\Sigma^*}(i,2)| \leq npn^{-\vartheta\epsilon} \cdot (npn^{\epsilon})^{\ell-3} \leq p^{-1} n^{(\ell-\vartheta)\epsilon} < p^{-1} n^{-30\ell\epsilon}  ,
\]
which implies $xy \notin L_{\Sigma^*}(i,2)$. 
Next, we consider the case where $y \notin M^{(\ell-3)}(A)$. 
With a very similar reasoning as above, this time using $|W^{(\ell-3)}(y,A)| \leq (np)^{\ell-3} n^{-\tau\epsilon}$ and 
\eqref{eq:Cl-parameters4}, i.e., $\tau = 40 \ell$,  we obtain 
\[
|C_{x,y,\Sigma^*}(i,2)| \leq npn^{\epsilon} \cdot (np)^{\ell-3} n^{-\tau\epsilon} \leq p^{-1} n^{(1-\tau)\epsilon} < p^{-1} n^{-30\ell\epsilon}   ,
\]
which implies $xy \notin L_{\Sigma^*}(i,2)$. This completes the proof of \eqref{eq:closed:xySigma:i2:inclusion}.

By a similar but simpler argument we furthermore see that 
\begin{equation}
\label{eq:closed:xySigma:i1:inclusion}
L_{\Sigma^*}(i,1) \subseteq \left\{xy \;:\; x \in B  \; \wedge \;  y \in M^{(\ell-2)}(A) \right\}   . 
\end{equation}

Next we estimate the cardinality of $M^{(j)}(A)$. A similar argument is implicit in~\cite{BohmanKeevash2010H}. 
\begin{lemma}
\label{lem:bound:manypaths}
For every $1 \leq j \leq \ell-2$ we have $|M^{(j)}(A)| \leq (np)^{\ell-2-j}n^{2\ell\tau\epsilon}$. 
\end{lemma}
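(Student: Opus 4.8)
The plan is to reduce the statement to a counting problem about \emph{walks} and then peel off one edge at a time, invoking the event $\cL_i$ at each stage (this is legitimate since we are working with a fixed graph $G(i)$ satisfying $\cW_i$, hence in particular $\cL_i$ and $\cT_i$). Write $N=N^{(\ell-3)}(A,R)$. Since $\cT_i$ holds, \eqref{eq:degree-estimate} gives that every vertex of $G(i)$ has degree at most $npn^{\epsilon}$ (for $n$ large, as $t_{\max}=n^{o(1)}$), so $|N^{(t)}(A,R)|\le |A|(npn^{\epsilon})^{t}$ for all $t\ge 0$; in particular $|N|\le k(npn^{\epsilon})^{\ell-3}\le (np)^{\ell-2}n^{(\ell-2)\epsilon}$ using $k\le npn^{\epsilon}$. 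For $v\in[n]$ and $0\le j\le\ell-2$ let $g_j(v)$ be the number of walks $v=w_0w_1\cdots w_j$ of length $j$ in $G(i)$ with $w_j\in N$. Since every path is a walk, $|W^{(j)}(v,A)|\le g_j(v)$, hence $M^{(j)}(A)\subseteq\{v:g_j(v)\ge (np)^{j}n^{-\tau\epsilon}\}$, and it suffices to bound this set.

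I would prove, by induction on $j$, the statement $G(j)$: \emph{for every $\theta\ge\Theta_j:=(np)^{j}n^{-\tau_j\epsilon}$ we have $|\{v:g_j(v)\ge\theta\}|\le (np)^{\ell-2}n^{b_j\epsilon}/\theta$}, where $\tau_j:=\tau+2(\ell-j)$, $b_0:=\ell-2$ and $b_j:=b_{j-1}+\tau_j+j+2$. Note $\tau_j\ge\tau$, and a short computation using $\tau=40\ell$ from \eqref{eq:Cl-parameters4} gives $b_j+\tau\le 2\ell\tau$ for $0\le j\le\ell-2$. The base case $G(0)$ is immediate, since $g_0(v)=\mathbf{1}[v\in N]$: for $\theta>1$ the set is empty, and for $\Theta_0\le\theta\le 1$ it equals $N$, with $|N|\le (np)^{\ell-2}n^{(\ell-2)\epsilon}\le (np)^{\ell-2}n^{b_0\epsilon}/\theta$.

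For the inductive step fix $j\ge 1$, assume $G(j-1)$, and fix $\theta\ge\Theta_j$. Put $\phi:=\theta/(2npn^{\epsilon})$; using $\tau_{j-1}=\tau_j+2$ one checks $\phi\ge\Theta_{j-1}$. For $v$ with $g_j(v)\ge\theta$, split $\Gamma(v)$ according to whether $g_{j-1}(u)\ge\phi$ or not: the ``low'' neighbours contribute at most $npn^{\epsilon}\cdot\phi=\theta/2$ to $g_j(v)=\sum_{u\in\Gamma(v)}g_{j-1}(u)$, so the ``high'' ones contribute more than $\theta/2$; since $g_{j-1}(u)\le(npn^{\epsilon})^{j-1}$ for every $u$, this forces $|\Gamma(v)\cap M^{*}|>h$ with $M^{*}:=\{u:g_{j-1}(u)\ge\phi\}$ and $h:=\theta/(2(npn^{\epsilon})^{j-1})$. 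Hence $\{v:g_j(v)\ge\theta\}\subseteq D_{M^{*},h}(i)$. By $G(j-1)$, $|M^{*}|\le (np)^{\ell-2}n^{b_{j-1}\epsilon}/\phi=2(np)^{\ell-1}n^{(b_{j-1}+1)\epsilon}/\theta$, and since $(np)^{\ell-1}p=np$ this gives $2|M^{*}|pn^{2\epsilon}=O\big(np\,n^{(b_{j-1}+3)\epsilon}/\theta\big)$. Comparing this with $h$, and using $\theta\ge\Theta_j$, $np=n^{1/(\ell-1)}$, $\tau=40\ell$ and $\epsilon\le 1/(2^{15}\ell^{3})$ from \eqref{eq:Cl-constants:Wepsmu}, one obtains $h\ge\max\{16\epsilon^{-1},\,2|M^{*}|pn^{2\epsilon}\}$, so $\cL_i$ applies to $M^{*}$ and yields $|D_{M^{*},h}(i)|<16\epsilon^{-1}h^{-1}|M^{*}|$. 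Substituting the bounds for $h^{-1}$ and $|M^{*}|$, and using $\theta\ge\Theta_j$ together with $b_j=b_{j-1}+\tau_j+j+2$, gives $|\{v:g_j(v)\ge\theta\}|\le (np)^{\ell-2}n^{b_j\epsilon}/\theta$, i.e.\ $G(j)$. Applying $G(j)$ with $\theta=(np)^{j}n^{-\tau\epsilon}\ge\Theta_j$ finally yields $|M^{(j)}(A)|\le (np)^{\ell-2-j}n^{(b_j+\tau)\epsilon}\le (np)^{\ell-2-j}n^{2\ell\tau\epsilon}$, as required.

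The main obstacle is the bookkeeping of the auxiliary parameters: the loss of a factor $n^{\epsilon}$ from the degree bound forces the threshold $\phi$ to sit slightly below the ``natural'' value $(np)^{j-1}n^{-\tau\epsilon}$, which is precisely why $G(j)$ must be formulated for a whole range of $\theta$ rather than a single value, and one has to choose $\tau_j$ decreasing (and $b_j$ increasing slowly) so that the induction closes. The second delicate point is checking the applicability condition $h\ge 2|M^{*}|pn^{2\epsilon}$ of $\cL_i$, which requires the inductive bound on $|M^{*}|$ to be strong enough; both points reduce to the polynomial largeness of $np=n^{1/(\ell-1)}$ against $n^{O(\ell^{2})\epsilon}$ error factors, which is guaranteed by the choice of $\epsilon$ in \eqref{eq:Cl-constants:Wepsmu}.
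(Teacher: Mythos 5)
Your proof is correct and follows essentially the same route as the paper: both arguments peel off one edge at a time and apply the event $\cL_i$ at each of the $j$ levels. The only difference is organizational — the paper introduces explicit auxiliary sets $H^{(j)}(A)$ with a fixed threshold $npn^{-2\tau\epsilon}$ and runs two separate inductions (first the containment $M^{(j)}(A)\subseteq H^{(j)}(A)$, then the size bound on $H^{(j)}(A)$), whereas you run a single induction over the level sets of the walk-counting function at a floating threshold $\theta$; both devices serve the same purpose of absorbing the $n^{\epsilon}$ slack in the degree bound.
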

\begin{proof}
Set $H^{(0)}(A) = N^{(\ell-3)}(A,R)$, and for every $j \geq 1$ we let $H^{(j)}(A)$ contain all $v \in [n]$ with $|\Gamma(v) \cap H^{(j-1)}(A) | \geq np n^{-2\tau\epsilon}$. 
First, we claim that for all $1 \leq j \leq \ell-2$ we have
\begin{equation}
\label{eq:lem:bound:manypaths:HP:inclusion}
M^{(j)}(A) \subseteq H^{(j)}(A)   . 
\end{equation}
Since $\tau \geq 2\ell$ by \eqref{eq:Cl-parameters4}, it clearly suffices to show that for all $1 \leq j \leq \ell-2$, for every $v \notin H^{(j)}(A)$ we have $|W^{(j)}(v,A)| \leq j (npn^{\epsilon})^{j} n^{-2\tau\epsilon}$. 
We proceed by induction on $j$. For the base case $j=1$ the claim is trivial, since $H^{(1)}(A)$ contains all vertices $v \in [n]$ with $|\Gamma(v) \cap N^{(\ell-3)}(A,R)| \geq np n^{-2\tau\epsilon}$. 
Turning to $j \geq 2$, fix $v \notin H^{(j)}(A)$. 
By distinguishing between the neighbours of $v$ inside and outside of $H^{(j-1)}(A)$, using the induction hypothesis and that the degree is bounded by $npn^{\epsilon}$, we obtain  
\[
|W^{(j)}(v,A)| \leq np n^{-2\tau\epsilon} \cdot (npn^{\epsilon})^{j-1} + np n^{\epsilon} \cdot (j-1) (npn^{\epsilon})^{j-1} n^{-2\tau\epsilon} \leq j (npn^{\epsilon})^{j} n^{-2\tau\epsilon}   , 
\]
which, as explained, establishes \eqref{eq:lem:bound:manypaths:HP:inclusion}.

To finish the proof, again using $\tau \geq 2\ell$, it suffices to show that for all $0 \leq j \leq \ell-2$ we have 
\begin{equation}
\label{eq:lem:bound:manypaths:H:bound}
|H^{(j)}(A)| \leq (np)^{\ell-2-j}n^{(2j\tau+\ell+j)\epsilon}  . 
\end{equation} 
As before, we proceed by induction on $j$. 
Using $|A| \leq k \leq npn^{\epsilon}$ and that the degree is bounded by $npn^{\epsilon}$, we establish the base case $j=0$ by observing that $|H^{(0)}(A)| \leq |\Gamma^{(\ell-3)}(A)| \leq (npn^{\epsilon})^{\ell-2}$. 
Suppose $j \geq 1$. Recall that $(np)^{\ell-2} = p^{-1}$. 
Since $\cL_i$ holds, using the induction hypothesis we obtain 
\[
|H^{(j)}(A)| \leq 16\epsilon^{-1} (np)^{\ell-2-j}n^{(2j\tau+\ell+j-1)\epsilon} \leq (np)^{\ell-2-j}n^{(2j\tau+\ell+j)\epsilon}   ,
\]
completing the proof. 
\end{proof} 
With Lemma~\ref{lem:bound:manypaths} in hand, combing \eqref{eq:closed:xySigma:inequality}--\eqref{eq:closed:xySigma:i1:inclusion} with $|B| = k \leq npn^{\epsilon}$ as well as $|I_B| \leq n^{2\vartheta\epsilon}$, and then using \eqref{eq:Cl-constants:Wepsmu}, \eqref{eq:Cl-parameters4} as well as $\ell \geq 4$, $np=n^{1/(\ell-1)}$ and $(np)^2 \leq (np)^{\ell-2} = p^{-1}$, we deduce that  
\[
|L_{\Sigma^*}(i)| \leq npn^{\epsilon} \cdot n^{2\ell\tau\epsilon} + n^{2\vartheta\epsilon} \cdot npn^{2\ell\tau\epsilon}  \leq npn^{5\vartheta\epsilon} < (np)^{2} n^{-1/(2\ell)} \leq p^{-1} n^{-1/(2\ell)}   , 
\]
which establishes $\neg\badEC[2,i]$.

\subsection{Few tuples are ignored for $\Sigma^*$} 
\label{sec:good_configuration:few_ignored} 
In this section we estimate the size of $T_{\Sigma^*,\ell-3}(i) \setminus Z_{\Sigma^*,\ell-3}(i)$. 
Let $Q_{\Sigma^*}(i)$ contain all pairs $(w_1, w_\ell) \in B \times N^{(\ell-3)}(A,R)$ for which there exists a path $w_1 \cdots w_{\ell}$ with $w_2 \in I_B \cup M^{(\ell-2)}(A)$. 
We claim that 
\begin{equation}
\label{eq:bad:inequality}
|T_{\Sigma^*,\ell-3}(i) \setminus Z_{\Sigma^*,\ell-3}(i)| \leq |Q_{\Sigma^*}(i)|   . 
\end{equation} 
Every tuple $(v_0, \ldots, v_{\ell-2}) \in T_{\Sigma^*,\ell-3}(i) \setminus Z_{\Sigma^*,\ell-3}(i)$ was ignored in one of the first $i$ steps because (R2) failed. 
Recall that $C_{x,y,\Sigma^*}(i,j)$ contains all pairs $bw \in B \times N^{(\ell-3)}(A,R)$ for which there exist disjoint paths $b=w_1 \cdots w_{j}=x$ and $y=w_{j+1} \cdots w_{\ell}=w$ in $G(i)$. 
Observe that for every ignored tuple there exists $i' < i$, distinct $x,y \in [n]$ and $j \in [\ell-1]$ with $e_{i'+1} =xy$, $f_{\ell-2} \in C_{x,y,\Sigma}(i',j)$ and $|C_{x,y,\Sigma}(i',j)| > p^{-1} n^{-30\ell\epsilon}$. 
So, since $e_{i'+1} = xy$ was added, for every such tuple there exists a path $v_{\ell-2}=w_1 \cdots w_{j}w_{j+1} \cdots w_{\ell}=v_{\ell-3}$ with $w_{j}=x$ and $w_{j+1}=y$ in $G(i'+1) \subseteq G(i)$. 
Note that by monotonicity we have $C_{x,y,\Sigma^*}(i',j) \subseteq C_{x,y,\Sigma^*}(i,j)$, and therefore all such `bad' pairs $xy$ satisfy $|C_{x,y,\Sigma^*}(i,j)| > p^{-1} n^{-30\ell\epsilon}$. 
By the findings of Section~\ref{sec:good_configuration:bad:1} it thus suffices to consider $C_{x,y,\Sigma^*}(i,j)$ for $xy \in L_{\Sigma^*}(i,j)$ with $j \in \{1,2\}$, since for all others \eqref{eq:closed:xySigma:small:jl} holds. 
Now, using \eqref{eq:closed:xySigma:i2:inclusion} and \eqref{eq:closed:xySigma:i1:inclusion}, it is not difficult to see that the corresponding paths $v_{\ell-2}=w_1 \cdots w_{\ell}=v_{\ell-3}$ satisfy $w_1 \in B$, $w_2 \in I_B \cup M^{(\ell-2)}(A)$ and $w_{\ell} \in N^{(\ell-3)}(A,R)$. 
Putting things together,  the extension property $\cU_T$ (cf.\ Lemma~\ref{lemma:extension:property}) implies \eqref{eq:bad:inequality}, since every $(v_0, \ldots, v_{\ell-2}) \in T_{\Sigma^*,\ell-3}(i) \setminus Z_{\Sigma^*,\ell-3}(i)$ is uniquely determined by the pair $f_{\ell-2}=v_{\ell-3}v_{\ell-2}$.

Let $Q_{\Sigma^*,I}(i)$ and  $Q_{\Sigma^*,M}(i)$ contain all pairs $(w_1,w_\ell) \in Q_{\Sigma^*}(i)$ where at least one corresponding path $w_1 \cdots w_{\ell}$ satisfies $w_2 \in I_B$ and $w_2 \in M^{(\ell-2)}(A) \setminus I_B$, respectively. 
Now, using \eqref{eq:Cl-parameters3} and \eqref{eq:bad:inequality}, to establish \eqref{eq:lem:dem:config:ignored}, it  suffices to prove, say,
\begin{equation}
\label{eq:bad:paths:estimate}
\max\{|Q_{\Sigma^*,I}(i)|,|Q_{\Sigma^*,M}(i)|\} \leq (np)^{\ell-1}n^{-15\epsilon}   . 
\end{equation} 
Using Lemma~\ref{lem:bound_paths_IB}, $|I_B| \leq n^{2\vartheta\epsilon}$ and that the degree is at most $npn^{\epsilon}$, we obtain, with room to spare, 
\begin{equation*}
\label{eq:bad:paths:estimate:small:Q1}
|Q_{\Sigma^*,I}(i)| \leq npn^{\epsilon} \cdot |I_B| \cdot (np)^{\ell-3} n^{15 \ell\epsilon} \leq (np)^{\ell-2} n^{(15 \ell + 2\vartheta + 1)\epsilon} \leq (np)^{\ell-1}n^{-15\epsilon}   . 
\end{equation*} 
Turning to $Q_{\Sigma^*,M}(i)$, note that for every $w_2 \in M^{(\ell-2)}(A) \setminus I_B$ we have $|\Gamma(w_2) \cap B| \leq npn^{-\vartheta\epsilon}$ by \eqref{eq:vnotinIAB:neighbourhood:bound}. 
With a similar argument as above, using Lemma~\ref{lem:bound:manypaths}, i.e., $|M^{(\ell-2)}(A)| \leq n^{2\ell\tau\epsilon}$, we see that
\begin{equation*}
\label{eq:bad:paths:estimate:small:Q2}
|Q_{\Sigma^*,M}(i)| \leq npn^{-\vartheta\epsilon} \cdot |M^{(\ell-2)}(A)| \cdot (npn^{\epsilon})^{\ell-2} \leq (np)^{\ell-1} n^{(2\ell\tau+\ell-\vartheta)\epsilon} \leq (np)^{\ell-1}n^{-15\epsilon}  , 
\end{equation*} 
where the last inequality follows from \eqref{eq:Cl-parameters4}, i.e., $\vartheta = 20\ell\tau$. 
This establishes \eqref{eq:bad:paths:estimate}, which, as explained,  completes the proof of Lemma~\ref{lem:dem:config}. \qed

\bigskip{\bf Acknowledgements.} 
I would like to thank my supervisor Oliver Riordan for many stimulating discussions and helpful comments on an earlier version of this paper. 
Part of this research was done while visiting the University of Memphis, and I am grateful for the hospitality and great working conditions. 
Finally, I would also like to thank the anonymous referees for several suggestions improving the presentation of the paper.

\small\begin{spacing}{0.4}
\bibliographystyle{plain}

\end{spacing}

\end{document}